\theoremstyle{plain}
\newtheorem{thm}{Theorem}[section]
\newtheorem{cor}[thm]{Corollary}
\newtheorem{lem}[thm]{Lemma}
\newtheorem{prop}[thm]{Proposition}
\theoremstyle{definition}
\newtheorem{defi}[thm]{Definition}
\theoremstyle{remark}
\newtheorem{rem}[thm]{Remark}
\numberwithin{equation}{section}
\newcommand{\de}{\partial}
\newcommand{\R}{\mathbb{R}}
\newcommand{\N}{\mathbb{N}}
\newcommand{\eps}{\varepsilon}
\newcommand{\be}{\boldsymbol{e}}
\newcommand{\average}{{\mathchoice {\kern1ex\vcenter{\hrule height.4pt
width 6pt depth0pt} \kern-9.7pt} {\kern1ex\vcenter{\hrule
height.4pt width 4.3pt depth0pt} \kern-7pt} {} {} }}
\def\R{\mathbb{R}}
\DeclareMathOperator*{\argmin}{arg\,min}
\title[Optimal regularity fully nonlinear thin obstacle problem]{Optimal regularity for the fully nonlinear\\ thin obstacle problem}
\author{Maria Colombo}
\address{EPFL SB, Station 8, CH-1015 Lausanne, Switzerland}
\email{maria.colombo@epfl.ch}
\author{Xavier Fern\'andez-Real}
\address{EPFL SB, Station 8, CH-1015 Lausanne, Switzerland}
\email{xavier.fernandez-real@epfl.ch}
\author{Xavier Ros-Oton}
\address{ICREA, Pg. Llu\'is Companys 23, 08010 Barcelona, Spain \& Universitat de Barcelona, Departament de Matem\`atiques i Inform\`atica, Gran Via de les Corts Catalanes 585, 08007 Barcelona, Spain \& Centre de Recerca Matem\`atica, Edifici C, Campus Bellaterra, 08193 Bellaterra, Spain }
\email{xros@icrea.cat}
\keywords{Thin obstacle problem; Fully nonlinear; Signorini problem; Optimal regularity.}
\subjclass[2020]{35R35, 35J60, 35B65, 35B44.}
\thanks{M.C. and X.F. were supported by the Swiss National Science Foundation (SNF grants 200021\_182565 and PZ00P2\_208930) and by the Swiss State Secretariat for Education, Research and lnnovation (SERI) under contract number MB22.00034. X.F. and X.R. were also supported by the AEI project PID2021-125021NA-I00 (Spain). Finally, X.R. has been supported by the European Research Council (ERC) under the Grant Agreement No 801867,  by AGAUR Grant 2021 SGR 00087 (Catalunya), by MINECO Grant RED2022-134784-T (Spain), and by the SpanishAEI through the Mar\'ia de Maeztu Program for Centers and Units of Excellence in R{\&}D CEX2020-001084-M}
\begin{document}

\begin{abstract}
In this work we establish the optimal regularity for solutions to the fully nonlinear thin obstacle problem. In particular, we show the existence of an optimal exponent $\alpha_F$ such that $u$ is $C^{1,\alpha_F}$ on either side of the obstacle. 

In order to do that, we prove the uniqueness of blow-ups at regular points, as well as an expansion for the solution there. 

Finally, we also prove that if the operator is rotationally invariant, then $\alpha_F\ge \frac12$ and the solution is always $C^{1,\frac12}$. 
\end{abstract}

\maketitle 
\section{Introduction}

The aim of this work is to study the optimal regularity and homogeneity of blow-ups at regular points for solutions to the Signorini or thin obstacle problem for fully nonlinear operators.

Monotonicity formulas have been an essential tool in the study of free boundary problems (and minimal surfaces). They are extremely useful to control the solution at increasingly smaller scales and thus allow for a blow-up study to be performed. Nonetheless, monotonicity formulas usually lack flexibility in their application, and as soon as one considers small perturbations of the operators they are often no longer available. This is one of the reasons why, in the last years, the study of free boundary problems for fully nonlinear operators has arisen as a way to study the properties of solutions and contact sets in general contexts where no monotonicity formulas are available; see \cite{Lee98, MS08, Fer16, CRS17, RS17, LP18, DS19, Ind19, SY21, SY21b, WY21}.

In particular, the study of the optimal regularity and homogeneity of blow-ups for the classical Signorini problem is possible thanks to specific monotonicity formulas \cite{AC04, ACS08}. In the context of fully nonlinear operators, however, the lack of monotonicity formulas makes it harder to understand the optimal regularity of solutions.

\subsection{The setting}
Given a domain $D \subset \R^n$, the thin obstacle problem involves a function $u\in C(D)$, an obstacle $\varphi\in C(S)$ defined on a $(n-1)$-dimensional manifold~$S$, a Dirichlet boundary condition given by $g:\de D\to \R$, and an elliptic operator $L$,
\begin{equation}
  \label{eq.classicobst}
  \left\{ \begin{array}{rcll}
  L u&=&0& \textrm{ in } D \setminus \{x\in S : u(x) = \varphi(x)\}\\
  L u&\leq&0& \textrm{ in } D\\
  u&\geq&\varphi& \textrm{ on } S \\
  u&=&g& \textrm{ on }\de D. \\
  \end{array}\right.
\end{equation}

Intuitively, one can think of it as finding the shape of a membrane with prescribed boundary conditions considering that there is a thin obstacle from below.

The first results on the regularity for solutions to thin obstacle problems were established in the seventies by Lewy \cite{Lew68}, Frehse \cite{Fre77}, Caffarelli \cite{Caf79}, and Kinderlehrer \cite{Kin81}. In particular, when $L = \Delta$, Caffarelli showed in 1979 the $C^{1,\alpha}$ regularity of solutions in either side of the obstacle, for some $\alpha > 0$. 

However, while for the (thick) obstacle problem the optimal $C^{1,1}$ regularity of solutions is a simple consequence of maximum principle arguments, for the thin obstacle problem it took 25 years to obtain the optimal H\"older regularity. In 2004, Athanasopoulos and Caffarelli \cite{AC04} showed that solutions to the thin obstacle problem \eqref{eq.classicobst} with $L= \Delta$ are always $C^{1,1/2}$ in either side of the obstacle, and this is optimal. In order to do so, they had to rely on monotonicity formulas specific to the Laplacian, that do not have analogies  to other more general elliptic operators. We refer to \cite{PSU12} or \cite{Fer21} for more details on the thin obstacle problem and its monotonicity formulas.

In this work, we study the fully nonlinear version of problem \eqref{eq.classicobst}. More precisely, we study \eqref{eq.classicobst} with $Lu = F(D^2u)$, a convex fully nonlinear uniformly elliptic operator, and for simplicity we consider $S$ to be flat, and $\varphi = 0$. Since all of our estimates are of local character, we consider the problem in $B_1$,
\begin{equation}
  \label{eq.thinobst}
  \left\{ \begin{array}{rcll}
  F(D^2u)&=&0& \textrm{ in } B_1\setminus \{x_n = 0, u=0\}\\
  F(D^2u)&\leq&0& \textrm{ in } B_1\\
  u&\geq&0& \textrm{ on } B_1\cap\{x_n=0\},\\
  \end{array}\right.
\end{equation}
and we are interested in the study of the regularity of solutions on either side of the obstacle. As recently observed in \cite{FS17, SY21}, this is the problem that appears, for example, when studying the singular points for the (thick) obstacle problem. 

We denote by $\mathcal{M}_n$ the space of matrices $\R^{n\times n}$, and  by $\mathcal{M}^S_n$ the space of symmetric matrices. We assume that $F:\mathcal{M}_n\to \R$ satisfies that
\begin{align}
\label{eq.F}
& F \textrm{ is convex, uniformly elliptic}\\
&\nonumber \textrm{with ellipticity constants } 0 < \lambda \leq \Lambda, \textrm{ and with } F(0)= 0.
\end{align}

\subsection{Known results} A one-sided version of problem \eqref{eq.thinobst} was first studied by Milakis and Silvestre in \cite{MS08}, where the authors proved $C^{1,\eps_\circ}$ regularity of solutions under further assumptions on $F$ and the boundary datum. The second author then showed in \cite{Fer16} the $C^{1,\eps_\circ}$ regularity of solutions for the two-sided problem \eqref{eq.thinobst} with general operators \eqref{eq.F}. In particular, by the results in \cite{Fer16}, a solution $u$ to \eqref{eq.thinobst} satisfies
\[
\|u\|_{C^{1,\eps_\circ}(B_{1/2}^+)}+\|u\|_{C^{1,\eps_\circ}(B_{1/2}^-)}\le C \|u\|_{L^\infty(B_1)},
\]
for some $C$ and $\eps_\circ > 0$ depending only on $n$, $\lambda$, and $\Lambda$, and where we have denoted $B_r^\pm := B_r \cap \{\pm x_n > 0\}$. This is the analogue of the result of Caffarelli \cite{Caf79} for general convex fully nonlinear operators, leaving open the optimal regularity issue.

Later on, the third author and Serra in \cite{RS17} started the study of the contact set $\{x_n = 0, u = 0\}$ for solutions to \eqref{eq.thinobst}. In particular, they established the following dichotomy for any point $x_\circ$ on the free boundary, $x_\circ \in \partial \{u = 0\} \cap B_{1/2} \cap \{x_n= 0\}$ with $|\nabla u(x_\circ)| = 0$: 

There exists $\eps_\circ > 0$ depending only on $\lambda$ and $\Lambda$ such that
\begin{enumerate}
\item ({\it Regular points}) either 
\begin{equation}
\label{eq.reg_points}
c r^{2-\eps_\circ} \le \sup_{B_r(x_\circ)} \, u \le C r^{1+\eps_\circ}
\end{equation}
with $c > 0$, and all $r\in (0,1/4)$,
\item ({\it Degenerate points}) or 
\[
0\le \sup_{B_r(x_\circ)} \, u \le C_\eps r^{2-\eps}
\]
for all $\eps > 0$, and $r \in (0,1/4)$.
\end{enumerate}

This dichotomy is analogous to the case of the Laplacian in \cite{ACS08}, where $\eps_\circ = \frac12$ and free boundary points have frequency either $\mu = 3/2$  or $\mu \ge 2$. 

Moreover, continuing with the results in \cite{RS17},  points satisfying \eqref{eq.reg_points} were defined to be \emph{regular points}, and they are an open subset of the free boundary locally contained in a $C^1$ $(n-2)$-dimensional manifold. 
 More recently, by means of the recent boundary Harnack in \cite{DS20, RT21} one can actually upgrade the regularity of the regular part of the free boundary to $C^{1,\alpha}$ for some $\alpha > 0$.


In order to establish the classification of free boundary points, they proved that if a point is regular, then one can do a partial classification of blow-ups. More precisely, let us suppose that 0 is a regular free boundary point, and let us consider the rescalings 
\[
u_r(x) := \frac{u(rx)}{\|u\|_{L^\infty(B_r)}}.
\]
Then, from the results in \cite{RS17}, $u_r$ converges locally uniformly, up to a subsequence, to some $u_0$ global solution to a two-dimensional fully nonlinear thin obstacle problem, monotone in the direction parallel to the thin space. This information is enough to establish that the free boundary is Lipschitz (and $C^1$) around regular points, without needing a full classification or uniqueness of blow-ups $u_0$.

\subsection{Uniqueness of blow-ups} 
If $u$ is a solution to \eqref{eq.thinobst} and 0 is a regular free boundary point with $|\nabla u(0)| = 0$, then by \cite{RS17} for every sequence $r_k\downarrow 0 $ there is a subsequence $r_{k_j}\downarrow 0 $ such that 
\[
u_{r_{k_j}}(x) \longrightarrow u_0(x)\quad\text{in}\quad C_{\rm loc}^{1,\alpha}(\overline{\R^n_+})\cap C_{\rm loc}^{1,\alpha}(\overline{\R^n_-}),
\]
where $u_0(x) = U_0(x'\cdot e,x_n)$, $U_0\in C^{1,\alpha}(\overline{\R^2_+})\cap C^{1,\alpha}(\overline{\R^2_-})$, and $e\in \mathbb{S}^{n-2}$ is the unit outward normal vector to the contact set in the thin space.  We are denoting $x = (x', x_n)\in \R^{n-1}\times \R$, and $\R^n_\pm := \R^n \cap \{\pm x_n > 0\}$. Moreover (see Lemma~\ref{lem.global2D} below), $U_0$ is monotone in the first coordinate and satisfies a global thin obstacle problem in $\R^2$ with a homogeneous operator $G$,
\begin{equation}
\label{eq.global2D_intro}
\left\{
\begin{array}{rcll}
G(D^2 U_0) &\le& 0& \quad \text{in}\quad  \R^2\\
G(D^2 U_0) &=& 0& \quad \text{in} \quad  \R^2 \setminus (\{x_2 = 0\}\cap \{x_1 \le 0\})\\
U_0 &\ge& 0& \quad \text{on} \quad  \{x_2 = 0\}\\
U_0 &= & 0& \quad \text{on} \quad  \{x_2 = 0\}\cap \{x_1 \le 0\},
\end{array}
\right.
\end{equation}
satisfying
\begin{equation}
\label{eq.global2D_intro2}
U_0(0) = |\nabla U_0(0)| = 0, \qquad |U_0(x)|\le 1+|x|^{2-\eps}\quad\text{in}\quad \R^2.
\end{equation}

In particular, the operator $G$ in \eqref{eq.global2D_intro} is given by $G := F_{(e)} $, where $F_{(e)} :\R^{2\times 2}\to \R$ satisfies \eqref{eq.F} and is defined by 
\begin{equation}
\label{eq.givenby}
F_{(e)}(D^2 w) := F^*(D^2 w(x'\cdot e, x_n)),
\end{equation}
with $F^*$ the recession function or blow-down of $F$, i.e. $F^* = \lim_{t\to 0}tF(t^{-1}\,\cdot\,)$. When $F \neq \Delta$ and more in general when it is not rotationally invariant, the previous operators are expected to substantially depend on the choice of $e$.

The question that was left open in \cite{RS17} is: 
\[
\text{Are all solutions to \eqref{eq.global2D_intro}-\eqref{eq.global2D_intro2} homogeneous? Are they unique?}
\]

This type of question for fully nonlinear operators is usually very difficult. For example, the same question without obstacle in $\R^3$ is a very important open problem. It is known, for instance, that there are singular 2-homogeneous solutions for $n\ge 5$, \cite{NTV12, NV13}, but they do not exist in $\R^3$ or $\R^4$, \cite{NV13b}. Hence, the regularity for fully nonlinear equations is related to understanding whether blow-ups are always homogeneous, which remains an open problem due to the lack of monotonicity formulas. 

Even for Lipschitz and convex $F$ in $\R^2$, the homogeneity and classification of global solutions with sub-cubic growth is open. In this work we tackle these issues, in the context of the thin obstacle problem.

Our first result in this paper is the uniqueness of such blow-ups. Notice that, as a consequence, it follows that blow-ups are homogeneous. For the Signorini problem this is accomplished by means of a monotonicity formula; in our case we need to proceed differently. Notice, also, that in this context the monotonicity in one direction is equivalent to the subquadratic growth in \eqref{eq.global2D_intro2}.

\begin{thm}
\label{thm.uniq_blowup}
Let $G:\mathcal{M}_2\to \R$ be a 1-homogeneous fully nonlinear operator satisfying \eqref{eq.F}. Then, there exists a unique viscosity solution to \eqref{eq.global2D_intro}, $U_0\in C(\R^2)$, such that $\partial_1 U_0\in C(\R^2)$, $\partial_1 U_0\ge 0$, and $|\nabla U_0(0)| = 0$. Moreover, it is homogeneous of degree $1+{\alpha_G}\in (1, 2)$. 
\end{thm}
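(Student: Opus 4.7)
The natural approach is to exploit the 1-homogeneity of $G$ to reduce \eqref{eq.global2D_intro} to a nonlinear eigenvalue problem in the angular variable, prove existence and uniqueness of the principal profile, and then show that every global solution must coincide with it. In polar coordinates $x_1=r\cos\theta$, $x_2=r\sin\theta$, I look for solutions of the form $U_0(x)=r^{1+\alpha}\phi(\theta)$; a direct computation gives $D^2 U_0 = r^{\alpha-1}M(\theta;\alpha,\phi,\phi',\phi'')$ for an explicit matrix $M$, so by the 1-homogeneity of $G$ the equation $G(D^2 U_0)=0$ becomes a second-order ODE $G(M(\theta;\alpha,\phi,\phi',\phi''))=0$ on $(-\pi,0)\cup(0,\pi)$, with boundary conditions $\phi(\pm\pi)=0$ coming from the contact set and $C^1$-matching at $\theta=0$ from the interior regularity of $G(D^2 U_0)=0$ across $\{x_1>0,\,x_2=0\}$. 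The condition $\partial_1 U_0\ge 0$ together with $U_0\ge 0$ on $\{x_2=0\}$ selects a positive profile $\phi$; the subquadratic growth in \eqref{eq.global2D_intro2} forces $\alpha<1$, while $|\nabla U_0(0)|=0$ forces $\alpha>0$.

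Next, I would invoke the principal eigenvalue theory for uniformly elliptic fully nonlinear operators (in the spirit of Berestycki-Nirenberg-Varadhan and its fully nonlinear extensions by Birindelli-Demengel, Quaas-Sirakov, and Armstrong) to extract a unique $\alpha_G\in(0,1)$ and a positive eigenfunction $\phi_G$, unique up to positive scaling. Convexity, 1-homogeneity, and uniform ellipticity of $G$ together yield the strong maximum principle and the simplicity of the principal eigenvalue that make this step work.

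To upgrade uniqueness of the homogeneous profile to uniqueness among all (possibly non-homogeneous) solutions of \eqref{eq.global2D_intro}--\eqref{eq.global2D_intro2}, I would perform a blow-up and blow-down analysis. The $C^{1,\eps_\circ}$ estimates of \cite{Fer16} make the rescalings $U^\rho(x):=U_0(\rho x)/\|U_0\|_{L^\infty(B_\rho)}$ precompact; each subsequential limit as $\rho\to 0^+$ or $\rho\to\infty$ is again a global solution sharing the monotonicity and the normalization $\|\cdot\|_{L^\infty(B_1)}=1$, so by the previous step it must equal the unique homogeneous profile $c\,r^{1+\alpha_G}\phi_G(\theta)$ for a positive $c$. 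A dyadic iteration then shows that the ratio $\|U_0\|_{L^\infty(B_\rho)}/\rho^{1+\alpha_G}$ is constant in $\rho$, which promotes the convergence of the rescalings to the statement that $U_0$ itself is homogeneous.

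The main obstacle is this last step: without a monotonicity formula (the classical tool for the Signorini problem via Almgren's frequency function), one has to replace it by pure compactness combined with the strong comparison principle. Convexity of $G$ is essential here, since it makes convex combinations $(1-t)U_0+tV_0$ of two candidate solutions into subsolutions; applying the strong maximum principle to the difference $U_0-\lambda V_0$ for the largest $\lambda>0$ keeping it nonnegative then forces $U_0\equiv\lambda V_0$, giving the desired uniqueness.
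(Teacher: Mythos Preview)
Your proposal has two genuine gaps, and it misses the key idea that makes the paper's argument work.

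First, the angular profile $\phi$ is \emph{not} positive: the solution $U_0$ changes sign on $\mathbb{S}^1$, dividing it into three sectors (negative, positive, negative); see the paper's Lemma~\ref{lem.existence} and Figure~\ref{fig.2}. So the principal-eigenvalue machinery you invoke (which produces a positive eigenfunction) does not apply directly. The paper handles existence by building the three sign-definite pieces separately on the three cones via \cite{ASS12}, matching their homogeneities by a continuity argument, and gluing them $C^1$ across the interfaces.

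Second, and more seriously, your step ``blow-up and blow-down limits are the unique homogeneous profile, hence a dyadic iteration shows $\|U_0\|_{L^\infty(B_\rho)}/\rho^{1+\alpha_G}$ is constant'' is exactly the step that normally requires a monotonicity formula, and it does not follow from compactness alone. Knowing that every subsequential limit of the rescalings is the same homogeneous function does \emph{not} force $U_0$ itself to be homogeneous without additional input. Your final sliding argument (``largest $\lambda$ with $U_0-\lambda V_0\ge 0$'') is also applied to sign-changing functions, so nonnegativity for small $\lambda$ is not even clear, and you never establish the comparability near the slit that such an argument needs.

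The paper bypasses homogeneity entirely. The crucial observation (Lemma~\ref{lem.divergenceform}) is purely two-dimensional: if $u,v$ solve the same 1-homogeneous fully nonlinear equation, then for every $\alpha,\beta\ge 0$ the combination $\alpha\,\partial_1 u-\beta\,\partial_1 v$ satisfies a \emph{divergence-form} uniformly elliptic equation. Since $\partial_1 U_0$ and $\partial_1 V_0$ are nonnegative and vanish on the slit, the interior and boundary Harnack inequalities for divergence-form equations (Theorem~\ref{thm.boundaryHarnack}) make them globally comparable; the sliding argument is then run on $\partial_1 U_0-c\,\partial_1 V_0$ (nonnegative!) to get $\partial_1 U_0\equiv \partial_1 V_0$. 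Hence $U_0-V_0$ depends only on $x_2$, satisfies an equation with bounded measurable coefficients, and is therefore linear; the gradient condition at the origin kills it. Uniqueness is thus proved \emph{before} homogeneity, and homogeneity then follows because Lemma~\ref{lem.existence} exhibits one homogeneous solution.
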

\begin{rem}
The function $U_0$ divides $\mathbb{S}^1$ into three different sectors (see Figure~\ref{fig.2}), according to its sign. In particular, when $G = \Delta$, $U_0$ corresponds to the third eigenfunction on $\mathbb{S}^1\setminus \{(-1, 0)\}$, which has eigenvalue $\frac32$. For more general operators $G$ it does not make sense to refer to an eigenvalue problem, but still $U_0$ corresponds to the third homogeneity of homogeneous solutions to $G(D^2 u) = 0$ with zero Dirichlet condition on $(-1, 0)\times\{0\}$. 
\end{rem}

Hence, thanks to Theorem~\ref{thm.uniq_blowup}, for a $G$ of the form \eqref{eq.givenby}, we have a unique blow-up  $u_0^{(e)}$ at a regular point with normal to the free boundary $e\in \mathbb{S}^{n-2}$, with homogeneity $1+\alpha_F(e) := 1+\alpha_G = 1+\alpha_{F_{(e)}}$.

\begin{cor}
\label{cor.mainprop}
Let $u$ be a solution to \eqref{eq.thinobst} with $F$ satisfying \eqref{eq.F}, and let 0 be a regular (see Definition~\ref{defi.regular}) free boundary point such that $|\nabla u(0)| = 0$.
Let us denote by $e\in\mathbb{S}^{n-2}$ the unit outward normal to the contact set in the thin space at 0 and by $u_0^{(e)}$ the $(1+\alpha_{F}(e))$-homogeneous solution of Theorem~\ref{thm.uniq_blowup} applied to $F_{(e)}$ in \eqref{eq.givenby}. 

Then, we have
\[
u_r(x) := \frac{u(rx)}{\|u\|_{L^\infty(B_r)}} \longrightarrow u_0^{(e)}(x)\quad\text{in}\quad C^{1,\alpha}_{\rm loc}(\overline{\R^n_+})\cap C^{1,\alpha}_{\rm loc}( \overline{\R^n_-})\quad\text{as}\quad r\downarrow 0.
\]
\end{cor}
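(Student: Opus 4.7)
The plan is to upgrade the subsequential blow-up convergence from \cite{RS17} to convergence of the whole family $u_r$ as $r\downarrow 0$, using Theorem~\ref{thm.uniq_blowup} to remove the dependence on the subsequence. By the compactness and blow-up analysis of \cite{RS17} (see also Lemma~\ref{lem.global2D}), for every sequence $r_k\downarrow 0$ there exists a subsequence $r_{k_j}\downarrow 0$ such that $u_{r_{k_j}} \to u_0$ in $C^{1,\alpha}_{\rm loc}(\overline{\R^n_+})\cap C^{1,\alpha}_{\rm loc}(\overline{\R^n_-})$, with $u_0(x) = U_0(x'\cdot e', x_n)$ for some $e'\in \mathbb{S}^{n-2}$, where $\partial_1 U_0 \ge 0$, $|\nabla U_0(0)| = 0$, and $U_0$ is a global solution of the 2D thin obstacle problem \eqref{eq.global2D_intro}--\eqref{eq.global2D_intro2} with operator $G = F_{(e')}$ as in \eqref{eq.givenby}.

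The next step is to identify $e'$ with the geometric outward normal $e\in\mathbb{S}^{n-2}$ to the contact set in the thin space at $0$. Since $0$ is a regular free boundary point, by \cite{RS17} (combined with the boundary Harnack estimates in \cite{DS20, RT21}) the free boundary is a $C^{1,\alpha}$ manifold of dimension $n-2$ near $0$, and hence admits a well-defined outward unit normal $e$. Passing this regularity to the blow-up, the contact set $\{u_0 = 0\}\cap\{x_n=0\}$ must equal the half-hyperplane $\{x'\cdot e\le 0,\, x_n=0\}$; on the other hand, from the planar form $u_0(x)=U_0(x'\cdot e',x_n)$ and the description of the contact set of $U_0$ in \eqref{eq.global2D_intro}, this set is invariant under translations orthogonal to $e'$. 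The two descriptions are compatible only when $e'=e$.

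Once the direction is identified, Theorem~\ref{thm.uniq_blowup} applied to $G = F_{(e)}$, which is a $1$-homogeneous convex uniformly elliptic operator on $\mathcal{M}_2$ by \eqref{eq.givenby}, ensures that $U_0$ is unique and $(1+\alpha_F(e))$-homogeneous. Therefore $u_0 = u_0^{(e)}$ independently of the extracted subsequence, and a standard Urysohn-type argument upgrades subsequential convergence to full convergence $u_r\to u_0^{(e)}$ as $r\downarrow 0$.

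The only genuinely new ingredient beyond Theorem~\ref{thm.uniq_blowup} and the results of \cite{RS17} is the identification $e'=e$, which in turn relies on the already available $C^1$-regularity of the regular part of the free boundary. Once this is in hand, the corollary reduces to a direct plug-in of the uniqueness of blow-ups into the subsequential blow-up framework.
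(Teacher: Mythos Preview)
Your proposal is correct and follows essentially the same route as the paper: extract a subsequential limit via \cite{RS17} (Lemma~\ref{lem.global2D}), invoke Theorem~\ref{thm.uniq_blowup} (equivalently Proposition~\ref{prop.uniqueness} and Lemma~\ref{lem.existence}) to pin down the limit uniquely, and conclude full convergence by the usual subsequence argument. The paper's own proof is two sentences and leaves the identification of the direction $e$ implicit (it is already packaged into \cite[Proposition~5.2]{RS17} and the setup preceding Lemma~\ref{lem.global2D}), whereas you spell it out via the $C^{1,\alpha}$ regularity of the free boundary; this is a harmless expansion rather than a different approach.
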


Notice that, without loss of generality, after subtracting a linear function if necessary, we can always assume that if 0 is a free boundary point for a solution $u$ to a thin obstacle problem then $|\nabla u(0)| = 0$. 

\subsection{Optimal regularity} 
\label{sec.1homog}


As a consequence of Corollary~\ref{cor.mainprop}, since regular points are those with lowest decay, and $C^{2,\alpha}$ estimates hold outside of the contact set, we can guess the optimal regularity of solutions. 

Let us define, for $\alpha_{F}(e)$ given by Corollary~\ref{cor.mainprop},
\begin{equation}
\label{eq.alphan_opt}
(0,1)\ni {\alpha_F} := \min_{e\in \mathbb{S}^{n-2}} \alpha_{F}(e). 
\end{equation}
For instance, $\alpha_\Delta = \frac12$, since $\alpha_{\Delta}(e) = \frac12$ for all $e\in \mathbb{S}^{n-2}$. The value of ${\alpha_F}$ is the expected optimal regularity of solutions. Observe, though, that this is not an immediate consequence of the classification of blow-ups (contrary to the \emph{almost optimal} regularity in Proposition~\ref{prop.opt} below), and instead will require a very delicate blow-up analysis to obtain an expansion around regular points

In the case of the Laplacian, in \cite{AC04} optimal regularity is obtained without classifying blow-ups first, but by means of monotonicity formulas specific to that problem. Unfortunately, monotonicity formulas are not available for general fully nonlinear operators, so we have to develop other methods.

%
%

This is the main result of this paper, and it reads as follows: 

\begin{thm}
\label{thm.main_1homog}
Let $u$ be a solution to the thin obstacle problem \eqref{eq.thinobst} with $F$ satisfying \eqref{eq.F} and 1-homogeneous.
Let ${\alpha_F}$ be defined by \eqref{eq.alphan_opt} and Corollary~\ref{cor.mainprop}. 

Then, $u\in C^{1,{\alpha_F}}(B_1^+)\cap C^{1,{\alpha_F}}(B_1^-)$ and 
\[
\|u\|_{C^{1,{\alpha_F}}(B_{1/2}^+)}+\|u\|_{C^{1,{\alpha_F}}(B_{1/2}^-)} \le C \|u\|_{L^\infty(B_1)}
\]
where $C$ is a constant depending only on $F$. 

Furthermore, if $0\in \partial\{u = 0\}\cap \{x_n = 0\}$ is a regular free boundary point, and $e\in \mathbb{S}^{n-2}$ is the unit outward normal vector to the contact set at $0$, then we have the expansion 
\begin{equation}
\label{eqn:exp}
u(x) = c_n x_n + c_0 u_0^{(e)}(x) + o\left(|x|^{1+\alpha_{F}(e)+\sigma}\right)
\end{equation}
with $c_n \in \R, c_0 > 0$ and for some $\sigma > 0$ depending only on $F$, where $u_0^{(e)}$ and $\alpha_{F}(e)$ are given by Corollary~\ref{cor.mainprop}. 

In particular, if $|\nabla u(0)|  =0$, the growth in \eqref{eq.reg_points} becomes by \eqref{eqn:exp}, for some $c > 0$,
\[
c r^{1+\alpha_{F}(e)} \le \sup_{B_r(x_\circ)} \, u \le C r^{1+\alpha_{F}(e)} \qquad\mbox{for all } r\in (0,1/4).
\]
\end{thm}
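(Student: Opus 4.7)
The plan is to combine the rigidity supplied by Corollary~\ref{cor.mainprop} with a contradiction-compactness iteration in order to promote the qualitative blow-up convergence to the quantitative polynomial expansion~\eqref{eqn:exp}. Once \eqref{eqn:exp} is established, the $C^{1,\alpha_F}$ regularity and the two-sided growth bound follow by a covering argument, using the faster decay at degenerate points from \cite{RS17} together with interior $C^{2,\alpha}$ estimates for $F(D^2 u) = 0$ away from the contact set.

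To prove the expansion at a regular free boundary point (which, up to translation, I may assume to be the origin with $|\nabla u(0)| = 0$), I would introduce the ansatz space
\[
\mathcal{A} := \{\, a\, x_n + b\, u_0^{(e)}(x) : a \in \R,\; b \geq 0\,\}
\]
and the scale-invariant defect
\[
\theta(r) := \inf_{P \in \mathcal{A}} \frac{\|u - P\|_{L^\infty(B_r)}}{r^{\,1+\alpha_F(e)}}.
\]
Corollary~\ref{cor.mainprop} and the homogeneity of $u_0^{(e)}$ give $\theta(r) \to 0$ as $r \downarrow 0$, and the goal is to upgrade this to the quantitative decay $\theta(r) \leq C r^\sigma$ for some $\sigma > 0$ depending only on $F$, which is equivalent to~\eqref{eqn:exp}. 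I would argue by contradiction via a maximal-scale selection: if the bound fails for every choice of $C$ and $\sigma > 0$, one extracts $r_k \downarrow 0$ and $\eps$-optimal $P_k \in \mathcal{A}$ such that, setting $\lambda_k := \theta(r_k) \to 0$, the normalised differences
\[
v_k(x) := \frac{u(r_k x) - P_k(r_k x)}{\lambda_k \, r_k^{\,1+\alpha_F(e)}}
\]
satisfy $\|v_k\|_{L^\infty(B_1)} = 1$ together with the polynomial growth $\|v_k\|_{L^\infty(B_R)} \leq C\, R^{\,1+\alpha_F(e)+\sigma}$ for every $1 \leq R \leq 1/(4r_k)$.

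The heart of the argument is to extract a limit $v_k \to v_\infty$ and to classify it. Since $F$ is convex and $1$-homogeneous, $F(D^2 u) \leq 0$ and $F(D^2 P_k) = 0$ in the respective regions in which each is well-defined, so by convexity of $F$ the differences $v_k$ satisfy two-sided Pucci inequalities whose coefficients (obtained by a convex-combination linearisation of $F$) are expected to converge, as $\lambda_k \to 0$, to those of the linearisation of $F$ at $D^2(b_\infty u_0^{(e)})$. Coupled with the uniform $C^{1,\alpha}$ estimates of \cite{Fer16} and the convergence of the blow-up profiles from Corollary~\ref{cor.mainprop}, this would let one pass to the limit and obtain a global $v_\infty$ solving a linearised thin obstacle problem in $\R^n$ with axis $e$, with subquadratic growth and $\|v_\infty\|_{L^\infty(B_1)} = 1$. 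The main obstacle of the proof is then the Liouville-type rigidity: one must show that every such $v_\infty$ lies in $\mathcal{A}$. I would attack this by exploiting the uniqueness Theorem~\ref{thm.uniq_blowup} to produce a spectral-gap argument for the linearised problem, ruling out any admissible non-trivial homogeneity strictly between $1+\alpha_F(e)$ and $1+\alpha_F(e)+\sigma$. Once the rigidity is in place, $v_\infty \in \mathcal{A}$ contradicts the near-optimality of $P_k$, yielding~\eqref{eqn:exp}.

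Finally, the lower growth $cr^{\,1+\alpha_F(e)} \leq \sup_{B_r} u$ is an immediate consequence of~\eqref{eqn:exp} and the strict positivity $c_0 > 0$, the latter forced by combining the expansion with the regular-point lower bound in~\eqref{eq.reg_points}. For the $C^{1,\alpha_F}(B_{1/2}^\pm)$ estimate, I would cover the closed half-balls by three regimes: at regular free boundary points, \eqref{eqn:exp} together with $\alpha_F(e) \geq \alpha_F$ supplies a uniform pointwise $C^{1,\alpha_F}$ estimate; at degenerate free boundary points the bound $\sup_{B_r} u \leq C_\eps r^{\,2-\eps}$ from \cite{RS17} gives an even stronger pointwise control; and at points lying away from the contact set, $u$ satisfies $F(D^2 u) = 0$ with smooth Dirichlet data on an open piece of $\{x_n = 0\}$, so interior and boundary $C^{2,\alpha}$ estimates apply. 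A standard Campanato/scaling argument combining the three estimates then yields the uniform $C^{1,\alpha_F}$ norm bound with constant depending only on $F$.
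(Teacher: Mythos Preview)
Your overall architecture --- best approximation in the span of $x_n$ and $u_0^{(e)}$, contradiction--compactness on the normalised remainder, Liouville classification of the limit --- matches the paper's. The gap is precisely in the step you flag as ``the main obstacle'': the rigidity for $v_\infty$. The mechanism you propose (linearise $F$ at $D^2(b_\infty u_0^{(e)})$ and run a spectral-gap argument) does not work, and the paper takes a substantially different route there.

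First, the limit does \emph{not} satisfy a single linearised equation. Since $F$ is only convex, one cannot linearise; what one has is that for every $\beta\ge 0$ the combination $v_m-\beta u_0^{(k_m)}$ satisfies a uniformly elliptic equation in non-divergence form, with coefficients depending on $\beta$. After $H$-convergence one obtains a \emph{different} limiting operator for each $\beta$, so there is no fixed linear problem to which a spectral-gap argument applies. Second, even in the model case $F=\Delta$ a spectral gap alone is insufficient: the $\tfrac12$-homogeneous function $r^{1/2}\cos(\theta/2)$ solves the linear slit problem, grows slower than $r^{1+\alpha_F(e)}$, and is not in your ansatz space $\mathcal A$. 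To exclude it one must know that $\partial_e v_\infty$ is continuous at the origin, which requires $C^1$ (not merely $C^0$) convergence of $v_m$ in the tangential direction.

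The paper resolves this with three ingredients you are missing. (i) A boundary Harnack in slit domains applied to $\partial_e u_k/\partial_\nu u_k$ forces the limit $v_\infty$ to be two-dimensional. (ii) In two dimensions one exploits the special identity that derivatives of solutions to non-divergence equations satisfy a \emph{divergence-form} equation (cf.\ \cite[\S12.2]{GT83}); restricting \eqref{eq.maineq} to the $(e,\be_n)$-plane yields ${\rm div}(\tilde A\nabla\partial_1\tilde v_m)=\partial_1\tilde f_m$ with $\tilde f_m\in L^p$, $p>2$, uniformly. A bespoke regularity estimate for such equations in slit domains (Lemma~\ref{lem.weird_reg_est}) then gives the needed uniform $C^\delta$ bound on $\partial_1\tilde v_m$. (iii) Finally, since $\partial_1\tilde v_\infty-\beta\partial_1\tilde u_0^{(\infty)}$ satisfies a divergence-form equation for every $\beta$, the boundary-Harnack comparison of Theorem~\ref{thm.boundaryHarnack} (exactly as in Proposition~\ref{prop.uniqueness}) forces $\partial_1\tilde v_\infty=\bar\beta\,\partial_1\tilde u_0^{(\infty)}$, whence $v_\infty\in\mathcal A$. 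Your covering argument for the global $C^{1,\alpha_F}$ estimate is essentially correct, though the paper packages it via Lemmas~\ref{lem.first}--\ref{lem.second} and Theorem~\ref{thm.opt5}.
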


Even in the linear case $F = \Delta$, where one can obtain the optimal regularity through easier arguments, our proof is new and it is the first one that does not use monotonicity formulas. 
%
%
%
%

\subsection{The rotationally invariant case
}
\label{sec.pucci}
In Corollary~\ref{cor.mainprop} we have observed that blow-ups at regular points and their homogeneities depend on the orientation of the free boundary around them. Thus, expansions like the ones in Theorem~\ref{thm.main_1homog} and the corresponding optimal regularity given by ${\alpha_F}$, in practice depend in a fine way on the set of directions that the free boundary can take for a given solution. This type of anisotropy does not appear if one considers, instead, rotationally invariant operators, in which all directions $e\in \mathbb{S}^{n-2}$ will have the same $\alpha_{F} (e)$. 

Therefore, let us consider \emph{Hessian equations} (see, e.g., \cite{Tru95, Tru97}), that can be equivalently defined as those  rotationally invariant or those for which $F(M)$ depends only on the eigenvalues of $M$. 
As a consequence, if $F$ is rotationally invariant, the blow-up $u_0^{(e)}$ and $\alpha_{F}(e)$ as defined in Corollary~\ref{cor.mainprop} are independent of $e$.

In this case,  solutions to fully nonlinear thin obstacle problems with a rotationally invariant operator are always $C^{1,1/2}$, with equality  $\alpha_F=1/2$ if and only if $\{ F \le 0\} = \{{\rm tr}\,(\cdot)\le 0\}$ (that is, the solution satisfies the Signorini problem with the Laplacian as well):

\begin{thm}
\label{thm.rot_inv}
Let $F$ satisfying \eqref{eq.F} be rotationally invariant, and let ${\alpha_F}$ be defined in \eqref{eq.alphan_opt}. Then 
\[
{\alpha_F} \ge \textstyle{\frac12},
\]
with equality if and only if $\{F \le 0 \} = \{{\rm tr}\, (\cdot) \le 0\}$. 

Moreover, if $u$ is a solution to the thin obstacle problem \eqref{eq.thinobst} with operator $F$, then $u \in C^{1,1/2}(B_1^+)\cap C^{1,1/2}(B_1^-)$ and 
\[
\|u\|_{C^{1, 1/2} (B_{1/2}^+)} +\|u\|_{C^{1, 1/2} (B_1^-)}\le C \|u\|_{L^\infty(B_1)}
\]
for some $C$ depending only on $F$. 
\end{thm}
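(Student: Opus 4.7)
The $C^{1,1/2}$ regularity and estimate follow from Theorem~\ref{thm.main_1homog} once $\alpha_F\ge 1/2$ is established, so the heart of the argument is the lower bound on $\alpha_F$ together with the equality characterization. By the rotational invariance of $F$ (inherited by its recession $F^*$), the exponent $\alpha_F(e)$ is independent of $e\in\mathbb{S}^{n-2}$ and equals $\alpha_G$ for the $2$D, $1$-homogeneous, convex, rotationally invariant operator $G:=F^*_{(e_1)}$ with $G(0)=0$. By the uniqueness in Theorem~\ref{thm.uniq_blowup} combined with the reflection symmetry in $x_2$, the unique blow-up $U_0$ is even in $x_2$, hence satisfies $\partial_2 U_0=0$ on $\{x_2=0,\,x_1>0\}$.

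The plan is to compare $U_0$ with a suitable $3/2$-homogeneous $G$-supersolution $Q=r^{3/2}\psi(\theta)$. The main computation is that the classical Signorini profile $P(r,\theta)=r^{3/2}\cos(3\theta/2)$ is a $G$-\emph{subsolution}: since $D^2 P$ is traceless with eigenvalues $\pm|\mu|$, the $1$-homogeneity and eigenvalue symmetry of $G$ yield $G(\mu,-\mu)=|\mu|\,G(1,-1)$, and convexity with $G(0)=0$ gives
\[
2G(1,-1)=G(1,-1)+G(-1,1)\ge 2G(0)=0,
\]
so $G(D^2 P)\ge 0$, with equality throughout iff $\{G\le 0\}=\{\mathrm{tr}\le 0\}$. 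One then builds $Q$ by perturbing the angular profile $\psi_0=\cos(3\theta/2)$ to $\psi=\psi_0+\epsilon\eta$: the correction $\eta$, even with $\eta'(0)=0$ and $\eta(\pi)=0$, is obtained by solving a linear ODE coming from the linearization of $G$ at $D^2 P$, tuned so that at first order $G(D^2 Q)$ is pushed strictly below zero; the convexity of $G$ preserves this sign at higher order in $\epsilon$. Comparison in the slit domain $B_1\setminus\{x_2=0,\,x_1\le 0\}$ — equivalent, after even reflection across the non-slit part of the thin space, to a standard Dirichlet problem for $G$ — then yields $U_0\le C\,Q$ in $B_1$ after normalization on $\partial B_1$. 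Evaluating at $\rho\, e_1$ and using the homogeneities,
\[
\rho^{1+\alpha_G}U_0(e_1)\le C\,\rho^{3/2}\psi(0)\qquad \text{for every }\rho\in(0,1),
\]
forces $1+\alpha_G\ge 3/2$, i.e.\ $\alpha_F\ge 1/2$. When $\{F\le 0\}=\{\mathrm{tr}\le 0\}$ one has $G(1,-1)=0$, $P$ is itself a $G$-solution, $U_0=cP$, and $\alpha_F=1/2$; otherwise $G(D^2 P)$ is strictly positive somewhere, the perturbation in $Q$ is genuine, and the strict comparison yields $\alpha_F>1/2$.

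The main obstacle is the construction of the $3/2$-homogeneous supersolution $Q$: one must verify solvability of the linear ODE for $\eta$ under the mixed Dirichlet/Neumann conditions on $[0,\pi]$, check that the perturbed $\psi$ retains the correct sign structure so the Signorini boundary conditions remain compatible, and ensure that $G(D^2 Q)\le 0$ holds across all angles — in particular across the zeros of $\cos(3\theta/2)$ at $\theta=\pi/3,\pi$, where the linearization $\nabla G(D^2 P)$ may degenerate and a more careful one-sided analysis is required.
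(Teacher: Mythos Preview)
Your approach has a genuine gap in the central step, and a secondary citation error.

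\textbf{The main gap: the supersolution $Q$ cannot be built as described.} Your computation that $G(D^2 P)\ge 0$ is correct and elegant: since $D^2P$ has eigenvalues $\pm\mu$ with $\mu>0$ (being traceless), rotational invariance and $1$-homogeneity give $G(D^2P)=\mu\,g(1,-1)$, and convexity with symmetry forces $g(1,-1)\ge 0$. But in the non-trace case $g(1,-1)>0$ \emph{strictly}, so $G(D^2 P)=\mu\,g(1,-1)$ is a fixed strictly positive quantity, of order $r^{-1/2}$. A perturbation $Q=r^{3/2}(\psi_0+\epsilon\eta)$ changes $G(D^2 Q)$ from $G(D^2 P)$ by $O(\epsilon)$; for small $\epsilon$ this cannot overcome the zeroth-order positivity and drive $G(D^2 Q)\le 0$. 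Your phrase ``at first order $G(D^2 Q)$ is pushed strictly below zero'' presupposes that the zeroth-order term vanishes --- it does not, unless already $g(1,-1)=0$. So $P$ is a strict \emph{sub}solution and you cannot perturb it into a \emph{super}solution. (Using $P$ itself as a subsolution gives, via comparison, the wrong inequality $\alpha_G\le 1/2$.) You flag this construction as ``the main obstacle'' at the end; it is in fact a fatal one for the method as written. Note also that even if such a $3/2$-homogeneous supersolution $Q$ existed, the comparison $\rho^{1+\alpha_G}\le C\rho^{3/2}$ would only yield $\alpha_G\ge 1/2$, not the strict inequality you claim in the non-trace case.

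\textbf{The citation error.} You invoke Theorem~\ref{thm.main_1homog} for the $C^{1,1/2}$ estimate, but that theorem requires $F$ to be $1$-homogeneous, which Theorem~\ref{thm.rot_inv} does not assume. The paper instead uses Proposition~\ref{prop.opt} (the almost-optimal $C^{1,\alpha_F-\varepsilon}$ regularity, valid for general $F$): since in the non-trace case one proves $\alpha_F>1/2$ \emph{strictly}, one may take $\varepsilon=\alpha_F-\tfrac12>0$. In the trace case $\{F\le 0\}=\{\mathrm{tr}\le 0\}$, solutions to the $F$-problem are solutions to the Signorini problem for the Laplacian, and one quotes \cite{AC04} directly.

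\textbf{What the paper does instead.} The paper avoids any barrier construction. It first shows (Lemma~\ref{lem.puccilem}) that a two-dimensional, $1$-homogeneous, convex, rotationally invariant operator has the same sublevel set $\{G\le 0\}$ as some Pucci $\mathcal P^+_{1,\tilde\omega}$ with $\tilde\omega\ge 1$; hence the blow-up $U_0$ for $G$ coincides with that for $\mathcal P^+_{1,\tilde\omega}$. Proposition~\ref{prop.prop_rot} then shows that in the non-trace case $\tilde\omega>1$, and the explicit analysis of Pucci homogeneities in Proposition~\ref{prop.pucci_res} (via the formulas of \cite{Leo17}) gives that $\alpha(\tilde\omega)$ is strictly increasing with $\alpha(1)=\tfrac12$, hence $\alpha_G>\tfrac12$.
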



It is unclear whether the property $\alpha_F \ge \frac12$ holds for more general operators as well.

\subsubsection{Pucci operators} In order to establish Theorem~\ref{thm.rot_inv}, and in particular, to get that $\alpha_F\ge\frac12$ for rotationally invariant operators, we need to understand first the fully nonlinear thin obstacle problem for Pucci operators; see Subsection~\ref{ssec.2.1} for the definition and properties.

Pucci operators have the advantage that computations can be made \emph{rather} explicit. When dealing with the thin obstacle problem for Pucci operators, we obtain a more detailed analysis:

\begin{prop}
\label{prop.pucci_res}
Let $u$ be a solution to \eqref{eq.thinobst} with $F = \mathcal{P}^+_{\lambda,\Lambda}$ a Pucci operator. Let $\alpha_{\mathcal{P}^+_{\lambda,\Lambda}}$ be defined as in \eqref{eq.alphan_opt} and Corollary~\ref{cor.mainprop}, the homogeneity of blow-ups at regular points. Then $\alpha_{\mathcal{P}^+_{\lambda,\Lambda}} = \alpha(\omega)$ where $\omega = \Lambda/\lambda$, $\alpha = \alpha(\omega):[1,\infty]\to [\frac12, 1)$ is strictly increasing in $\omega$, and $\alpha_\infty:= \alpha(\infty)$ is the unique solution to 
\[
2\sqrt{\alpha_\infty} = (1-\alpha_\infty)(\pi + 2\arctan\sqrt{\alpha_\infty}),
\]
that is $\alpha_\infty \approx 0.64306995\dots$. 
\end{prop}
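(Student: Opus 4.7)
The plan is to reduce the problem to an explicit scalar ODE for the angular profile of the blow-up, integrate it in closed form, and read off $\alpha$ from a transcendental matching condition. By Theorem~\ref{thm.uniq_blowup} applied to $G = \mathcal{P}^+_{\lambda,\Lambda}$, together with the rotational invariance of the Pucci operator (which makes the problem symmetric under $x_2 \to -x_2$), the uniqueness of the blow-up forces $U_0(x_1, x_2) = U_0(x_1, -x_2)$. Writing $U_0(r,\theta) = r^{1+\alpha}\Phi(\theta)$ with $\alpha := \alpha_{\mathcal{P}^+_{\lambda,\Lambda}}$, the profile $\Phi$ is even, and the global thin-obstacle conditions in \eqref{eq.global2D_intro} translate into the boundary conditions $\Phi'(0) = 0$ (smoothness across $\{x_2 = 0, x_1 > 0\}$ away from the contact set) and $\Phi(\pi) = 0$ (contact condition on $\{x_2 = 0, x_1 \le 0\}$).

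Next, computing $D^2 U_0$ in polar coordinates, its trace and determinant are
\[
\tau = r^{\alpha-1}\bigl[(1+\alpha)^2 \Phi + \Phi''\bigr], \qquad \delta = r^{2(\alpha-1)}\bigl[\alpha(1+\alpha)\Phi\bigl((1+\alpha)\Phi + \Phi''\bigr) - \alpha^2 (\Phi')^2\bigr].
\]
A short argument shows that $\tau \equiv 0$ would force $\delta \le 0$ with equality only at $\Phi \equiv 0$, so in any nontrivial sector the two eigenvalues $\mu_+, \mu_-$ of $D^2 U_0$ have strictly opposite signs. The equation $\mathcal{P}^+_{\lambda,\Lambda}(D^2 U_0) = 0$ off the contact set therefore reduces to the single scalar relation $\Lambda \mu_+ + \lambda \mu_- = 0$. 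Introducing the phase $\phi$ by $(1-\alpha^2)\Phi + \Phi'' = R\cos\phi$, $2\alpha \Phi' = R\sin\phi$, with $R \ge 0$, this relation becomes the separable first-order ODE
\[
\phi'(\theta) = \frac{A + 2\beta \cos\phi}{1 + \beta \cos\phi}, \qquad A := (1+\alpha) + (1-\alpha)\beta^2, \qquad \beta := \frac{\omega - 1}{\omega+1},
\]
with strictly positive denominator on the admissible range $\beta \in [0,1)$.

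The boundary data then yield $\phi(0) = \pi$ (from $\Phi(0) > 0$, $\Phi'(0) = 0$), while $\Phi(\pi) = 0$ forces $\cos\phi(\pi) = -\beta$; continuity of $\phi$ across the intermediate zero of $\Phi$ picks out the \emph{second} occurrence of this equation after $\phi = \pi$. Separating variables and integrating over $\theta \in [0,\pi]$ produces a closed-form transcendental equation implicitly defining $\alpha = \alpha(\omega)$, via standard antiderivatives of $d\phi/(a + b\cos\phi)$. For $\omega = 1$ one has $\beta = 0$ and $A = 1+\alpha$: the ODE trivializes to $\phi' = 1+\alpha$, and the matching condition collapses to $\alpha(1) = \tfrac{1}{2}$, recovering the Signorini case. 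Strict monotonicity of $\omega \mapsto \alpha(\omega)$ follows by implicit differentiation of the transcendental equation. In the limit $\omega \to \infty$ one has $\beta \to 1$ and $\arccos\beta \sim 2/\sqrt{\omega}$, and a careful asymptotic expansion reduces the matching condition to
\[
2\sqrt{\alpha_\infty} = (1 - \alpha_\infty)\bigl(\pi + 2\arctan\sqrt{\alpha_\infty}\bigr),
\]
whose unique solution in $(0,1)$ can then be computed numerically. The main technical difficulty will be to track the branch of $\phi$ correctly through the sector boundary where $\Phi$ crosses zero, and to justify the $\omega \to \infty$ asymptotics while keeping $\alpha(\omega)$ in a compact subset of $(0,1)$.
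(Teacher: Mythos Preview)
Your approach is correct and viable, but it differs substantially from the paper's. The paper does not set up the angular ODE directly; instead it uses the three-sector decomposition of Lemma~\ref{lem.existence} and quotes the explicit formulas from \cite{Leo17} for the homogeneities of one-signed solutions in planar cones, namely $g(\alpha,\omega)=\theta/2$ for negative solutions and $h(\alpha,\omega)=\theta/2$ for positive ones. The matching condition ``two negative sectors plus one positive sector fill the slit circle'' becomes the single scalar equation $2g(\alpha,\omega)+h(\alpha,\omega)=\pi$, and monotonicity in $\omega$ is then obtained by an explicit change of variables $x=\omega^{-1/2}$ and direct computation of the sign of $\partial_x G$ and $\partial_\alpha G$ for an auxiliary function $G(x,\alpha)$. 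The limit $\omega\to\infty$ simply amounts to setting $x=0$ in $G$.

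Your phase-variable reduction effectively re-derives the content of \cite{Leo17} from scratch: the separable ODE $\phi'=(A+2\beta\cos\phi)/(1+\beta\cos\phi)$ is exactly what underlies those formulas, and I verified that your ODE is correct. This buys self-containment, and the $\omega=1$ check is clean. The trade-off is that the monotonicity step you sketch (``implicit differentiation of the transcendental equation'') is where the real work sits: in the paper this is a page of explicit inequalities showing that both partials of $G$ are strictly negative on $(0,1)\times(0,1)$, and your one-line mention understates it. Similarly, identifying the correct branch of $\phi(\pi)$ as the \emph{second} root of $\cos\phi=-\beta$ is equivalent to recognizing that $\Phi$ has exactly one interior zero on $(0,\pi)$, i.e., to the three-sector structure; you should justify this explicitly rather than defer it. If you carry those two points through, your argument will be complete and arguably more transparent than the paper's, at the cost of being longer.
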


\subsection{Non-homogeneous operators} After obtaining the optimal regularity in Theorem~\ref{thm.main_1homog}, the next natural question is to understand what is the role of the 1-homogeneity assumption on the operator. In the next theorem we show that it is crucial: perhaps surprisingly, we obtain that we cannot expect solutions to be  always $C^{1,\alpha_F}$. This behavior is distinctive of nonlinear operators of the form \eqref{eq.F}, and it is not observed, for example, when $F$ is the Laplacian.

\begin{thm}
\label{thm.counterexample}
There exists a solution $u$ to \eqref{eq.thinobst} with some $F$ of the form \eqref{eq.F}, such that $u\notin C^{1,{\alpha_F}}(B_{1/2}^+)$, where $\alpha_F$ is given by \eqref{eq.alphan_opt}. 
\end{thm}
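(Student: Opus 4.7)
The plan is to construct a counterexample in two dimensions by prescribing the solution $u$ first and then building an operator $F$ around it. The broad strategy exploits that the sharp expansion at regular points in Theorem~\ref{thm.main_1homog} fundamentally requires 1-homogeneity: the iterative rescaling argument uses that $F(D^2 u)=0$ is invariant under the dyadic zoom-in, which fails for general $F$ because the rescaled operators $F_r(M):=r^{-1}F(rM)$ are genuinely different objects that only converge to $F^*$ as $r\downarrow 0$. This slack is what will be exploited to produce a solution worse than $C^{1,\alpha_F}$.

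I would first fix a 1-homogeneous convex uniformly elliptic operator $F_0$ on $\mathcal{M}_2^S$ with blow-up profile $U_0$ of homogeneity $1+\alpha$, $\alpha=\alpha_{F_0}\in(0,1)$, to be used as the recession of the target $F$; this forces $\alpha_F=\alpha$. Then I would build the candidate solution as
\[
u(x)=U_0(x)+w(x),\qquad w(x)=\sum_{k\ge 1}\epsilon_k\,\rho_k^{1+\alpha}\,V(x/\rho_k),
\]
where $\rho_k=2^{-k}$, $V$ is a fixed compactly supported model function symmetric across $\{x_2=0\}$ and vanishing there, and $\{\epsilon_k\}$ is a sequence chosen so that $\epsilon_k\to 0$ (hence $u$ inherits the blow-up $U_0$ and the free boundary $\{x_1\le 0,\,x_2=0\}$) but $\sum_{k\ge K}\epsilon_k$ is not summable in $K$. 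The last condition is arranged so that, telescoping over dyadic shells along a suitable ray, the $\alpha$-H\"older seminorm of $\nabla u$ on $B_{1/2}^+$ diverges, which is exactly the failure $u\notin C^{1,\alpha}(B_{1/2}^+)$.

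The core technical step is constructing $F$ compatible with this prescribed $u$. The Hessian map $x\mapsto D^2 u(x)$ traces out a subset $\Sigma\subset\mathcal{M}_2^S$ in the three-dimensional space of symmetric $2\times 2$ matrices. Following a Nadirashvili--Vlad\c{u}\c{t}-type construction, I would set $F\equiv 0$ on $\Sigma$ and extend to all of $\mathcal{M}_2^S$ as a convex uniformly elliptic function with $F(0)=0$ and with prescribed recession function $F^*=F_0$. Concretely, one chooses $V$ so that $D^2 V$ sweeps a small Lipschitz curve transverse to the positive cone, and then $\Sigma$ is a small Lipschitz perturbation of the Hessian set of $U_0$; the extension is built by interpolating between $0$ on $\Sigma$ and $F_0$ at infinity via the Legendre transform construction for convex functions, using the low dimension of $\mathcal{M}_2^S$ to maintain uniform ellipticity.

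Finally, I would check the three conditions separately: (i) $u$ is a viscosity solution of \eqref{eq.thinobst} with this $F$, where away from the contact set $F(D^2u)=0$ holds by construction and on the thin space the inequality $F(D^2u)\le 0$ follows from the symmetry of $w$ and the choice of $V$; (ii) the recession $F^*=F_0$ is as prescribed, giving $\alpha_F=\alpha$; (iii) $u\notin C^{1,\alpha}(B_{1/2}^+)$, by exhibiting pairs of points on the dyadic shells where the $\alpha$-H\"older quotient of $\nabla u$ is $\gtrsim\sum_{j\ge k}\epsilon_j\to\infty$. The main obstacle will be step (iii) together with the extension in the previous paragraph: one must simultaneously guarantee uniform ellipticity, convexity, and the prescribed recession $F_0$, which requires a careful quantitative choice of $V$, of the amplitudes $\epsilon_k$, and of the geometry of $\Sigma$ relative to the cone of uniformly elliptic matrices.
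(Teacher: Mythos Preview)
Your approach is genuinely different from the paper's, and it has a concrete gap that breaks the argument.

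\textbf{The gap in step (iii).} With your scaling, the perturbation $w$ is actually in $C^{1,\alpha}$, so $u=U_0+w\in C^{1,\alpha}$ and you have no counterexample. Indeed, on the $k$-th dyadic shell one has $\nabla w(x)=\epsilon_k\rho_k^{\alpha}\,\nabla V(x/\rho_k)$, hence the $C^\alpha$ seminorm of $\nabla w$ restricted to that shell is $\epsilon_k[\nabla V]_{C^\alpha}\to 0$. Comparing points $x_j,x_k$ on different shells $j<k$ gives
\[
\frac{|\nabla w(x_j)-\nabla w(x_k)|}{|x_j-x_k|^{\alpha}}\lesssim \frac{\epsilon_j\rho_j^{\alpha}+\epsilon_k\rho_k^{\alpha}}{\rho_j^{\alpha}}\le \epsilon_j+\epsilon_k,
\]
which is bounded. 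There is no telescoping that produces $\sum_{j\ge k}\epsilon_j$: the shells are disjoint, so the H\"older quotients do not accumulate. To force $u\notin C^{1,\alpha}$ you would need the bumps to be of size $\gg\rho_k^{1+\alpha}$ at scale $\rho_k$, but then the blow-up of $u$ at the origin is no longer $U_0$, you lose control of the free boundary structure, and the prescribed-$F$ step becomes even harder.

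\textbf{The gap in the construction of $F$.} Even setting (iii) aside, the Nadirashvili--Vlad\c{u}\c{t} style extension is far more delicate than you indicate. For a convex $F$ with $F|_\Sigma=0$ one needs $\Sigma$ to lie on the boundary of a convex body in $\mathcal{M}_2^S$; the Hessian set of $U_0$ does (it sits on the cone $\{F_0=0\}$), but an arbitrary perturbation of it need not. You would also need the supporting hyperplanes along $\Sigma$ to have slopes in the ellipticity window and to match $F_0$ at infinity. None of this is addressed, and there is no general mechanism for it.

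\textbf{What the paper does instead.} The paper fixes the \emph{operator} first and lets the PDE produce the solution. It takes
\[
F_i(A)=\max_{0\le j\le i}\bigl\{\mathcal{P}^+_{1,\Lambda_j}(A)-\tilde c_j\bigr\},\qquad \Lambda_j\uparrow 2,
\]
and solves the thin obstacle problem in a square with monotone boundary data, so each $u_i$ has a single regular free boundary point $z_i$. The recession function is $\mathcal{P}^+_{1,2}$, hence $\alpha_F=\alpha(2)$. The shifts $\tilde c_i$ are chosen inductively so that at the scale where $F$ effectively switches from $\mathcal{P}^+_{1,\Lambda_{i}}$ to $\mathcal{P}^+_{1,\Lambda_{i+1}}$, the solution still carries the lower homogeneity $1+\alpha(\Lambda_i)<1+\alpha(2)$; this uses the strict monotonicity of $\alpha(\omega)$ from Proposition~\ref{prop.pucci_res}. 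Passing to the limit $u_\infty$, one exhibits a sequence of points along $\{x_2=0\}$ approaching the free boundary where the $(1+\alpha_F)$-H\"older quotient of $u_\infty$ diverges. The convexity and uniform ellipticity of $F$ are automatic, and no inverse construction is needed.
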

\begin{rem}
In fact, one can build such counterexample in $\R^2$, for any ${\alpha_F} > \frac12$ admissible and for $F$ rotationally invariant.
\end{rem}
Intuitively, the behaviour of the counterexemple in Theorem~\ref{thm.counterexample} arises because blow-downs of $F$ can converge to a limit \emph{very slowly}, slower than any power. 

This slow convergence is essential in order to find a counterexemple: as shown in Theorem~\ref{thm.main_1homog_2}, a modification of the proof of Theorem~\ref{thm.main_1homog} also yields the same result if one assumes that, instead of $F$ being 1-homogeneous, we have 
\[
\|F_t - F^*\| \le C t^{\kappa} \quad\text{as $t \downarrow 0$}, 
\]
for  $F_t = tF(t^{-1}\,\cdot\,)$ and some $\kappa > 0$ (being $\kappa = \infty$ the 1-homogeneous situation, where $F_t = F^* = F$ for all $t>0$). \footnote{Notice that this is the same type of behavior that arises in other situations. For example, for equations in non-divergence form, the rate of convergence of the operator towards its blow-up limit needs to be algebraic (that is, we need H\"older coefficients) in order to gain two derivatives of regularity. Merely continuous coefficients do not imply, in general, that solutions are $C^2$.
}


\medskip

On the other hand, observe that, in general, thanks to Corollary~\ref{cor.mainprop} and by means of a compactness argument, the regularity of the solution is always (almost) the one given by the \emph{worst} homogeneity of the blow-ups at regular points, \eqref{eq.alphan_opt}: 

\begin{prop}
\label{prop.opt} Let $F$ satisfying \eqref{eq.F} and ${\alpha_F}$ be given by \eqref{eq.alphan_opt}.
Let $u$ be a solution to \eqref{eq.thinobst}. Then, $u\in C^{1,{\alpha_F} - \eps}(B_1^+)\cap C^{1,{\alpha_F} - \eps}(B_1^-)$ and 
\[
\|u\|_{C^{1,{\alpha_F} - \eps}(B_{1/2}^+)}+\|u\|_{C^{1,{\alpha_F} - \eps}(B_{1/2}^-)} \le C_\eps \|u\|_{L^\infty(B_1)}
\]
for any $\eps > 0$, for some $C_\eps$ depending only on $\eps$ and $F$. 
\end{prop}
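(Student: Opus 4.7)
The plan is to combine the classification of regular-point blow-ups (Corollary~\ref{cor.mainprop}) with the regular/degenerate dichotomy of \cite{RS17} and interior $C^{2,\alpha}$ estimates for fully nonlinear equations, via a standard contradiction--compactness argument. Throughout I fix $\eps>0$, normalise to $\|u\|_{L^\infty(B_1)} \le 1$, and focus on getting pointwise growth at free-boundary points on the thin space, since away from the contact set $u$ solves $F(D^2u)=0$ and interior $C^{2,\alpha}$ estimates apply on either half-space.

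The core of the argument is a uniform growth bound: there should exist $C_\eps$ independent of $u$ such that every free-boundary point $x_\circ \in \partial\{u=0\}\cap\{x_n=0\}\cap B_{1/2}$ with $|\nabla u(x_\circ)|=0$ satisfies
\[
\sup_{B_r(x_\circ)} |u| \le C_\eps\, r^{1+{\alpha_F}-\eps}, \qquad r\in(0,1/4).
\]
At degenerate points this is immediate from the dichotomy of \cite{RS17} with the better exponent $2-\eps$. At regular points, Corollary~\ref{cor.mainprop} gives full convergence of the rescalings $u_r$ to the $(1+\alpha_F(e))$-homogeneous blow-up $u_0^{(e)}$, and by definition \eqref{eq.alphan_opt} we have $\alpha_F(e)\ge {\alpha_F}$; this yields the bound pointwise but with a non-uniform constant.

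To uniformise the constant, I argue by contradiction: assume sequences $u_k$ of solutions, points $x_k$ and scales $r_k \downarrow 0$ violating the estimate with constants going to infinity. I select a scale $\rho_k \ge r_k$ realising (almost) the worst ratio $r^{-(1+{\alpha_F}-\eps)}\sup_{B_r(x_k)}|u_k|$, define the rescalings $v_k(x):=u_k(x_k+\rho_k x)/\sup_{B_{\rho_k}(x_k)}|u_k|$, and extract a limit $v_\infty$ using the $C^{1,\eps_\circ}$ estimates of \cite{Fer16}. The operators $F$ also rescale to some $F^*$ of the form \eqref{eq.F}, so $v_\infty$ is a global thin obstacle solution satisfying $\sup_{B_1}|v_\infty|=1$ and having strictly sub-$(1+{\alpha_F})$ growth at infinity. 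Blowing $v_\infty$ up at the origin then produces a profile which, by Theorem~\ref{thm.uniq_blowup} in the regular case or by the dichotomy in the degenerate case, either has homogeneity $\ge 1+{\alpha_F}$ (contradicting the strict sub-growth together with non-triviality of $v_\infty$) or decays faster than any polynomial (contradicting the normalisation). Combining the resulting uniform growth bound with interior $C^{2,\alpha}$ estimates in $B_1^\pm\setminus\{u=0\}$ delivers the $C^{1,{\alpha_F}-\eps}$ estimate by a standard argument turning pointwise growth at boundary points into a $C^{1,\beta}$ bound.

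The main obstacle is the compactness step: verifying that the selected limit $v_\infty$ is genuinely non-trivial, that its free-boundary points inherit the regular/degenerate dichotomy for the limit operator $F^*$, and that Theorem~\ref{thm.uniq_blowup} really applies at the origin, all require some care. In particular, one must ensure that the rescaled limit is a two-sided thin obstacle solution (so that Corollary~\ref{cor.mainprop} is available), and that the normal direction $e$ to the contact set stabilises in the limit, so that the value $\alpha_F(e)$ can be controlled from below by ${\alpha_F}$ along the sequence.
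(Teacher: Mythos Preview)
Your contradiction--compactness framework matches the paper's, but the final step has a genuine gap. After passing to the limit $v_\infty$, you blow it up again at the origin and appeal to the dichotomy/Corollary~\ref{cor.mainprop}. This only controls the behaviour of $v_\infty$ \emph{near~$0$}: if $0$ is regular you learn the local decay order is $1+\alpha_{F^*}(e)\ge 1+\alpha_F$, and if $0$ is degenerate you learn the local decay is almost quadratic. Neither statement contradicts the growth bound $\|v_\infty\|_{L^\infty(B_R)}\le CR^{1+\alpha_F-\eps}$, which is at \emph{infinity}, nor the normalisation $\|v_\infty\|_{L^\infty(B_1)}=1$. A function can perfectly well have $(1+\alpha_F)$-decay at $0$ and $(1+\alpha_F-\eps)$-growth at infinity without being identically zero, so no contradiction follows from your argument as written.

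What the paper does instead is to classify $v_\infty$ \emph{itself}, not its blow-up at $0$. The missing ingredient is the semiconvexity estimate $D^2_{x'}u_k\ge -C\,{\rm Id}$ from \cite[Proposition~2.2]{Fer16}. After rescaling, this becomes $D^2_{x'}v_m\ge -Cq_m^{-1}\,{\rm Id}$ with $q_m\to\infty$ (since $0$ is assumed to have superquadratic blow-up rate along the contradiction sequence), so in the limit $v_\infty$ is convex in the $x'$-directions. Now \cite[Theorem~4.1]{RS17} says that a global subquadratic solution convex in $x'$ is two-dimensional and monotone, and Proposition~\ref{prop.uniqueness} together with Lemma~\ref{lem.existence} then forces $v_\infty$ to be either $0$ or a multiple of the unique homogeneous profile of degree $\ge 1+\alpha_F$. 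Homogeneity links the behaviour at $0$ and at infinity, so the growth bound $R^{1+\alpha_F-\eps}$ for large $R$ forces $v_\infty\equiv 0$, contradicting $\|v_\infty\|_{L^\infty(B_1)}=1$. Once you insert the semiconvexity step, the worries you list at the end (stabilisation of $e$, applicability of Corollary~\ref{cor.mainprop} to the global $v_\infty$) disappear, since you no longer need to blow up a second time.
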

\begin{rem}
Here, the dependence of the constant $C_\eps$ on $F$ is in fact a dependence on the modulus of continuity of blow-downs, $\omega$, $|t F(t^{-1}\cdot) - F^*| = \omega(t)$ when $t\downarrow 0$.
\end{rem}

In particular, thanks to Theorem~\ref{thm.counterexample}, the regularity of solutions obtained by Proposition~\ref{prop.opt} is optimal (in the H\"older class), and can only be improved under further assumptions on the operator (see Sections~\ref{sec.1homog} and \ref{sec.pucci}).

\subsection{Sketches of the proofs} Let us give some of the main ideas in the proofs of our results.

\subsubsection{Uniqueness of blow-ups} 
\label{sssec.uniq}
A recurrent idea in our proofs is the following observation: 

If $w\in C^2$ satisfies a uniformly elliptic equation in non-divergence form in $\R^2$, 
\[
a_{11}(x) \partial_{11} w +a_{12}(x) \partial_{12} w+a_{22}(x) \partial_{22} w = 0,
\]
then a simple computation gives that derivatives of $w$ satisfy an equation in divergence form, 
\begin{equation}
\label{eq.div_intro}
{\rm div}\left(\tilde A(x) \nabla (\partial_1 w)\right) = 0,
\end{equation}
for some uniformly elliptic matrix $\tilde A(x)$ (see  \cite[Section 12.2]{GT83} or \cite[Section~4.2]{FR21}). 

Regular points for the thin obstacle problem collapse all variables in the thin space into one, in the blow-up limit, thus giving a two dimensional solution. The uniqueness in Theorem~\ref{thm.uniq_blowup} then follows by observing that if $u$ and $v$ are both solutions to \eqref{eq.global2D_intro}, then $\alpha u - \beta v$ is a solution to a non-divergence form equation outside of the thin space, for any $\alpha, \beta  >0$ (we are also using that $G$ is 1-homogeneous here). Even if the equation satisfied by $\alpha u - \beta v$ depends on $\alpha$ and $\beta$, it is enough to deduce that $u$ and $v$ satisfy a boundary Harnack inequality and are comparable up to the boundary (see the Appendix in Section~\ref{sec.appendix}). From here, combining it with the interior Harnack, the uniqueness of blow-ups follows by a standard argument. 

The existence (and homogeneity) then follows by constructing an explicit example taking advantage of the recent work on homogeneous solutions in cones, \cite{ASS12}. 

\subsubsection{Optimal regularity} In order to prove the optimal regularity, Theorem~\ref{thm.main_1homog}, we  prove that if $0$ is a regular point with $|\nabla u(0)| = 0$ with blow-up $u_0^{(e)}$, then we can do an expansion 
\[
u(x) = c_0 u_0^{(e)}(x) + o(|x|^{1+\alpha_{F}(e)+\sigma}),
\]
with $\sigma$ depending only on $F$.

This is done by contradiction and a very delicate compactness argument. We assume that the previous does not hold for a sequence $u_k$ with blow-ups $u_{0}^{(k)}$. In particular, we can always subtract the best approximation in $L^2$ of $u_k$ in terms of $u_0^{(k)}$, $u_k - M_k u_0^{(k)}$, and assume by contradiction that it is not lower order. Then, we blow-up as 
\[
w_k := \frac{u_k - M_k u_0^{(k)}}{\|u_k - M_k u_0^{(k)}\|_{L^\infty(B_1)}} \longrightarrow v_\infty,
\]
to obtain first a weak limit $v_\infty$. Observe that $u_k$ and $u_0^{(k)}$ satisfy the same equation outside of the thin space (since $F$ is 1-homogeneous), and thus their difference satisfies a non-divergence form equation. On the other hand, on the thin space they satisfy the same equation \emph{almost} on the same domain (the same up to lower order terms, since we assume 0 is a regular point and the domain is therefore $C^{1,\alpha}$). This is enough for us to prove uniform convergence, since non-divergence form equations have bounded $C^{\beta}$ norm for some $\beta > 0$. Thus, the previous sequence $w_k$ has a strong uniform limit. Moreover, such limit satisfies an equation in non-divergence form outside of a half-space in the thin space (where it is zero), and has a growth rate in $B_R$ bounded by $R^{1+\alpha_F(e) +\sigma}< R^2$ for $R > 1$ and some $\sigma > 0$. 

At this point, we would like to get a contradiction by classifying these solutions. In particular, we would like to show that $v_\infty$ needs to be a non-trivial multiple of $u_0^{(\infty)}$, the limit of the blow-ups $u_0^{(k)}$, which would contradict the fact that we were subtracting the best approximation in $L^2$. In order to do that, we need more information on the limit. 

The first observation is that the limit $v_\infty$ is two dimensional. This holds  because we are assuming that $0$ is a regular point (so the free boundary is smooth around it) so we can use the boundary Harnack for slit domains \cite{DS20, RT21}. 

This is, however, not enough. Even if the operator is the Laplacian, there are global two-dimensional solutions to the Laplace equation, vanishing on a half-line and with strictly subquadratic growth at infinity, that are not  blow-ups at regular points (for example, $r^\frac12 \cos(\frac12 \theta)$ in radial coordinates for the Laplacian). In order to discard this type of solution, we need a stronger ($C^1$) convergence of our sequence $w_k$, at least in the directions parallel to the thin space.

We consider then $\partial_e w_k$, that is, derivatives in the directions perpendicular to the contact set at the origin. From the observation in the proof of the uniqueness of blow-ups above, we can show that the restriction of $\partial_e w_k$ to the plane spanned by the vectors $e$ and $\be_n$ satisfies a problem of the type 
\[
\left\{
\begin{array}{rcll}
{\rm div}(A(x) \nabla w) &=& \partial_{x_1} f(x)&\quad\text{in}\quad B_1\setminus \{x_2 = 0, x_1\le 0\}\\
w &=& 0&\quad\text{on}\quad B_1\cap \{x_2 = 0, x_1\le 0\},
\end{array}
\right.
\]
for some $f$ depending on $w_k$ itself. We then show, on the one hand, that $f\in L^p$ uniformly for some $p > 2$, and on the other hand, that solutions to the previous problem are $C^\delta$ for some $\delta > 0$ --- for this the condition $p > 2$ is crucial. Hence, $w_k$ is converging in $C^1$ in the direction $e$ parallel to the thin space given by the normal to the regular point. The limit $\partial_e v_\infty$ is then continuous (also at 0), and satisfies an equation in divergence form outside of a half-line. By means of a boundary Harnack again, we compare it to $\partial_e u_0^{(\infty)}$ to show that they are equal, up to a non-zero multiple. Hence, we reach a contradiction with the fact that we were subtracting the best multiple along the sequence.

%
%
 
 \subsubsection{Non-homogeneous operators} The almost optimal regularity in Proposition~\ref{prop.opt}, as stated above, follows by a compactness argument after the classification of blow-ups.
 
We construct then the counterexample of Theorem~\ref{thm.counterexample} in two-dimensions, for rotationally invariant operators, and taking advantage of Proposition~\ref{prop.pucci_res}. In particular, we consider a regular point at the origin for a solution to a thin obstacle problem with operator
\[
F(A) = \max_{i\in \N}\{\mathcal{P}^+_{1,\Lambda_i} - c_i\},
\]
where $\Lambda_1 = 1$, $c_1 = 0$, and $\Lambda_i\uparrow 2$, $c_i\uparrow +\infty$. Observe that the constant $c_i$ is fixing the scale at which the solution is behaving like a solution with Pucci operator $\mathcal{P}^+_{1,\Lambda_i}$. Since the homogeneity for the Pucci operators is increasing, the homogeneity of the blow-up limit should be the one for $\mathcal{P}^+_{1,2}$. However, by choosing $c_i$ appropriately we can make sure that the optimal regularity associated to $\mathcal{P}^+_{1,2}$ is not achieved.

\subsection{Structure of the paper} The paper is organized as follows:

In Section~\ref{sec.2} we introduce the notation and preliminaries of our problem. In Section~\ref{sec.3} we prove the existence, uniqueness, and homogeneity of blow-ups, in particular showing Theorem~\ref{thm.uniq_blowup}, Corollary~\ref{cor.mainprop}, and Proposition~\ref{prop.opt}. In Section~\ref{sec.4} we start our proof of the optimal regularity, by showing an expansion of the solution around regular points, which is then used in Section~\ref{sec.5}  to prove our main result, Theorem~\ref{thm.main_1homog}. In Section~\ref{sec.6} we then study rotationally invariant operators, in particular proving Proposition~\ref{prop.pucci_res} first to then show Theorem~\ref{thm.rot_inv}. Finally, in Section~\ref{sec.7} we focus on non-homogeneous operators and construct the counter-example in Theorem~\ref{thm.counterexample}, as well as proving a version of Theorem~\ref{thm.main_1homog} with operators that have a quantified rate of convergence to the recession function, Theorem~\ref{thm.main_1homog_2}.

Section~\ref{sec.appendix} corresponds to an appendix of the work, where we prove a boundary Harnack principle that is used in Section~\ref{sec.3}, Theorem~\ref{thm.boundaryHarnack}.

%
%

\section{Preliminaries} 
\label{sec.2}
\subsection{Notation}\label{ssec.2.1} We denote by $\mathcal{M}_{n, \lambda, \Lambda}$ and $\mathcal{M}^S_{n, \lambda, \Lambda}$ the corresponding spaces consisting of matrices uniformly elliptic with ellipticity constants $0< \lambda \le \Lambda < \infty$. That is, 
\[
A\in \mathcal{M}_{n, \lambda, \Lambda} \quad \Longleftrightarrow \quad A\in \mathcal{M}_{n}\quad \text{and}\quad \lambda {\rm Id} \le A \le \Lambda{\rm Id}.
\]

In particular, we are considering fully nonlinear operators $F:\mathcal{M}_n\to \R$ satisfying \eqref{eq.F} that can be written  (using Einstein's notation) as
\begin{equation}
\label{eq.sup}
F(A) = \sup_{\gamma\in \Gamma} \left( L_\gamma^{ij} A_{ij} + c_\gamma\right) = \sup_{\gamma\in \Gamma} \left( {\rm tr}(L_\gamma A) + c_\gamma\right)
\end{equation}
where $(L_\gamma)_{\gamma\in \Gamma}\subset \mathcal{M}^S_{n, \lambda, \Lambda}$ and $\sup_{\gamma\in \Gamma} c_\gamma = 0$. Being uniformly elliptic implies that, for all $M, N\in \mathcal{M}_n^S$ with $N\ge 0$, then
\begin{equation}
\label{eq.unifellipt}
c_n \lambda \|N\|\le F(M+N)  -F(M) \le c_n^{-1}\Lambda\|N\|
\end{equation}
for some dimensional constant $c_n > 0$ (see \cite{CC95, FR21}). 

On the other hand, for fixed ellipticity constants $\lambda$ and $\Lambda$, Pucci operators are the extremal operators in the class of fully nonlinear uniformly elliptic operators with such ellipticity constants. Namely, we define 
\begin{equation}
\label{eq.puccis}
\begin{split}
\mathcal{P}_{\lambda, \Lambda}^+(M) := \sup_{A \in \mathcal{M}_{n,\lambda, \Lambda}} {\rm tr}\, (AM) = \Lambda \sum_{\mu_i(M) > 0}\mu_i(M) + \lambda \sum_{\mu_i(M) < 0}\mu_i(M) 
\\
\mathcal{P}_{\lambda, \Lambda}^-(M) := \inf_{A \in \mathcal{M}_{n,\lambda, \Lambda}} {\rm tr}\, (AM) = \lambda \sum_{\mu_i(M) > 0}\mu_i(M) + \Lambda \sum_{\mu_i(M) < 0}\mu_i(M) ,
\end{split}
\end{equation}
(see, for example, \cite{CC95} or \cite[Chapter 4]{FR21}), where we have denoted $\mu_i(M)$ the $i$-th eigenvalue of $M$. Observe that only $\mathcal{P}^+_{\lambda,\Lambda}$ satisfies \eqref{eq.F} since it is convex, whereas $\mathcal{P}^-_{\lambda,\Lambda}$ is concave. We call them extremal operators because for any $F$ fully nonlinear operator with ellipticity constants $\lambda\le \Lambda$ we have
\[
\mathcal{P}_{\lambda, \Lambda}^-(M)\le F(M) \le \mathcal{P}_{\lambda, \Lambda}^+(M) \qquad\text{for all}\quad M\in \mathcal{M}_n^S.
\]
We say that $F$ is rotationally invariant (and $F(D^2 u) = 0$ is  a Hessian equation) if 
\[
F(O^T M  O) = F(M) \quad\text{for all $M\in \mathcal{M}^S_n$ and for all $O\in O(n)$}. 
\]

\subsection{Preliminary results} 
Let us assume that $0\in \Gamma(u)$ is a free boundary point such that, after subtracting a hyperplane if necessary, $|\nabla u(0)| = 0$. Let us start defining what it means for 0 to be a regular free boundary point:
\begin{defi}
\label{defi.regular}
Let $u$ solve \eqref{eq.thinobst}, and let us a assume that 0 is a free boundary point such that $|\nabla u(0)| = 0$. We say that 0 is a regular free boundary point if 
\[
\limsup_{r\downarrow 0} \frac{\|u\|_{L^\infty(B_r)}}{r^{2-\eps}} = \infty
\]
for some $\eps > 0$. 
\end{defi}
By the results in \cite{RS17} we know that there exists a constant $\eps_\circ > 0 $ depending only on $\lambda$ and $\Lambda$ such that 
\[
c r^{2-\eps_\circ} \le \sup_{B_r} u \le \frac{1}{c} r^{1+\eps_\circ}
\] 
for some $c > 0$ and for $r$ small enough. In particular, by \cite[Proposition 5.2]{RS17} there exists some global function $u_0\in C^{1,\alpha}_{\rm loc}(\R^2)$ such that for some sequence $r_k\downarrow 0$
\begin{equation}
\label{eq.limit}
u_k(x) = \frac{u(r_k x)}{\|u\|_{L^\infty(B_{r_k})}} \to u_0(x'\cdot e, x_n)\quad\text{locally uniformly},
\end{equation}
where $e\in \mathbb{S}^{n-2}$ and $(e, 0)\in \mathbb{S}^{n-1}$ denotes the outward normal vector to the contact set.

\begin{lem}
\label{lem.global2D}
The blow-up limit $u_0\in C^{1,\alpha}_{\rm loc}(\overline{\R^2_+})\cap C^{1,\alpha}_{\rm loc}(\overline{\R^2_-})$ appearing in \eqref{eq.limit} satisfies $u_0(0) = |\nabla u_0(0)| = 0$ and
\begin{equation}
\label{eq.global2D}
\left\{
\begin{array}{rcll}
G(D^2 u_0) &\le& 0& \quad \text{in}\quad  \R^2\\
G(D^2 u_0) &=& 0& \quad \text{in} \quad  \R^2 \setminus (\{x_2 = 0\}\cap \{x_1 \le 0\})\\
u_0 &\ge& 0& \quad \text{on} \quad  \{x_2 = 0\}\\
u_0 &= & 0& \quad \text{on} \quad  \{x_2 = 0\}\cap \{x_1 \le 0\}, 
\end{array}
\right.
\end{equation}
for some (positively) $1$-homogeneous fully nonlinear operator $G:\mathcal{M}^S_2\to \R$ satisfying \eqref{eq.F} with ellipticity constants $\lambda$ and $\Lambda$, and depending on the normal vector to the free boundary, $e\in \mathbb{S}^{n-2}$. Moreover,
\begin{equation}
\label{eq.monotone_convex}
\partial_{x_1} u_0 \ge 0 \quad\text{in}\quad \R^2,
\end{equation}
and $\partial_{x_1x_1} u_0 \ge 0$; that is, $u_0$ is monotone non-decreasing and convex in the $x_1$ directions. 
\end{lem}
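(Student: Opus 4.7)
The strategy is to rescale the equation, identify the limiting operator as the recession function of $F$, and then pass to the limit in the viscosity sense. Set $M_k := \|u\|_{L^\infty(B_{r_k})}$ and $s_k := r_k^2/M_k$. Since $D^2 u_k(x) = s_k\, D^2 u(r_k x)$, the equation $F(D^2 u)=0$ in $B_1 \setminus \{x_n = 0, u = 0\}$ rescales to
\[
\tilde F_k(D^2 u_k) = 0, \qquad \tilde F_k(M) := s_k\, F\!\left(s_k^{-1} M\right),
\]
in $B_{1/r_k}\setminus\{x_n=0,\, u_k=0\}$, together with $\tilde F_k(D^2 u_k)\le 0$ in all of $B_{1/r_k}$. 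The two-sided bound \eqref{eq.reg_points} gives $c\, r_k^{1-\eps_\circ} \le s_k \le C\, r_k^{\eps_\circ}$, so $s_k \downarrow 0$. By the definition of the recession function, $\tilde F_k \to F^*$ locally uniformly on $\mathcal{M}_n^S$. Each $\tilde F_k$ inherits the ellipticity constants $\lambda,\Lambda$ and the convexity of $F$, so $F^*$ is convex, uniformly elliptic, and (by construction) positively $1$-homogeneous.

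\textbf{Passing to the limit.} Since $u_k \to u_0(x'\cdot e, x_n)$ in $C^{1,\alpha}_{\rm loc}(\overline{\R^n_\pm})$ and $\tilde F_k \to F^*$ uniformly on compacts, standard stability of viscosity solutions yields $F^*(D^2 u_0) \le 0$ in $\R^n$ and $F^*(D^2 u_0) = 0$ in $\R^n \setminus \{x_n = 0, u_0 = 0\}$. As $u_0$ depends only on the two variables $x_1 := x'\cdot e$ and $x_2 := x_n$, the value of $F^*(D^2 u_0)$ at $(x'\cdot e, x_n)$ depends only on the $2\times 2$ Hessian $D^2 U_0$; we define
\[
G(A) := F^*_{(e)}(A), \qquad A \in \mathcal{M}^S_2,
\]
in the sense of \eqref{eq.givenby}. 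Because $F^*$ is uniformly elliptic with constants $\lambda,\Lambda$, convex, and $1$-homogeneous, so is $G$, and its dependence on $e$ is explicit in the definition.

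\textbf{Boundary data and coincidence set.} The inequalities $u_0 \ge 0$ on $\{x_2=0\}$ and $u_0(0)=0$ pass to the limit trivially from $u_k$, and $|\nabla u_0(0)| = 0$ follows from $|\nabla u(0)|=0$ and the $C^{1,\alpha}$ convergence. To identify the contact set in the thin space, I would invoke the fact that $0$ is regular: by \cite{RS17}, the free boundary of $u$ near $0$ is a $C^1$ $(n-2)$-manifold tangent to $\{x'\cdot e = 0, x_n=0\}$ and the contact set lies on the side $\{x'\cdot e \le 0\}$; after blow-up, this becomes exactly the half-line $\{x_1\le 0, x_2=0\}$, which yields \eqref{eq.global2D}.

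\textbf{Monotonicity and convexity.} For $\partial_{x_1} u_0 \ge 0$, I would use the monotonicity-cone argument of \cite{RS17}: at a regular point the derivative $\partial_e u$ is nonnegative in a neighborhood (up to an error that is homogeneous of lower order), a property which is scaling-invariant and passes to the limit. For $\partial_{x_1 x_1} u_0 \ge 0$, I would use the tangential semiconvexity estimate $\partial_{ee} u \ge -C$, again from \cite{RS17}; after the scaling $u \mapsto u_k$, this bound becomes $\partial_{ee} u_k \ge -C s_k$, and since $s_k \downarrow 0$ the limit is convex in $x_1$.

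The main obstacle I expect is the simultaneous passage to the limit of three quantities — the solution, the operator, and the coincidence set — in the viscosity framework. In particular, one has to be careful that the coincidence set of $u_0$ in the thin space is really the full half-space $\{x_1\le 0\}$ and not a proper subset; without this, one cannot conclude $G(D^2 u_0)=0$ off the half-line in \eqref{eq.global2D}. This is where regularity of the free boundary at a regular point, which is already available from \cite{RS17}, plays an indispensable role.
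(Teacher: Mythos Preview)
Your proposal is correct and follows essentially the same route as the paper: rescale the operator to $s_k F(s_k^{-1}\cdot)$ with $s_k\to 0$ to obtain the recession function $F^*$, restrict to two variables to get $G=F_{(e)}^*$, and invoke \cite{RS17} for the two-dimensionality, the identification of the contact set as $\{x_1\le 0,\,x_2=0\}$, and the convexity from tangential semiconvexity (the latter is actually \cite[Proposition~2.2]{Fer16}). The only minor deviation is in the monotonicity: the paper derives $\partial_{x_1}u_0\ge 0$ a posteriori from convexity on the thin space plus a maximum-principle argument for $\partial_{x_1}u_0$ off the thin space, whereas you pass the local monotonicity $\partial_e u\ge 0$ near a regular point (also from \cite{RS17}) directly through the blow-up---both arguments work.
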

\begin{proof}
Let us denote $v_0(x) = u_0(e\cdot x', x_n)$. By \cite[Theorem 4.1 and Proposition 5.2]{RS17} we know that $u_k$ converges (in $C^1$ norm) to a  global subquadratic solution to a thin obstacle problem that is 2-dimensional and convex in the $x'$-variables, $v_0$, where $e$ denotes the outward normal vector to the contact set in the thin space. It also satisfies $v_0(x', 0) = 0$ in $\{e\cdot x'\le 0\}$. Let us derive the operator of the thin obstacle problem in the limit. 

Each $u_k$ satisfies a thin obstacle problem \eqref{eq.thinobst} in $B_{1/r_k}$ with operator $F_{r_k}$, where\footnote{Observe that we can alternatively write 
\[
F_{r_k}(A) := \frac{r_k^2}{\|u\|_{L^\infty(B_{r_k})}} F\left(\frac{\|u\|_{L^\infty(B_{r_k})}}{r_k^2} A\right) =   \sup_{\gamma\in \Gamma} \left( L_\gamma^{ij} A_{ij} + \frac{r_k^2c_\gamma}{\|u\|_{L^\infty(B_{r_k})}} \right).
\]
}
\[
F_{r_k}(A) := \frac{r_k^2}{\|u\|_{L^\infty(B_{r_k})}} F\left(\frac{\|u\|_{L^\infty(B_{r_k})}}{r_k^2} A\right).
\]

In particular, in the limit $k\to \infty$, since $r_k\downarrow 0$ and by assumption 0 is a regular point (and thus $u$ is strictly superquadratic at the origin) we have that the limit $v_0$ satisfies a global thin obstacle problem, with operator the recession function $F^*$ of $F$
\[
F^*(A) = \lim_{\mu\to\infty} \mu^{-1}F(\mu A).
\]
Observe that $F^*$ is positively 1-homogeneous. 

Moreover, since $u_0$ is two dimensional, from the fact that $v_0$ satisfies a global (in $\R^n$) thin obstacle problem with operator $F^*$, we deduce that $u_0$ satisfies a global (in $\R^2$) thin obstacle problem with operator $F_{(e)}$ given by 
\[
F_{(e)}(D^2 u_0) := F^*(D^2 u_0(x'\cdot e, x_n)). 
\]
It is easy to check that $F_{(e)}:\mathcal{M}_2^S\to \R$ is a 1-homogeneous and convex fully nonlinear uniformly elliptic operator with ellipticity constants $\lambda$ and $\Lambda$. 

The global convexity in \eqref{eq.monotone_convex} holds since $v_0$ is convex in the $x'$-directions, and so $u_0$ is convex in the $x_1$-direction. For the monotonicity, observe that since $u_0 = 0$ in $\{x_2 = 0\}\cap \{x_1\le 0\}$ it immediately holds on $\{x_2 = 0\}$ by convexity. For $\{x_2 \neq 0\}$ it also holds, being $\partial_{x_1} u_0$ the extension of a positive sublinear function that satisfies an equation in bounded measurable coefficients outside of the thin space. (See \cite{RS17}.)
\end{proof}

We will also use the following equivalent characterization of regular points:

\begin{lem}
\label{lem.monotone_regular}
Let $u$ be a solution to \eqref{eq.thinobst}, and let us assume that 0 is a free boundary point with $u(0) = |\nabla u(0)| = 0$. Then, the following are equivalent:
\begin{enumerate}[(i)]
\item $0$ is a regular free boundary point, in the sense of Definition~\ref{defi.regular}.
\item There exists some $\eps > 0$ and $e\in \mathbb{S}^{n-1}\cap \{x_n = 0\}$ such that $\partial_e u \ge 0$ in $B_\eps$ and $\partial_e u \not\equiv 0$. 
\end{enumerate}
\end{lem}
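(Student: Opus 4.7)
The plan is to combine the blow-up analysis of Lemma~\ref{lem.global2D} with the growth dichotomy of \cite{RS17} that underlies Definition~\ref{defi.regular}.

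For $(i)\Rightarrow(ii)$, I would begin from the classification in Lemma~\ref{lem.global2D}: if $0$ is regular, then along a subsequence $r_k\downarrow 0$ the rescalings $u_{r_k}$ converge in $C^{1,\alpha}_{\rm loc}$ to a two-dimensional profile $u_0(x)=U_0(x'\cdot e, x_n)$ with $\partial_{x_1}U_0\ge 0$. Because the equation for $U_0$ is $1$-homogeneous and $U_0\not\equiv 0$, the strong maximum principle applied to the linear equation satisfied by the incremental quotient $U_0(\cdot+h\be_1)-U_0$ upgrades the monotonicity to strict positivity $\partial_{x_1}U_0>0$ on the connected open set $\R^2\setminus(\{x_2=0\}\cap\{x_1\le 0\})$. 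Combining the quantitative lower bound of $\partial_{x_1}U_0$ on compact subsets with the $C^{1,\alpha}_{\rm loc}$ convergence $\partial_e u_{r_k}\to \partial_{x_1}U_0(x'\cdot e,x_n)$ yields $\partial_e u_{r_k}\ge 0$ on $B_1$ for large $k$, and unrescaling gives $\partial_e u\ge 0$ on $B_{r_k}$. To promote this from a subsequence to a fixed ball $B_\eps$ I would invoke the $C^1$ regularity of the regular part of the free boundary from \cite{RS17}: nearby regular points have nearby outward normals, so a single $e$ works uniformly on a small neighborhood.

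For $(ii)\Rightarrow(i)$, I would first observe that $\partial_e u\ge 0$ in $B_\eps$ together with $u(0)=0$ and $u\ge 0$ on $\{x_n=0\}$ force $u\equiv 0$ on the half-disk $\{x_n=0,\,x'\cdot e\le 0\}\cap B_\eps$, so the contact set contains a half-disk. Suppose for contradiction that $0$ is not regular; then by the dichotomy of \cite{RS17} recalled after Definition~\ref{defi.regular} we have $\sup_{B_r}u=o(r^{2-\eta})$ for every $\eta>0$. Consider the normalized rescalings $v_r(x):=u(rx)/\|u\|_{L^\infty(B_r)}$; by the interior $C^{1,\eps_\circ}$ estimate from \cite{Fer16} the family is locally precompact in $C^1_{\rm loc}$, and the half-disk inclusion scales to the inclusion of the full half-plane $\{x_n=0,\,x'\cdot e\le 0\}$ in the contact set of any subsequential limit $v_0$. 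The limit inherits $\partial_e v_0\ge 0$, solves the thin obstacle problem with the $1$-homogeneous operator $F^*$, and running the 2D-reduction that proves Lemma~\ref{lem.global2D}—whose substantive inputs are monotonicity in one thin-space direction, convexity of $F$, and a half-plane contact set, all available here—yields a non-trivial two-dimensional limit $V_0(x'\cdot e,x_n)$ solving \eqref{eq.global2D}. Applying the subquadratic growth bound of \cite{RS17} to $V_0$ gives $\|v_0\|_{L^\infty(B_R)}\gtrsim R^{1+\alpha}$ for some $\alpha\in(0,1)$; unrescaling and iterating yields $\sup_{B_r}u\gtrsim r^{1+\alpha}$ along a sequence of scales, which contradicts $\sup_{B_r}u=o(r^{2-\eta})$ as soon as $\eta<1-\alpha$.

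The main obstacle I anticipate is the 2D reduction in the $(ii)\Rightarrow(i)$ direction: Lemma~\ref{lem.global2D} is stated under the hypothesis that $0$ is regular, whereas here we want to run the same argument starting from the monotonicity hypothesis alone. The key observation is that the argument of \cite{RS17} producing the 2D limit relies only on the universal semi-convexity of thin obstacle solutions in thin-space directions combined with monotonicity in one direction; both of these are available here, so the reduction carries over. A minor technical point is to avoid a forward reference to Theorem~\ref{thm.uniq_blowup}, which is handled by using only the qualitative subquadratic bound from \cite{RS17} and not the exact homogeneity of $V_0$.
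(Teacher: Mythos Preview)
Your argument for $(i)\Rightarrow(ii)$ is in the same spirit as the paper's (which simply cites \cite[Proposition~6.1]{RS17}), though note that $\partial_{x_1}U_0$ vanishes on the contact set, so your ``quantitative lower bound on compact subsets'' only holds away from $\{x_2=0,\,x_1\le 0\}$; passing from there to $\partial_e u_{r_k}\ge 0$ on all of $B_1$ requires an additional barrier or sign argument near the contact set (as in \cite[Lemma~5]{ACS08}), not just $C^1$ closeness.

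For $(ii)\Rightarrow(i)$ your route is genuinely different from the paper's and has a real gap. The paper argues directly: from $\partial_e u\ge 0$, $\partial_e u\not\equiv 0$ and the Harnack inequality one gets $\partial_e u\ge c_0>0$ on $B_{\eps/2}\cap\{|x_n|\ge \eps/2\}$, hence monotonicity in a full cone of thin-space directions, hence a Lipschitz (then $C^{1,\alpha}$ via boundary Harnack) free boundary, and finally a barrier from below as in \cite[Proposition~7.1, Lemma~7.2]{RS17} gives $\|u\|_{L^\infty(B_r)}\ge cr^{2-\eps_\circ}$. Your approach instead assumes degeneracy and blows up to reach a contradiction via the 2D reduction behind Lemma~\ref{lem.global2D}. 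The problem is that this reduction does \emph{not} run on ``monotonicity in one thin-space direction, convexity of $F$, and a half-plane contact set'' as you write. The substantive input in \cite[Theorem~4.1, Proposition~5.2]{RS17} is \emph{convexity of the blow-up in the $x'$-variables}, which is obtained from the universal semiconvexity estimate $D^2_{x'}u\ge -C$ only after dividing by $\|u\|_{L^\infty(B_r)}/r^2$; this ratio diverges precisely because the point is regular. At a degenerate point---which is what you are assuming for contradiction---one has $\|u\|_{L^\infty(B_r)}\le C_\eta r^{2-\eta}$ for every $\eta>0$, so $r^2/\|u\|_{L^\infty(B_r)}$ need not tend to zero, the limit $v_0$ need not be convex in $x'$, and the 2D classification is unavailable. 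Your contradiction therefore does not close. (A secondary issue: even granting a classified limit, the ``unrescaling and iterating'' step that converts growth of $v_0$ into a lower bound $\sup_{B_r}u\gtrsim r^{1+\alpha}$ is not justified; knowing $\|v_0\|_{L^\infty(B_R)}\gtrsim R^{1+\alpha}$ only gives $\|u\|_{L^\infty(B_{Rr_k})}\gtrsim R^{1+\alpha}\|u\|_{L^\infty(B_{r_k})}$, which says nothing absolute since $\|u\|_{L^\infty(B_{r_k})}$ is uncontrolled from below.) The paper's direct route via free boundary regularity and a barrier avoids both difficulties.
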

\begin{proof}
The proof of the result is essentially contained in \cite{RS17}, we sketch it for completeness. The fact that (i) implies (ii) appears in the proof of the regularity of the free boundary (see \cite[Proposition 6.1]{RS17}). 

For the reverse implication, we notice that $\partial_e u$ satisfies an equation with bounded measurable coefficients in non-divergence form, in $\{x_n \neq 0\}$. Thus, since it is non-zero, by the Harnack inequality (or strong maximum principle) we have that $\partial_e u \ge c_0 > 0$ in $B_{\eps / 2} \cap\{|x_n|\ge \eps/2\}$. In particular, the same holds for a small cone of directions around $e$ (by $C^{1,\alpha}$ regularity of the solution on either side). Hence, the free boundary in $B_{\eps/2}$ is Lipschitz (proceeding as in \cite[Proposition 6.1]{RS17}). Thus, a standard application of the boundary Harnack inequality in this context (see \cite[Theorem 1.8]{RT21} and the proof of \cite[Corollary 1.9]{RT21}) we have that the free boundary is $C^{1,\alpha}$ in $B_{\eps/2}$.

Once the free boundary is $C^{1,\alpha}$, using a barrier from below (in the same argument as \cite[Proposition 7.1, Lemma 7.2]{RS17}) we have that $\|u\|_{L^\infty(B_r)} \ge cr^{2-\eps_\circ}$ for some $\eps_\circ > 0$ small, and for all $r$ small enough. In particular, it is a regular free boundary point according to Definition~\ref{defi.regular}. 
\end{proof}

The following is a well-known lemma that shows that, in dimension 2, differences of partial derivatives of solutions to a fully nonlinear equation satisfy a divergence form elliptic equation with bounded measurable coefficients. We refer the reader to \cite[Section 12.2]{GT83} or \cite[Section 4.2]{FR21}.

\begin{lem}[\cite{GT83, FR21}]
\label{lem.divergenceform}
Let $u, v\in C^{2,\alpha}(B_1)$ with $B_1\subset \R^2$ such that 
\begin{equation}
\label{eq.solutions}
\tilde F(D^2 u) = \tilde F(D^2 v) = 0\quad\text{in}\quad B_1
\end{equation}
for some $\tilde F$ of the form \eqref{eq.F} and (positively) $1$-homogeneous, with ellipticity constants $0<\tilde \lambda\le \tilde\Lambda < \infty$. Given a fixed $e\in \mathbb{S}^1$, let us define, for any $\alpha, \beta \ge 0$,
\[
w_{\alpha\beta} := \alpha \partial_e u - \beta \partial_e v. 
\]
Then $w_{\alpha\beta}$ satisfies
\begin{equation}
\label{eq.divergenceform}
{\rm div} \left( A_{\alpha\beta}(x)\nabla w_{\alpha\beta}(x)\right)= 0\quad\text{in}\quad B_1,
\end{equation}
in the weak sense, with $A_{\alpha\beta}: B_1\to \mathcal{M}_2$ such that 
\[
\frac{\tilde \lambda}{\tilde\Lambda}{\rm Id} \le A_{\alpha\beta}(x) \le \frac{\tilde \Lambda}{\tilde\lambda}{\rm Id} 
\]
for all $x\in B_1$. 
\end{lem}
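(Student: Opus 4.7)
The plan is to reduce the statement to two classical facts: a linearization of the fully nonlinear equation satisfied by the difference $V := \alpha u - \beta v$, followed by the two-dimensional trick that turns a non-divergence equation into a divergence one under differentiation.

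First, I would prove that $V$ solves a linear non-divergence elliptic equation with measurable coefficients of ellipticity $[\tilde\lambda,\tilde\Lambda]$. Here both hypotheses on $\tilde F$ enter: 1-homogeneity gives $\tilde F(D^2(\alpha u)) = \alpha \tilde F(D^2 u) = 0$ and similarly for $\beta v$, so both $\alpha u$ and $\beta v$ solve the same fully nonlinear equation $\tilde F(D^2\cdot)=0$; convexity together with uniform ellipticity makes $\tilde F$ Lipschitz with subgradients in $\mathcal M^S_{2,\tilde\lambda,\tilde\Lambda}$. Integrating a measurable selection of subgradients of $\tilde F$ along the segment from $D^2(\beta v)(x)$ to $D^2(\alpha u)(x)$ yields a measurable matrix $(\tilde a_{ij}(x))$, uniformly elliptic with constants $[\tilde\lambda,\tilde\Lambda]$, such that $\tilde a_{ij}(x)\,\partial_{ij} V(x) = 0$ a.e.\ in $B_1$. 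I would refer to \cite[Ch.~4]{FR21} for the details.

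Next comes the 2D divergence trick. After rotating coordinates so that $e = e_1$ (which preserves ellipticity constants), set $p := \partial_1 V$ and $q := \partial_2 V$, both $C^{1,\alpha}$. Using $p_2 = q_1$ (Schwarz), the linear equation reads $\tilde a_{11}\,p_1 + 2\tilde a_{12}\,p_2 + \tilde a_{22}\,q_2 = 0$, and because $\tilde a_{22}\ge\tilde\lambda>0$ one may solve for
\[
q_2 = -\frac{\tilde a_{11}}{\tilde a_{22}}\,p_1 - \frac{2\tilde a_{12}}{\tilde a_{22}}\,p_2.
\]
Substituting into the curl-free identity $\partial_1 q_2 = \partial_2 q_1 = \partial_2 p$ produces
\[
\partial_1\!\left(\frac{\tilde a_{11}}{\tilde a_{22}}\,p_1 + \frac{2\tilde a_{12}}{\tilde a_{22}}\,p_2\right) + \partial_2 p = 0,
\]
i.e.\ $\mathrm{div}(A_{\alpha\beta}\nabla p) = 0$ with $A_{\alpha\beta}(x) = \left(\begin{smallmatrix}\tilde a_{11}/\tilde a_{22} & 2\tilde a_{12}/\tilde a_{22}\\ 0 & 1\end{smallmatrix}\right)$. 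A direct calculation gives $\xi^{T} A_{\alpha\beta}\,\xi = \tilde a_{22}^{-1}\,\xi^{T}(\tilde a_{ij})\,\xi$, so the bounds on $(\tilde a_{ij})$ combined with $\tilde a_{22}\in[\tilde\lambda,\tilde\Lambda]$ force $A_{\alpha\beta}\xi\cdot\xi \in [(\tilde\lambda/\tilde\Lambda)|\xi|^{2},(\tilde\Lambda/\tilde\lambda)|\xi|^{2}]$, exactly the ellipticity bounds stated in the lemma.

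The main subtlety lies in the first step: making the linearization rigorous when $\tilde F$ is merely convex and Lipschitz requires a genuine measurable selection of subgradients, since $\tilde F$ may fail to be $C^{1}$ along the relevant segment on a set of positive measure. This is nevertheless entirely standard for convex uniformly elliptic operators and can be handled either by a convex-analysis measurable selection or by approximating $\tilde F$ with smooth convex operators and passing to the limit. Once the linearization is in hand, the 2D divergence computation is a routine pointwise manipulation valid $C^{2,\alpha}$-pointwise, and the weak form of the resulting divergence equation follows by integrating against $\phi\in C^{\infty}_{c}(B_{1})$.
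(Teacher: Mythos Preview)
Your argument is correct and follows precisely the approach of the references \cite[Section~12.2]{GT83} and \cite[Section~4.2]{FR21} that the paper cites in lieu of a proof: use positive $1$-homogeneity so that $\alpha u$ and $\beta v$ both solve $\tilde F(D^2\cdot)=0$, linearize along the segment to get a non-divergence equation for $V=\alpha u-\beta v$, then apply the two-dimensional trick to obtain the divergence-form equation for $\partial_e V$; the matrix $A_{\alpha\beta}$ you write is exactly the one the paper later displays in Step~6 of the proof of Proposition~\ref{prop.opt_reg}. One harmless typo: in your curl identity and the displayed equation that follows, $\partial_2 p$ should be $\partial_2 p_2$.
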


\section{Uniqueness of blow-ups at regular points}
\label{sec.3}

In this section we prove the existence of homogeneous blow-ups and their uniqueness, to conclude with the proofs of Theorem~\ref{thm.uniq_blowup}, Corollary~\ref{cor.mainprop}, and Proposition~\ref{prop.opt}. 

\subsection{Existence of homogeneous solutions}
Let us begin by showing that we can always construct a homogeneous solution to our global Signorini problem.

\begin{lem}
\label{lem.existence}
There exists a homogeneous function $v$ satisfying \eqref{eq.global2D}-\eqref{eq.monotone_convex} (up to a multiplicative constant) with homogeneity $1+\alpha_G\in (1, 2)$ depending only on $G$.

Moreover, such solution divides $\mathbb{S}^1$ into three sectors according to its sign, $\Sigma_1$ (negative), $\Sigma_2$ (positive), and $\Sigma_3$ (negative), where the angle of each sector is strictly less than $\pi$. (See Figure~\ref{fig.2}.)
\end{lem}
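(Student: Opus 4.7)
The plan is to construct $v$ by explicitly gluing positive homogeneous Dirichlet eigenfunctions of $G$ and of its dual $G^{\dagger}(M):=-G(-M)$ on three angular sectors of $\R^2$, relying on the principal-eigenvalue theory for $1$-homogeneous fully nonlinear uniformly elliptic operators on cones developed in \cite{ASS12}. For any open arc $I\subset\mathbb{S}^1$ of length less than $2\pi$, \cite{ASS12} provides a unique principal eigenvalue $\mu_G(I)>0$ and a (unique up to positive scaling) positive $\mu_G(I)$-homogeneous function $\psi_I^{(G)}$ on the sector $\mathcal{C}(I):=\{r(\cos\theta,\sin\theta):r>0,\,\theta\in I\}$ satisfying $G(D^2\psi_I^{(G)})=0$ in $\mathcal{C}(I)$ and vanishing on $\partial\mathcal{C}(I)\setminus\{0\}$; the analogous statement for $G^{\dagger}$ yields $\mu_{G^{\dagger}}(I)$ and $\psi_I^{(G^{\dagger})}$. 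Both $\mu_G$ and $\mu_{G^{\dagger}}$ depend continuously on the endpoints of $I$, strictly decrease when $I$ is enlarged, and diverge to $+\infty$ as $|I|\to 0$ and to $0$ as $|I|\to 2\pi$.

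Introduce parameters $-\pi<\theta_1<0<\theta_2<\pi$ and the three sectors $\Sigma_1:=\mathcal{C}((-\pi,\theta_1))$, $\Sigma_2:=\mathcal{C}((\theta_1,\theta_2))$, $\Sigma_3:=\mathcal{C}((\theta_2,\pi))$, and set
\[
v := c_2\,\psi^{(G)}_{\Sigma_2}\ \text{in }\Sigma_2,\qquad v := -c_i\,\psi^{(G^{\dagger})}_{\Sigma_i}\ \text{in }\Sigma_i \ (i=1,3),
\]
for positive constants $c_1,c_2,c_3$ to be determined. Each piece classically solves $G(D^2 v)=0$: in $\Sigma_2$ directly, and in $\Sigma_i$ ($i=1,3$) via $G(D^2 v)=G\bigl(-D^2(c_i\psi^{(G^{\dagger})}_{\Sigma_i})\bigr)=-G^{\dagger}\bigl(D^2(c_i\psi^{(G^{\dagger})}_{\Sigma_i})\bigr)=0$. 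The three pieces share a common radial homogeneity $\mu$ if and only if
\[
\mu_G((\theta_1,\theta_2)) \;=\; \mu_{G^{\dagger}}((-\pi,\theta_1)) \;=\; \mu_{G^{\dagger}}((\theta_2,\pi)),
\]
a system of two equations in $(\theta_1,\theta_2)$. Using the continuity and strict monotonicity of $\mu_G,\mu_{G^{\dagger}}$, an intermediate-value / topological-degree argument yields a solution $(\theta_1^*,\theta_2^*)\in(-\pi,0)\times(0,\pi)$, whose common value is denoted $\mu=:1+\alpha_G$. Normalizing $c_2=1$, the constants $c_1,c_3>0$ are then uniquely fixed by matching angular derivatives across the glue rays $\theta=\theta_1^*,\theta_2^*$, so that $v$ is globally $C^1$ across these rays.

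It remains to verify \eqref{eq.global2D}--\eqref{eq.monotone_convex}. On $\R^2\setminus\overline{\{x_2=0,\,x_1\le 0\}}$, the $C^1$ matching makes $v$ a viscosity solution, and Evans--Krylov regularity (applicable since $G$ is convex) upgrades $v$ to a classical solution of $G(D^2 v)=0$ there. On the slit, $v\equiv 0$, and in a full neighborhood of any slit point $x_0\ne 0$ one has $v\le 0$ (since the adjacent sectors $\Sigma_1,\Sigma_3$ flank the slit); hence any $C^2$ test function touching $v$ from below at such $x_0$ has a local maximum there, giving $D^2\phi(x_0)\le 0$ and $G(D^2\phi(x_0))\le G(0)=0$. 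At the origin, the subquadratic decay $v=O(|x|^{\mu})$ with $\mu<2$, combined with the presence of directions in $\Sigma_1\cup\Sigma_3$ where $v\sim -c|x|^\mu$, rules out any $C^2$ touching from below (the required pointwise inequality $\phi(x)\le v(x)$ in such directions is incompatible with $\phi(x)=O(|x|^2)$), so the supersolution condition is vacuous there. For the monotonicity $\partial_{x_1}v\ge 0$, Lemma~\ref{lem.divergenceform} yields that $\partial_{x_1}v$ satisfies a divergence-form uniformly elliptic equation in each half-plane $\R^2_\pm$; its boundary data on $\{x_2=0\}$ vanishes on the slit and is strictly positive on $\{x_1>0,\,x_2=0\}$ (as $v=c_2 r^{1+\alpha_G}\psi^{(G)}_{\Sigma_2}(0)>0$ there by construction), and a Phragm\'en--Lindel\"of-type comparison compatible with the growth rate $\alpha_G<1$ then gives the sign. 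Finally, the range $\alpha_G\in(0,1)$ follows from the $C^{1,\eps_\circ}$ interior estimates of \cite{Fer16} (lower bound) and the strictly subquadratic nondegenerate growth at regular points from \cite{RS17} (upper bound); the constraints $|\Sigma_i|<\pi$ hold trivially for $i=1,3$ from $\theta_1^*<0<\theta_2^*$, and for $i=2$ follow from $\mu>1$ together with the fact that $\mu_G(I)\le 1$ whenever $|I|\ge\pi$, by a Liouville-type comparison on half-planes.

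The main obstacle will be the simultaneous solution of the homogeneity-matching system for $(\theta_1^*,\theta_2^*)$, which hinges on the fine monotonicity and continuity of the principal eigenvalue as the cone opening varies (provided by \cite{ASS12}) together with a two-variable intermediate-value / topological-degree argument; the remaining verifications are relatively standard consequences of the convexity of $G$ and of the regularity theory recalled in Section~\ref{sec.2}.
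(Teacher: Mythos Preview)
Your overall strategy---building $v$ as a $C^1$ gluing of three one-signed homogeneous cone solutions coming from \cite{ASS12}---is exactly the paper's. The differences are in two places: how you solve for the angles, and how you prove $1+\alpha_G\in(1,2)$.

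For the angle-matching you invoke a two-variable intermediate-value / degree argument, while the paper reduces to one parameter: given $\tilde\theta_1\in(0,\pi)$ it first chooses $\tilde\theta_2(\tilde\theta_1)$ uniquely so that the two negative-cone eigenvalues agree, and then uses monotonicity in $\tilde\theta_1$ of the remaining two eigenvalues (one increasing, one decreasing) to locate $\theta_1^*$ by a single intermediate-value step. Your approach should work but is less explicit; the paper's reduction makes the monotonicity transparent.

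The real gap is in the bound $\alpha_G<1$. Your argument is circular: you invoke \cite{RS17} (``strictly subquadratic nondegenerate growth at regular points''), but for that you need the origin to be a \emph{regular} free boundary point for $v$. By Definition~\ref{defi.regular} (or the equivalent Lemma~\ref{lem.monotone_regular}), regularity of $0$ for a $\mu$-homogeneous $v$ is precisely the statement $\mu<2$; and the alternative route via monotonicity is the very property $\partial_{x_1}v\ge0$ that you establish using a Phragm\'en--Lindel\"of argument which already requires the growth exponent $\alpha_G$ of $\partial_{x_1}v$ to be $<1$. So \cite{RS17} cannot be invoked here without assuming what you want to prove. (A similar, milder issue affects your supersolution verification at the origin, which also uses $\mu<2$.)

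The paper breaks this circle with a direct, self-contained comparison. It takes the explicit quadratic $q_\xi(x_1,x_2)=x_2(x_1+\xi x_2)$, whose Hessian is constant, and picks the unique $\bar\xi$ with $G(D^2 q_{\bar\xi})=0$. The nodal set of $q_{\bar\xi}$ splits $\mathbb S^1$ into four sectors with homogeneity exactly $2$; comparing the sector $\Sigma_1$ against the adjacent sector of $q_{\bar\xi}$ via the strict monotonicity \eqref{eq.mon_angle} forces $1+\alpha_G<2$. For the lower bound $\alpha_G>0$ the paper is equally elementary: three sectors partition $2\pi$, so at least one has opening $<\pi$, hence (using $\alpha_\pm=1$ on half-planes) $\mu>1$ directly---no appeal to \cite{Fer16} is needed. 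Once $\alpha_G\in(0,1)$ is established independently, monotonicity follows (as in the paper and your sketch) and the rest goes through.
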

\begin{figure}
\includegraphics[scale = 1]{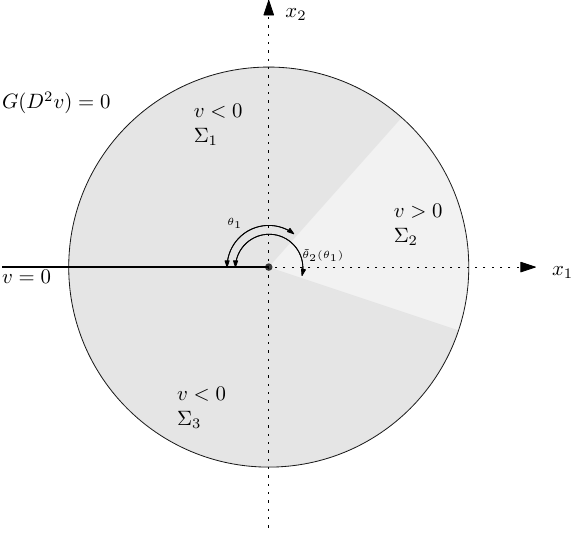}
\caption{The structure of the $(1+{\alpha_G})$-homogeneous solutions to the fully nonlinear thin obstacle problem. The unit circle is divided into three sectors according to the sign of $v$.}
\label{fig.2}
\end{figure}
\begin{proof}
We are going to build a $v$ homogeneous by dividing $\mathbb{S}^1$ into three sectors according to the sign of $v$. 
\\[0.1cm]
{\it Step 1: The three sectors.}
Let us denote by $\Sigma(\phi_0, \phi_1)$ the sector within the angles $ \phi_0<\phi_1$ with respect to $\{x_2 = 0\}\cap \{x_1 \le 0\}$, that is, 
\[
\Sigma(\phi_0, \phi_1) := \{(x_1, x_2) : \phi_0 < \arg (-x_1+ix_2) <\phi_1 \}.
\]

Let us denote by $\theta_i$ the ending angle of the $i$-th sector with respect to $\{x_2 = 0\}\cap \{x_1\le 0\}$, $\Sigma_i$, namely 
\[
\Sigma_i := \Sigma(\theta_{i-1}, \theta_i) 
\]
where we define $\theta_0 = 0$ and we have assumed $\theta_3 = 2\pi$. 

Let us now denote by $\alpha_+(\phi_0, \phi_1) >0$  the homogeneity of the unique function $u$ such that $G(D^2 u) = 0$ in $\Sigma(\phi_0, \phi_1)$, $u = 0$ on $\partial \Sigma(\phi_0, \phi_1)$ and $u > 0 $ in $\Sigma(\phi_0, \phi_1)$. Notice that $\alpha_+(\phi_0, \phi_1)$ is well defined by \cite[Theorem 1.1 and Theorem 1.2]{ASS12} and $\alpha_+(\phi_0, \phi_1)$ is continuous in $\phi_0$ and $\phi_1$ by the stability of solutions to fully nonlinear equations.

Moreover, 
\begin{equation}
\label{eq.mon_angle}
(\phi_0, \phi_1)\subsetneq (\phi_0', \phi_1')\quad\Longrightarrow\quad \alpha_+(\phi_0, \phi_1) > \alpha_+(\phi_0', \phi_1')
\end{equation}
 (see \cite[Section 3]{ASS12}). We similarly define $\alpha_-(\phi_0, \phi_1) > 0$ as  the homogeneity of the unique function $u$ such that $G(-D^2 u) = 0$ in $\Sigma(\phi_0, \phi_1)$, $u = 0$ on $\partial \Sigma(\phi_0, \phi_1)$ and $u > 0 $ in $\Sigma(\phi_0, \phi_1)$.
 
 Observe, also, that 
 \begin{equation}
 \label{eq.halfspace}
 \alpha_+(\phi, \phi+\pi) = \alpha_-(\phi, \phi+\pi) = 1,\end{equation}
 since $G(0) = 0$ and we can choose as the homogeneous solution a hyperplane.
 \\[0.1cm]
 {\it Step 2: Finding the homogeneity.}
 Given $0<\tilde \theta_1<\pi$, choose the unique $\tilde \theta_2 = \tilde \theta_2(\tilde \theta_1)> \pi$ such that $\alpha_-(0, \tilde \theta_1) = \alpha_-(\tilde \theta_2, 2\pi)$. Notice that we can always do that by continuity and monotonicty \eqref{eq.mon_angle}, since $\alpha_-(0, \tilde \theta_1), \alpha_-(\tilde \theta_2, 2\pi) \downarrow 1$ when $\tilde\theta_1 \uparrow \pi$ and $\tilde \theta_2\downarrow \pi$, and $\alpha_-(0, \tilde \theta_1), \alpha_-(\tilde \theta_2, 2\pi) \uparrow \infty$ when $\tilde\theta_1 \downarrow 0$ and $\tilde \theta_2\uparrow 2\pi$. Furthermore, $\tilde \theta_2(\tilde\theta_1)$ is decreasing (and continuous) in $\tilde\theta_1$.  
 
 Now observe that $\alpha_-(0, \tilde \theta_1)$ is decreasing in $\tilde\theta_1$, and that $\alpha_+(\tilde\theta_1, \tilde\theta_2)$ is increasing (again, by \eqref{eq.mon_angle}). By continuity, there is a unique $\tilde\theta_1 = \theta_1$ such that they are equal, which is going to be the one determining our solution (see Figure~\ref{fig.2}). 
 
 Thus, let $\theta_1$ be the unique angle such that 
 \[
1+\alpha_G :=  \alpha_-(0, \theta_1) = \alpha_+(\theta_1, \tilde\theta_2(\theta_1)) = \alpha_-(\tilde\theta_2(\theta_1), 2\pi).
 \]
 
Since we are dividing $\mathbb{S}^1$ into three sectors, at least one of the sectors is (strictly) contained in a half-space, so that $\alpha_G > 0$. Then, from \eqref{eq.halfspace} we get that the other two sectors are also strictly contained in a half-space, so that $\theta_1 < \pi$, $\tilde\theta_2(\theta_1) > \pi$, and $\tilde\theta_2(\theta_1) - \theta_1 < \pi$. Let us now show that $\alpha_G < 1$. 

In order to do that, consider the quadratic function 
\[
q_\xi(x_1, x_2) = x_2(x_1 +\xi x_2)
\]
dividing the plane into four sectors $\Sigma(0, \theta_\xi)$, $\Sigma(\theta_\xi, \pi)$, $\Sigma(\pi, \pi +\theta_\xi)$, and $\Sigma(\pi + \theta_\xi, 2\pi)$, according to the sign of $q_\xi$, where $\theta_\xi$ is such that $\tan(\theta_\xi)  = \xi^{-1}$. Moreover, 
\[
D^2q_\xi(x_1, x_2) = \begin{pmatrix}
0 & 1\\
1 & \xi
\end{pmatrix}.
\]
Then, by continuity and ellipticity \eqref{eq.unifellipt} there is a unique $\bar \xi\in \R$ such that 
\[
G(D^2 q_{\bar \xi}) = 0.
\]

Now, if ${\bar \xi} \ge \theta_1$, then $\Sigma_1 \subseteq \Sigma(0, {\bar \xi})$, which implies (by \eqref{eq.mon_angle}) $1+\alpha_G \ge 2$; and $\Sigma_2\supset \Sigma({\bar \xi}, \pi)$  (since $\tilde\theta_2(\theta_1) > \pi$), which implies $1+\alpha_G < 2$, reaching a contradiction. Hence ${\bar \xi} < \theta_1$ and therefore, again by \eqref{eq.mon_angle}, $1+\alpha_G < 2$, as we wanted to see. 
\\[0.1cm]
{\it Step 3: Conclusion.} We now consider for each sector $\Sigma_1$, $\Sigma_2$, and $\Sigma_3$ the corresponding homogeneous functions described above. Namely, let
\begin{align*}
\varphi_1 : \Sigma_1 \mapsto \R_-,\quad \varphi_2 : \Sigma_2 \mapsto \R_+, \quad \varphi_3 : \Sigma_3 \mapsto \R_-,
\end{align*}
which are $1+\alpha_G$ homogeneous, and we consider them to be defined in $\R^2$ by extending them by zero. Let us also assume that $\|\varphi_i\|_{L^\infty(B_1)} = 1$ for $i=1,2,3$ (since they are defined up to a positive constant). Notice that by Hopf lemma and using that $\varphi_i$ all have the same homogeneity, there exists a unique choice of $c_1, c_3 > 0$ such that 
\[
v = c_1 \varphi_1 + \varphi_2 + c_3\varphi_3
\]
is $C^1$ across the sectors $\Sigma_1$ to $\Sigma_2$, and $\Sigma_2$ to $\Sigma_3$. In particular, since $v$ satisfies $G(D^2 v) = 0$ in each sector, and is $C^1$ across, it satisfies $G(D^2 v) = 0$ in $\R^2 \setminus (\{x_2 = 0\}\cap \{x_1 \le 0\})$ and is the candidate that we were looking for. Finally, since on $\{x_2 = 0\}$ $v$ is monotone increasing, \eqref{eq.monotone_convex} holds (see  \cite{RS17}).
\end{proof}

\subsection{Uniqueness of blow-ups} In fact, the previous solution is the only one, as we show in the following proposition:

\begin{prop}
\label{prop.uniqueness}
There exists a unique function $u_0\in C^1(\overline{\R^2_+})\cap C^1(\overline{\R^2_-})$ satisfying \eqref{eq.global2D}-\eqref{eq.monotone_convex} with $|\nabla u_0(0)| = 0$, up to a multiplicative constant.  
\end{prop}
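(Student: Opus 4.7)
The plan is to take any non-trivial solution $u_0$ and show $u_0 = c\bar u$ for some $c>0$, where $\bar u$ is the $(1+\alpha_G)$-homogeneous solution from Lemma~\ref{lem.existence}. Write $L := \{x_2 = 0\}\cap\{x_1\le 0\}$ for the contact set. The starting point is the 2D linearization of Lemma~\ref{lem.divergenceform}: the $1$-homogeneity of $G$ means that, for every $\alpha,\beta\ge 0$, the combination
\[
w_{\alpha,\beta} \;:=\; \alpha\,\partial_{x_1} u_0 \;-\; \beta\,\partial_{x_1}\bar u
\]
solves a divergence-form, uniformly elliptic equation with bounded measurable coefficients in $\R^2\setminus L$, with ellipticity depending only on $\lambda,\Lambda$. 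By \eqref{eq.monotone_convex}, $\partial_{x_1} u_0,\partial_{x_1}\bar u \ge 0$, they are continuous up to $\{x_2=0\}$ from each side, and they vanish on $L$ (differentiating $u_0 = \bar u = 0$ along $L$). The strong maximum principle applied to $w_{1,0}$ and $w_{0,1}$ individually gives strict positivity in $\R^2\setminus L$.

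Next I apply the slit-domain boundary Harnack of the appendix (Theorem~\ref{thm.boundaryHarnack}), whose hypothesis is precisely the common-type divergence-form property of all the $w_{\alpha,\beta}$. This yields that
\[
f \;:=\; \frac{\partial_{x_1} u_0}{\partial_{x_1}\bar u}
\]
extends continuously across $L$ and obeys a H\"older oscillation decay of the form
\[
\mathop{\mathrm{osc}}_{B_r\setminus L} f \;\le\; C\,(r/R)^\gamma\,\mathop{\mathrm{osc}}_{B_R\setminus L} f \qquad \text{for all } 0<r<R,
\]
with universal $C,\gamma>0$ (the scale-invariance of the $1$-homogeneous operator and of the cone $L$ makes these constants independent of $R$). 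A preliminary blow-down compactness argument---combined with the fact that, by Lemma~\ref{lem.existence}, the only homogeneous global solutions have degree $1+\alpha_G$---shows that $\partial_{x_1} u_0$ and $\partial_{x_1}\bar u$ have comparable algebraic growth at infinity, so $\mathop{\mathrm{osc}}_{B_R\setminus L} f$ stays bounded in $R$. Letting $R\to\infty$ with $r$ fixed then forces $\mathop{\mathrm{osc}}_{B_r\setminus L} f = 0$, i.e.\ $f \equiv c$ is a positive constant.

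To pass from $\partial_{x_1} u_0 = c\,\partial_{x_1}\bar u$ to $u_0 = c\bar u$, set $h := u_0 - c\bar u$. Then $\partial_{x_1} h \equiv 0$ in $\R^2_+$ (and separately in $\R^2_-$), so $h(x_1,x_2) = \tilde h(x_2)$, with $\tilde h(0) = 0$ because $h = 0$ on $L$. Linearizing $G(D^2 u_0) - G(D^2(c\bar u)) = 0$ along the segment joining the two Hessians gives $a_{ij}(x)\,\partial_{ij} h = 0$ with $(a_{ij})$ uniformly elliptic; since $\partial_{11} h = \partial_{12} h \equiv 0$, this reduces to $a_{22}(x)\,\tilde h''(x_2) = 0$, hence $\tilde h''\equiv 0$. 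Combined with $\tilde h(0) = 0$ and $\tilde h'(0) = \partial_{x_2} u_0(0) - c\,\partial_{x_2}\bar u(0) = 0$ (using $|\nabla u_0(0)| = |\nabla \bar u(0)| = 0$), we conclude $\tilde h \equiv 0$, so $u_0 = c\bar u$.

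The hardest step is the uniform boundedness of $\mathop{\mathrm{osc}}_{B_R\setminus L} f$ in $R$ used to close the scaling argument in Step~2: the boundary Harnack applied at a single scale only gives a \emph{relative} comparison, so one really has to match the global growth of two arbitrary non-trivial solutions. The cleanest route is the blow-down compactness argument sketched above, which leans on Lemma~\ref{lem.existence} to pin the growth exponent of every non-trivial solution at $1+\alpha_G$.
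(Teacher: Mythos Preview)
Your proof has the right skeleton---the linearization via Lemma~\ref{lem.divergenceform}, the boundary Harnack comparison of $\partial_{x_1}u_0$ against $\partial_{x_1}\bar u$, and the final step recovering $u_0 = c\bar u$ from $\partial_{x_1}u_0 = c\,\partial_{x_1}\bar u$---and these match the paper. But the argument breaks down precisely where you flag it as hardest: the uniform bound on $\mathop{\mathrm{osc}}_{B_R\setminus L} f$ as $R\to\infty$.

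Your proposed fix is circular. You invoke a ``blow-down compactness argument'' together with ``the fact that, by Lemma~\ref{lem.existence}, the only homogeneous global solutions have degree $1+\alpha_G$'' to pin the growth of $u_0$. Two problems. First, Lemma~\ref{lem.existence} only \emph{constructs} one homogeneous solution; it does not assert uniqueness of the homogeneity (that can be extracted from its proof, but you would have to argue it). Second, and more seriously, blow-downs of $u_0$ are not automatically homogeneous: that is exactly the kind of statement one normally gets from a monotonicity formula, which is unavailable here and is the whole point the paper is trying to work around. So you are assuming a form of the classification you are proving. A secondary issue: Theorem~\ref{thm.boundaryHarnack} in the appendix only gives the two-sided bound $c_* \le f \le 1/c_*$, not a H\"older oscillation decay; the iteration to oscillation decay is standard but would have to be justified via the fact that each $w_{\alpha,\beta}$ again solves an admissible equation.

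The paper avoids the growth issue entirely with a direct rescaling trick. After normalizing $\partial_{x_1}u_0((1,0)) = \partial_{x_1}\bar u((1,0)) = 1$, interior Harnack plus the boundary Harnack give $C^{-1}\le f\le C$ on the annulus $B_{3/2}\setminus B_{1/2}$, and the maximum principle pushes this to all of $B_{3/2}$. Now the key point: since both $G$ and the slit $L$ are scale-invariant, the \emph{same} universal comparison holds in $B_{3R/2}$ once you normalize by $C_R := \partial_{x_1}u_0((R,0))/\partial_{x_1}\bar u((R,0))$; but evaluating at the fixed interior point $(1,0)$ traps $C_R$ in $[C^{-1},C]$, so in fact $C^{-2}\le f\le C^2$ globally. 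No blow-down, no a priori growth information on $u_0$ is needed. From the global boundedness of $f$, the paper upgrades to $f\equiv\mathrm{const}$ by a sliding argument: set $c_* = \sup\{c\ge 0: \partial_{x_1}u_0 - c\,\partial_{x_1}\bar u \ge 0\ \text{in }\R^2\}$, note that $\partial_{x_1}u_0 - c_*\partial_{x_1}\bar u\ge 0$ again satisfies an admissible divergence-form equation and vanishes on $L$, and reapply the global comparison to get a contradiction unless it vanishes identically. Your last step (the function of $x_2$ only, forced to be affine by ellipticity and then zero by the gradient condition at $0$) is correct and is exactly what the paper does.
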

\begin{proof}
Let us argue by contradiction, and let us suppose that there are two functions $u$ and $v$ satisfying \eqref{eq.global2D}-\eqref{eq.monotone_convex} with $|\nabla u(0)| = |\nabla v(0)| = 0$ (existence being given already by Lemma~\ref{lem.existence}). Observe that we can apply Lemma~\ref{lem.divergenceform} to obtain that for $\alpha, \beta\ge 0$,
\[
w_{\alpha\beta}:= \alpha u_{x_1} - \beta v_{x_1}
\]
all satisfy (outside of the contact set, $\{x_2 = 0\}\cap \{x_1 \le 0\}$) an elliptic equation in divergence form with bounded measurable coefficients in the weak sense, (that is, \eqref{eq.divergenceform}), with ellipticity constants independent of $\alpha$ and $\beta$. In particular, the same is true for both $u_{x_1}$ and $v_{x_1}$. 

Let us assume without loss of generality  (recall $G$ is 1-homogeneous and $u_{x_1}, v_{x_1}\ge 0$), that
\[
u_{x_1}( (1,0)) = v_{x_1}((1,0)) = 1.
\] 

Let us now consider the annulus $B_{3/2}\setminus B_{1/2}$. By the interior Harnack inequality for operators in divergence form (see \cite[Theorem 8.20]{GT83}) applied to both $u_{x_1}$ and $v_{x_1}$, we have that they are also comparable in $(B_{3/2}\setminus B_{1/2})\cap (\{|x_2|\ge \frac18 \}\cup \{x_1 \ge 0\})$, namely, there exists a constant $C$ depending only on $\tilde\lambda$ and $\tilde \Lambda$ such that 
\begin{equation}\label{eq.comparable}
\frac{1}{C}\le \frac{u_{x_1}}{v_{x_1}}\le C
\end{equation}
in $(B_{3/2}\setminus B_{1/2})\cap (\{8|x_2|\ge 1 \}\cup \{x_1 \ge 0\})$.

Now observe that $u_{x_1}$ and $v_{x_1}$ vanish on $\{x_2 = 0\}\cap \{x_1\le 0\}$ and they are non-negative (and continuous) everywhere, so that together with the first observation we can apply the boundary Harnack inequality from Theorem~\ref{thm.boundaryHarnack} in the half balls $B_1((-1, 0))\cap\{\pm x_2\ge 0\}$. In all, we have that $u_{x_1}$ and $v_{x_1}$ are comparable (they satisfy \eqref{eq.comparable}) in the whole annulus $B_{3/2}\setminus B_{1/2}$, for some constant $C$ depending only on $\tilde\lambda$ and $\tilde\Lambda$. In particular,
\[
u_{x_1} - \frac{1}{C} v_{x_1} \ge 0\quad\text{in}\quad B_{3/2}\setminus B_{1/2}.
\]
Since by assumption $u_{x_1} - \frac{1}{C} v_{x_1}$ also satisfies an equation in divergence form and vanishes on $\{x_2  =0\}\cap \{x_1\le 0\}$, we deduce by maximum principle that $u_{x_1} - \frac{1}{C} v_{x_1} \ge 0$ in $B_{3/2}$. The same argument with the other inequality yields that \eqref{eq.comparable} actually holds in the whole ball $B_{3/2}$. 

We now observe that they are in fact comparable in all of $\R^n$. Indeed, up to a rescaling constant $C_R = {u_{x_1}((R, 0))}/{v_{x_1}((R,0))}$ for $R\ge 1$ we can repeat the previous arguments arguments to obtain that $u_{x_1}$ and $C_R v_{x_1}$ are comparable in $B_{3R/2}$, that is, they satisfy \eqref{eq.comparable} for some $C$ that depends only on $\tilde\lambda$ and $\tilde \Lambda$. Using now that $u_{x_1}( (1,0)) = v_{x_1}((1,0)) = 1$  we deduce that $C_R$ is in fact comparable to 1, and $u_{x_1}$ and $v_{x_1}$ are comparable everywhere. That is, \eqref{eq.comparable} holds in $\R^2$. 
  
Let us now define $c_*$ as follows
\[
c_* := \sup\{c \ge 0 : u_{x_1} - c v_{x_1} \ge 0\quad\text{in}\quad \R^2\}. 
\]  
Notice that this set is non-empty, since it contains the value 0, and it is clearly bounded above. Moreover, the function $u_{x_1} - c_* v_{x_1}$ is again a non-negative solution to a divergence form equation \eqref{eq.divergenceform} vanishing on the contact set. In particular, if we define
\[
\kappa := u_{x_1} ((1,0)) - c_* v_{x_1}((1,0))
\]
we have that $\kappa > 0$ by the Harnack inequality, and we can compare the functions 
\[
\frac{1}{\kappa} (u_{x_1} - c_* v_{x_1}) \quad\text{and}\quad v_{x_1}. 
\]
These two functions satisfy the same properties as before, so  repeating the previous procedure we obtain that they are comparable everywhere. In particular, there exists some constant $c > 0$ such that 
\[
\frac{ (u_{x_1} - c_* v_{x_1}) }{\kappa v_{x_1}} \ge c > 0.
\]  
That is, $u_{x_1} - (c_* +\kappa)v_{x_1}\ge 0$ and we get a contradiction with the definition of $c_*$, unless $u_{x_1} = c_*v_{x_1}$ everywhere. In particular, since they coincide at $(1,0)$ we deduce 
\[
u_{x_1} \equiv v_{x_1} \quad\text{in}\quad \R^2. 
\]
That is, $u(x_1, x_2) = v(x_1, x_2) + f(x_2)$. Finally, since both $u$ and $v$ satisfy the same fully nonlinear equation outside the contact set and are $C^{1,\alpha}$, we deduce that $f$ needs to be $C^{1,\alpha}$ and satisfy a bounded measurable coefficient equation in $\R^2$. In particular, it must be a hyperplane. Since $u(0) = v(0) = |\nabla u(0)| = |\nabla v(0)| = 0$ we have that $f_2 \equiv 0$ and $u\equiv v$. That is, there exists at most one solution to \eqref{eq.global2D}-\eqref{eq.monotone_convex} with $|\nabla u_0(0)| = 0$.

The existence of a solution now follows from Lemma~\ref{lem.existence}
\end{proof}

\subsection{Proofs of Theorem \ref{thm.uniq_blowup},  Corollary~\ref{cor.mainprop}, and Proposition~\ref{prop.opt}}

\begin{proof}[Proof of Theorem \ref{thm.uniq_blowup}]
It is an immediate consequence of Lemma~\ref{lem.existence} and Proposition~\ref{prop.uniqueness}.
\end{proof}


\begin{proof}[Proof of Corollary~\ref{cor.mainprop}]
Observe that for any sequence $r_k\downarrow 0$ we can find a subsequence converging to a blow-up. Then, thanks to Proposition~\ref{prop.uniqueness} and Lemma~\ref{lem.existence} such blow-up is unique given by the construction in Lemma~\ref{lem.existence}. 
\end{proof}

Once blow-ups at regular points are classified, by compactness we also obtain the regularity of the solution (losing an arbitrarily small power):

\begin{proof}[Proof of Proposition~\ref{prop.opt}]
%
%
Let us rescale and assume $\|u\|_{L^\infty(B_1)} = 1$. 

We will show that, given $\eps > 0$ fixed, if 0 is a free boundary point, then 
\begin{equation}
\label{eq.toshow_conclude}
\|u\|_{L^\infty(B_r)} \le C_\eps r^{1+{\alpha_F}-\eps}
\end{equation}
for some $C$ depending only on $\eps$ and $F$. 

Indeed, let us suppose it is not true. That is, there exists a sequence $u_k$ of solutions to \eqref{eq.thinobst} with operator $F$, 0 a free boundary point for each $u_k$, and such that 
\[
\theta(r) = \sup_{k\in \N}\,\sup_{\rho > r} \rho^{-1-{\alpha_F}+\eps} \|u_k\|_{L^\infty(B_\rho)}\to \infty \quad\text{as}\quad r\downarrow 0, 
\]
where $\theta(r)$ is a monotone function in $r$. Take $r_m\downarrow 0$ and $k_m\in \N$ sequences such that 
\[
\theta_m := r_m^{-1-{\alpha_F}+\eps}\|u_{k_m}\|_{L^\infty(B_{r_m})} \ge \frac{\theta(r_m)}{2} \to +\infty \quad \text{as}\quad m\to \infty,
\]
and define 
\[
v_m(x) := \frac{u_{k_m}(r_m x)}{\|u_{k_m}\|_{L^\infty(B_{r_m})}}.
\]
Observe now that 
\[
\|v_m\|_{L^\infty(B_1)} =1,\qquad\text{and}\qquad D^2_{x'} v_m \ge -Cq_m^{-1}\quad\text{in}\quad B_{1/(2r_m)}
\]
with $q_m := r_m^2 \|u_{k_m}\|^{-1}_{L^\infty(B_{r_m})} \ge  r_m^{-1+{\alpha_F} -\eps}\theta(r_m) \to +\infty$ as $m\to \infty$, and we are using that solutions to the fully nonlinear thin obstacle problem are semiconvex in the directions parallel to the thin space, \cite[Proposition 2.2]{Fer16}. Moreover, we have that $v_m$ satisfies $F_{m}(D^2 v_m)= 0$ in $\{x_n \neq 0\}$ and $\{x_n = 0\}\cap \{u_{k_m}(r_m\,\cdot\,) > 0\}$, where $F_{m}(A) = q_m^{-1} F(q_m A)$.

In all, $v_m$ satisfy a thin obstacle problem in $B_{1/r_m}$ with operator $F_{m}$ such that $\alpha_{F_{m}} = {\alpha_F}$. Now, since
\[
\|v_m\|_{L^\infty(B_R)} = \frac{\|u_{k_m}\|_{L^\infty(B_{Rr_m})}}{\|u_{k_m}\|_{L^\infty(B_{r_m})}} \le 2R^{1+{\alpha_F}-\eps}\frac{\theta(Rr_m)}{\theta(r_m)}\le 2R^{1+{\alpha_F}-\eps}
\]
for all $1/r_m \ge R\ge 1$, by the regularity estimates for the thin obstacle problem we can let $m\to \infty$ and converge to some global solution $v_\infty$ to the fully nonlinear thin obstacle problem, with the 1-homogeneous operator $F^*$ (the blow-down of $F$), such that 
\[
\|v_\infty\|_{L^\infty(B_R)}\le R^{1+{\alpha_F} -\eps},\quad\text{for all}\quad R\ge 1\qquad\text{and}\qquad D^2_{x'} v_\infty \ge 0\quad\text{in}\quad\R^n. 
\]

However, from the existence and uniqueness of blow-ups at regular free boundary points Lemma~\ref{lem.global2D}, Proposition~\ref{prop.uniqueness} and Lemma~\ref{lem.existence}, $v_\infty$ is either 0, or $\alpha$-homogeneous, with $\alpha \ge 1+{\alpha_F}$. From the growth condition, we obtain $v_\infty\equiv 0$, contradicting $\|v_m\|_{L^\infty(B_1)}= 1$ for all $m\in \N$. 

Hence, \eqref{eq.toshow_conclude} holds. From here, now, obtaining the estimates is standard, combining \eqref{eq.toshow_conclude} with interior $C^{2,\alpha}$ estimates for fully nonlinear convex operators. 
\end{proof}

\section{Expansion around regular points} 
\label{sec.4}
The goal of this section is to prove the following result, saying that the solution has an expansion around regular points.

\begin{thm}
\label{thm.expansion}
 Let $u$ be a solution to the fully nonlinear thin obstacle problem, \eqref{eq.thinobst}. Assume, moreover, that $F$ is 1-homogeneous and of the form \eqref{eq.F}, and that 0 is a regular free boundary point satisfying $|\nabla u(0)| = 0$, and such that $\nu\in\mathbb{S}^{n-1}$ is the unit outward normal to the contact set on the thin space at 0. 
 
 Let $u_0^\nu$ denote the unique blow-up at zero (given by Lemma~\ref{lem.existence}, in the direction $\nu$), with homogeneity $1+\alpha_\nu$. Then, we have the expansion
 \[
 u(x) = c_0 u_0^\nu(x) + o\left(|x|^{1+\alpha_\nu + \sigma}\right)
 \]
 for some $c_0 > 0$ and $\sigma > 0$. The constant $\sigma$ depends only on $F$. 
\end{thm}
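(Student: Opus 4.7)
The approach is by contradiction and compactness, in the spirit of the sketch in Subsection~1.8.2. Assuming the expansion fails for every $\sigma > 0$, a standard Campanato-type selection produces a sequence of solutions $u_k$ to \eqref{eq.thinobst} with $0$ a regular free boundary point satisfying $|\nabla u_k(0)|=0$, outward normals $\nu_k \in \mathbb{S}^{n-2}$, blow-ups $u_0^{(k)}$ of homogeneity $1+\alpha_k$, scales $r_k \downarrow 0$, and near-optimal coefficients $M_k \in \R$ such that the rescalings
\[
w_k(x) := \frac{u_k(r_k x) - M_k\, u_0^{(k)}(r_k x)}{\|u_k - M_k u_0^{(k)}\|_{L^\infty(B_{r_k})}}
\]
satisfy $\|w_k\|_{L^\infty(B_1)} = 1$, a dyadic growth control $\|w_k\|_{L^\infty(B_R)} \le C R^{1+\alpha_k + \sigma}$ for $1 \le R \le 1/r_k$, while the denominator decays strictly slower than $r_k^{1+\alpha_k + \sigma}$.

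The key algebraic point is that $1$-homogeneity of $F$ makes $u_k$ and $M_k u_0^{(k)}$ solve the \emph{same} fully nonlinear equation $F(D^2\,\cdot\,) = 0$ outside the thin space, so by the usual linearisation $w_k$ satisfies an equation $\mathrm{tr}(A_k(x) D^2 w_k) = 0$ with bounded measurable coefficients in $\{x_n \ne 0\}$. On the thin space, each summand vanishes on its contact set; these contact sets are $C^{1,\alpha}$ perturbations of the half-plane with normal $\nu_k$ ($0$ being a regular point), and their rescalings converge to the common half-plane $\{x_n = 0,\ x'\cdot \nu_\infty \le 0\}$. Combining Krylov--Safonov $C^\beta$ estimates up to smooth boundaries with interior $C^{2,\alpha}$ estimates, one obtains uniform local bounds on $w_k$ in $C^\beta(\overline{\R^n_\pm})$; extracting a subsequence, $w_k \to v_\infty$ locally uniformly and $\nu_k \to \nu_\infty$. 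The limit $v_\infty$ solves a linear uniformly elliptic equation in $\{x_n \ne 0\}$, vanishes on the limit slit, and satisfies $|v_\infty(x)| \le C(1+|x|)^{1+\alpha_\infty + \sigma}$ globally. By the boundary Harnack for slit domains (\cite{DS20, RT21}) applied to $w_k$ and $u_0^{(k)}$, the tangential dependence on $x'$ collapses and $v_\infty(x) = V_\infty(x'\cdot \nu_\infty, x_n)$.

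The decisive step is to upgrade this to strong convergence of the tangential derivative $\partial_{\nu_\infty} w_k$ up to the slit, which is what rules out the spurious subquadratic 2D solutions (like $r^{1/2}\cos(\theta/2)$ in the Laplacian case). Applying Lemma~\ref{lem.divergenceform} on the 2D slice, and using 1-homogeneity, a direct computation shows that $\partial_{\nu_\infty} w_k$ solves
\[
\mathrm{div}\bigl(\widetilde A_k(x)\, \nabla \partial_{\nu_\infty} w_k\bigr) = \partial_{x_1} f_k
\]
outside the slit $\{x_2 = 0,\ x_1 \le 0\}$, with zero Dirichlet data on the slit, where $f_k$ is bounded in $L^p$ for some fixed $p > 2$ — the gain $p > 2$ uses the $C^{1,\alpha}$ regularity of $u_k$ and $u_0^{(k)}$ on each side and the growth bound on $w_k$. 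The corresponding boundary $C^\delta$ estimate for divergence-form problems with $L^p$ ($p>2$) forcing on slit domains then yields uniform Hölder regularity of $\partial_{\nu_\infty} w_k$ up to $0$, and in particular strong convergence $\partial_{\nu_\infty} w_k \to \partial_{\nu_\infty} v_\infty$, with $\partial_{\nu_\infty} v_\infty$ continuous at $0$.

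To conclude, $\partial_{\nu_\infty} v_\infty$ and $\partial_{\nu_\infty} u_0^{(\infty)}$ both solve homogeneous linear divergence-form equations in the slit domain and vanish on the slit, with $\partial_{\nu_\infty} u_0^{(\infty)} > 0$ off the slit. The boundary Harnack Theorem~\ref{thm.boundaryHarnack} together with the comparison scheme of Proposition~\ref{prop.uniqueness} forces $\partial_{\nu_\infty} v_\infty \equiv c_* \partial_{\nu_\infty} u_0^{(\infty)}$ for some $c_* \in \R$. Integration along $\nu_\infty$, combined with the linear equation in $\{x_n \ne 0\}$ and the normalisation $v_\infty(0) = |\nabla v_\infty(0)| = 0$, then yields $v_\infty = c_* u_0^{(\infty)}$. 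This contradicts the near-optimality of $M_k$: adjusting $M_k$ by a suitable rescaling of $c_*$ strictly reduces the approximation error and forces $\|w_k\|_{L^\infty(B_1)} \to 0$. I expect the main obstacle to be Step~3 — the quantitative boundary $C^\delta$ estimate for the divergence-form equation on a slit domain with $L^p$ ($p > 2$) forcing — and it is there that the value of $\sigma > 0$ is fixed, depending only on $F$.
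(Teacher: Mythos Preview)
Your strategy matches the paper's (the paper packages the whole argument into Proposition~\ref{prop.opt_reg} and then deduces Theorem~\ref{thm.expansion} in one line by rescaling), but there is a genuine gap in your endgame.

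You subtract only $M_k\,u_0^{(k)}$; the paper subtracts $L_k(r)\,x_n + Q_k(r)\,u_0^{(k)}$. This extra hyperplane term is not cosmetic. After you prove $\partial_{\nu_\infty} v_\infty \equiv c_*\,\partial_{\nu_\infty} u_0^{(\infty)}$ and integrate, you get $v_\infty = c_*\,u_0^{(\infty)} + g(x_n)$, and the non-divergence equation forces $g(x_n) = \kappa x_n$. You then assert $\kappa = 0$ from ``$|\nabla v_\infty(0)| = 0$'', but this is exactly what you have not established: your compactness gives $w_k \to v_\infty$ only in $C^\gamma$, and your slit-domain $C^\delta$ estimate controls only the \emph{tangential} derivative $\partial_{\nu_\infty} w_k$, not $\partial_n w_k$. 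So although $\partial_n w_k(0)=0$ for every $k$, nothing passes this to the limit at the tip of the slit. With $\kappa$ possibly nonzero, the single orthogonality condition $\int_{B_1} v_\infty\,u_0^{(\infty)} = 0$ coming from the optimality of $M_k$ only gives a linear relation between $c_*$ and $\kappa$, not $v_\infty \equiv 0$; so ``adjusting $M_k$'' does not yield a contradiction if the limit happens to be $v_\infty = \kappa x_n$.

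The fix is exactly what the paper does: include $L_k(r)\,x_n$ in the approximant $\phi_{k,r}$. This produces a second orthogonality condition $\int_{B_1} v_m\,x_n\,dx = 0$ (see \eqref{eq.ort_cond}), and the nondegeneracy \eqref{eq.diff} then forces $c_* = \kappa = 0$ in the limit, contradicting $\|v_m\|_{L^\infty(B_1)} = 1$. A posteriori one recovers $L = 0$ from $|\nabla u(0)| = 0$, so the final expansion has no linear term, as you want.
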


We begin by proving the following regularity result for equations in divergence form in slit domains.

\begin{lem}
\label{lem.weird_reg_est}
Let $u\in W^{1,p}(B_1)\cap C^0(B_1)$ for $B_1\subset \R^2$ and $p > 2$, $f\in L^p(B_1)$, and let us assume that $u$ satisfies
\[
\left\{
\begin{array}{rcll}
{\rm div}(\tilde A(x) \nabla u) &=& \partial_{x_1} f(x)&\quad\text{in}\quad B_1\setminus \{x_2 = 0, x_1\le 0\}\\
u &=& 0&\quad\text{on}\quad B_1\cap \{x_2 = 0, x_1\le 0\},
\end{array}
\right.
\]
where $\tilde A(x)$ is uniformly elliptic with ellipticity constants $\lambda$ and $\Lambda$.

Then $u\in C^\delta(B_1)$ for some $\delta > 0$ depending only on $\lambda$, $\Lambda$, and $p$, and 
\[
[u]_{C^\delta(B_{1/2})}\le C \left(\|u\|_{L^\infty(B_1)}+\|f\|_{L^p(B_1)}\right) 
\]
for some $C$ depending only on $\lambda$, $\Lambda$, and $p$.
\end{lem}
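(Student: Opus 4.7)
The slit $\{x_2=0,\,x_1\le 0\}$ is not a Lipschitz boundary of $B_1$ (it fails to be Lipschitz at the endpoint $0$), so standard boundary regularity theory does not apply directly. The plan is to unfold the slit by the conformal map $\phi(w)=w^2$ (identifying $\R^2\cong\C$), which sends the right half-disk $\Omega:=\{w:|w|<1,\,\operatorname{Re} w>0\}$ conformally onto $B_1\setminus\{x_2=0,\,x_1\le 0\}$ and sends the imaginary-axis segment $\{\operatorname{Re} w=0,\,|w|<1\}$ onto the slit (doubly covered). In $w$-coordinates the slit becomes a flat Lipschitz portion of the boundary of $\Omega$.

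First I would rewrite the equation in the form $\divv(\tilde A\nabla u - f\be_1)=0$ in $B_1\setminus\text{slit}$ and transplant it to $\Omega$ via $v(w):=u(\phi(w))$. Using that $D\phi(w)=2|w|R(w)$ for some rotation $R(w)$ (conformality of $\phi$), a direct change of variables shows that $v$ is a weak solution in $\Omega$ of $\divv(B \nabla v - G) = 0$, where $B(w):=R(w)^T\tilde A(\phi(w))R(w)$ and $G(w):=(D\phi(w))^T(f(\phi(w)),0)^T$. The matrix $B$ is still uniformly elliptic with constants $\lambda,\Lambda$ because the conjugation is by an orthogonal matrix. Using $|D\phi(w)|\le 2|w|$ and the Jacobian $|\phi'|^2=4|w|^2$, one computes
\[
\|G\|_{L^p(\Omega)}^p\le C\int_{B_1} |x|^{(p-2)/2}|f(x)|^p\,dx \le C\|f\|_{L^p(B_1)}^p,
\]
where the hypothesis $p>2$ is crucial to make the weight $|x|^{(p-2)/2}$ bounded on $B_1$.

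Next I would extend $v$ to all of $B_1\subset\R^2$ (in $w$-coordinates) by odd reflection across $\{\operatorname{Re} w=0\}$, extending $B$ and $G$ with the standard sign conventions (diagonal entries of $B$ reflected evenly, off-diagonal entries oddly; the first component of $G$ reflected oddly, the second evenly) so that both uniform ellipticity and the divergence-form structure are preserved. The extended $\tilde v$ is then a weak solution in $B_1$ of $\divv(B^*\nabla\tilde v - G^*)=0$ with $B^*$ uniformly elliptic and $\|G^*\|_{L^p(B_1)}\le C\|f\|_{L^p(B_1)}$. To this I would apply the classical interior De Giorgi--Nash--Moser H\"older estimate for divergence-form equations with $L^p$ right-hand side (see \cite[Ch.~8]{GT83}): there exists $\tilde\delta=\tilde\delta(\lambda,\Lambda,p)\in(0,1)$ with
\[
[\tilde v]_{C^{\tilde\delta}(B_{3/4})} \le C\bigl(\|u\|_{L^\infty(B_1)} + \|f\|_{L^p(B_1)}\bigr).
\]

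Finally I would transfer the estimate back via $u = \tilde v\circ\sqrt{\,\cdot\,}$. The map $z\mapsto\sqrt z$ is $C^{1/2}$-H\"older on bounded subsets of $\C$, with the worst modulus of continuity attained at the branch point $0$, so the composition gives $u\in C^{\tilde\delta/2}(B_{1/2})$ with the claimed estimate, and we set $\delta:=\tilde\delta/2$. The main technical obstacle is verifying that the divergence-form forcing $\partial_{x_1}f=\divv(f\be_1)$ transplants to an $L^p$ divergence-form term $G$ with $\|G\|_{L^p(\Omega)}\le C\|f\|_{L^p(B_1)}$; this is precisely where the strict inequality $p>2$ is indispensable, since it is what absorbs the weight $|w|^{p-2}$ produced by the vanishing of $\phi'$ at the branch point $w=0$.
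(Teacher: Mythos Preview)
Your conformal-unfolding approach is correct and genuinely different from the paper's argument. The paper proceeds by a direct barrier-and-iteration scheme: away from the origin the slit is a flat Dirichlet boundary and standard De Giorgi estimates apply, so only the origin needs treatment; it then constructs barriers $v_\pm$ with data $\pm 1$ on $\partial B_1$, splits $v_+=v_1+v_2$ where $v_1$ carries the boundary data and $v_2$ carries the right-hand side, bounds $\|v_2\|_{L^\infty}\le C\|f\|_{L^p}$ via the maximum principle for divergence equations with $L^p$ data in $\R^2$ (this is where $p>2$ enters for them), shows $\|v_1\|_{L^\infty(B_{1/2})}\le 1-\theta$ by boundary regularity plus a Harnack chain, and iterates to get $\|v_+\|_{L^\infty(B_r)}\le Cr^\sigma$. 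Your route is more structural, exploiting the explicit conformal geometry of the half-line slit to reduce everything to a single application of interior De Giorgi--Nash; the paper's iteration is more robust and would adapt to rougher slit geometries or higher-dimensional analogues where no conformal map is available. Two small corrections in your write-up: the reflection convention for the forcing should be $G_1^*$ even and $G_2^*$ odd (to match the odd reflection of $\tilde v$), not the reverse; and the blanket claim that $z\mapsto\sqrt z$ is $C^{1/2}$ on bounded subsets of $\C$ fails across the branch cut (try $z=-1\pm i\eps$), but the conclusion survives because the principal square root \emph{is} $C^{1/2}$ on each closed half-plane $\{x_2\ge 0\}$ and $\{x_2\le 0\}$ separately, and one bridges across $\{x_2=0\}$ by the triangle inequality through an intermediate point on the real axis.
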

\begin{proof}
Let us start by dividing $u$ by $\|u\|_{L^\infty(B_1)} + \frac{1}{\eta_\circ}\|f\|_{L^p(B_1)}$ so that we can assume $\|u\|_{L^\infty(B_1)} \le 1$ and $\|f\|_{L^p(B_1)}\le \eta_\circ$, for some $\eta_\circ$ small enough to be chosen, depending only on $p$, $\lambda$, and $\Lambda$. 

By standard results for divergence-type equations, this type of equation has interior and boundary H\"older regularity estimates (see \cite[Theorem 8.24, Theorem 8.31]{GT83}). Thus, the only problem occurs at the origin. That is, if we show that any such solution $u$ is H\"older continuous at the origin (quantitatively, i.e., by putting a barrier from above and below), then by a standard application of interior and boundary regularity estimates we are done. 

Let us define $v_\pm\in W^{1,2}(B_1)$ to be the unique solution to 
\[
\left\{
\begin{array}{rcll}
{\rm div}(\tilde A(x) \nabla v_\pm ) &=& \partial_{x_1} f(x)&\quad\text{in}\quad B_1\setminus \{x_2 = 0, x_1\le 0\}\\
v_\pm  &=& 0&\quad\text{on}\quad B_1\cap \{x_2 = 0, x_1\le 0\}\\
v_\pm  &=& \pm 1&\quad\text{on}\quad \partial B_1.
\end{array}
\right.
\]

In particular, by maximum principle $v_+ \ge u$ in $B_1$. Let us show that $v_+$ is H\"older continuous, quantitatively, at the origin. 

Let us write $v_+ = v_1 + v_2$, where $v_1, v_2\in W^{1,2}(B_1)$ satisfy 
\[
\left\{
\begin{array}{rcll}
{\rm div}(\tilde A(x) \nabla v_1) &=& 0&\quad\text{in}\quad B_1\setminus \{x_2 = 0, x_1\le 0\}\\
v_1 &=& 0&\quad\text{on}\quad B_1\cap \{x_2 = 0, x_1\le 0\}\\
v_1 &=& 1&\quad\text{on}\quad \partial B_1.
\end{array}
\right.
\]
and
\[
\left\{
\begin{array}{rcll}
{\rm div}(\tilde A(x) \nabla v_2) &=& \partial_{x_1} f(x)&\quad\text{in}\quad B_1\setminus \{x_2 = 0, x_1\le 0\}\\
v_2 &=& 0&\quad\text{on}\quad B_1\cap \{x_2 = 0, x_1\le 0\}\\
v_2 &=& 0&\quad\text{on}\quad \partial B_1.
\end{array}
\right.
\]

By maximum principle (for example, \cite[Theorem 8.15]{GT83}), since $p > 2$ and we are in $\R^2$, we have that
\[
\|v_2\|_{L^\infty(B_1)}\le C \|f\|_{L^p(B_1)} \le C \eta_\circ
\]
for some $C$ depending only on $p$, $\lambda$ and $\Lambda$.

On the other hand, by boundary regularity estimates we know that $v_1$ (which is nonnegative) is close to zero near $(-\frac12, 0)$. Combined with a sequence of applications of the interior Harnack inequality (for the function $1-v_1$) along balls covering $\partial B_{\frac12}$ we get that $v_1$ is bounded on $\partial B_{1/2}$ by $1-\theta_1$, for some $\theta_1$ small enough depending only on $\lambda$ and $\Lambda$. Hence, by maximum principle 
\[
\|v_1\|_{L^\infty(B_{1/2})}\le 1-\theta_1. 
\]

In all, we have that if $\eta_\circ$ is small enough depending only on $p$, $\lambda$, and $\Lambda$, then 
\[
\|v_+\|_{L^\infty(B_{1/2})}\le 1-\tilde \theta,
\]
for some $\tilde\theta > 0$ depending only on $p$, $\lambda$, and $\Lambda$. We can now iterate the process and repeat the argument, since $p > 2$ and assuming $\tilde\theta$ smaller if necessary depending on $p$, to get that 
\[
\|v_+\|_{L^\infty(B_{r})}\le Cr^{\sigma}
\]
for some $C$ and $\sigma > 0$ depending only on $p$, $\lambda$, and $\Lambda$. Repeating the process from below, we obtain the same result for $v_-$, thus having constructed barriers from above and below for $u$. By means of interior and boundary regularity estimates, this concludes the proof. 
\end{proof}

The following follows as \cite[Lemma 5.3]{RS16} and will be useful below. 

\begin{lem}
\label{lem.aux}
Let $u_0$ with $\|u_0\|_{L^\infty(B_1)} = 1$ be  of the form $u_0(x) = v(x'\cdot \be_1, x_n)$, where $v$ is one of the blow-ups constructed in Lemma~\ref{lem.existence} coming from some operator $G$ satisfying \eqref{eq.F}, $\tilde \alpha$-homogeneous with $\tilde\alpha\in (1, 2)$, and let $u\in C(B_1)$ with $\|u\|_{L^\infty(B_1)} = 1$. Let us define 
\[
\phi_r(x) := L_*(r) x_n + Q_*(r) u_0(x),
\]
where 
\begin{align*}
(L_*(r), Q_*(r)) & := \argmin_{(L, Q)\in \R^2}\int_{B_r} (u(x) - Lx_n - Qu_0(x))^2 \, dx.
\end{align*}
Assume that, for all $r\in (0, 1)$, we have 
\[
\|u-\phi_r\|_{L^\infty(B_r)}\le C_0 r^{\beta}
\]
with $\beta > \tilde \alpha > 1$ and $C_0 \ge 1$. Then, there is $L, Q\in \R$ with $L, Q \le C C_0$ such that 
\[
\|u - Lx_n - Q u_0\|_{L^\infty(B_r)} \le C C_0 r^\beta\quad\text{for all}\quad r\in (0, 1),
\]
for some constant $C$ depending only on $\beta$, $\tilde \alpha$, and $G$. 
\end{lem}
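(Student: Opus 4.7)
\medskip

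\textbf{Plan of proof.} The statement is of the type ``if the best two-parameter approximation at every scale decays like $r^\beta$, then there is a single two-parameter object that works at all scales'', and the proof is a standard dyadic telescoping argument combined with a norm equivalence for the finite-dimensional space spanned by $x_n$ and $u_0$. The hypotheses $\beta > \tilde\alpha > 1$ and the homogeneities ($1$ and $\tilde\alpha$ respectively) are what make all the geometric series convergent.

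\smallskip

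\emph{Step 1: Norm equivalence.} Since $u_0$ is $\tilde\alpha$-homogeneous with $\tilde\alpha\in(1,2)$ and $\|u_0\|_{L^\infty(B_1)}=1$, while $x_n$ is $1$-homogeneous, the functions $x_n$ and $u_0$ are linearly independent. The $2$-dimensional vector space they span therefore inherits all norms as equivalent; in particular, there is a constant $c=c(G) > 0$ such that for all $(L,Q)\in\R^2$,
\[
\|L\, x_n + Q\, u_0\|_{L^\infty(B_1)} \ge c\bigl(|L|+|Q|\bigr).
\]
Rescaling $x=ry$ and using the respective homogeneities, this gives, for every $r\in(0,1)$,
\[
\|L\, x_n+Q\, u_0\|_{L^\infty(B_r)} \ge c\bigl(|L|\,r + |Q|\, r^{\tilde\alpha}\bigr).
\]

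\smallskip

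\emph{Step 2: Dyadic control of the coefficients.} Set $r_k:=2^{-k}$ and write $\phi_{r_k}(x)=L_k x_n+Q_k u_0(x)$ with $L_k:=L_*(r_k)$, $Q_k:=Q_*(r_k)$. Using the triangle inequality and the hypothesis at scales $r_k$ and $r_{k+1}$,
\[
\|\phi_{r_k}-\phi_{r_{k+1}}\|_{L^\infty(B_{r_{k+1}})}\le \|u-\phi_{r_k}\|_{L^\infty(B_{r_k})}+\|u-\phi_{r_{k+1}}\|_{L^\infty(B_{r_{k+1}})}\le 2C_0 r_k^\beta.
\]
Applying Step 1 with $r=r_{k+1}$ to the function $(L_k-L_{k+1})x_n+(Q_k-Q_{k+1})u_0$ yields
\[
|L_k-L_{k+1}|\le C\, C_0\, r_k^{\beta-1},\qquad |Q_k-Q_{k+1}|\le C\, C_0\, r_k^{\beta-\tilde\alpha},
\]
with $C=C(\beta,\tilde\alpha,G)$. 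Since $\beta > \tilde\alpha > 1$, both sequences $\{L_k\}$ and $\{Q_k\}$ are Cauchy and converge to some limits $L,Q\in\R$. Using the hypothesis at $r=1$ one has $|L_0|+|Q_0|\le C C_0$ (from $\|\phi_1\|_{L^\infty(B_1)}\le 1+C_0$ and Step 1), so summing the geometric series also gives $|L|,|Q|\le C C_0$.

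\smallskip

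\emph{Step 3: Summation and conclusion.} Telescoping the tails,
\[
|L_k-L|\le \sum_{j\ge k}|L_j-L_{j+1}|\le C\, C_0\, r_k^{\beta-1},\qquad |Q_k-Q|\le C\, C_0\, r_k^{\beta-\tilde\alpha}.
\]
Therefore, in $B_{r_k}$,
\[
\|\phi_{r_k}-(L x_n+Q u_0)\|_{L^\infty(B_{r_k})}\le |L_k-L|\, r_k + |Q_k-Q|\, r_k^{\tilde\alpha}\le C\, C_0\, r_k^{\beta},
\]
and adding the hypothesized bound $\|u-\phi_{r_k}\|_{L^\infty(B_{r_k})}\le C_0 r_k^\beta$ gives
\[
\|u-(L x_n + Q u_0)\|_{L^\infty(B_{r_k})}\le C\, C_0\, r_k^\beta\qquad\text{for all }k\ge 0.
\]
For a general $r\in(0,1)$, pick $k$ with $r_{k+1}\le r< r_k$; the bound at scale $r$ differs from that at $r_k$ by a factor $2^\beta$, which is absorbed in $C$. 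This yields the claimed estimate.

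\smallskip

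\emph{Expected main obstacle.} The only non-cosmetic point is the norm equivalence in Step 1, i.e.\ checking that the coefficients of the best $L^2$-approximation are genuinely controlled by the $L^\infty$ error and separate at different scales. This is not completely automatic because $u_0$ is not polynomial, but it follows from the different homogeneities of $x_n$ and $u_0$ (equivalently, from the simple fact that $u_0$ is not a linear function, guaranteed by $\tilde\alpha\in(1,2)$); everything else is a routine dyadic telescoping that mirrors \cite[Lemma 5.3]{RS16}.
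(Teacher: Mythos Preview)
Your proof is correct and follows essentially the same dyadic telescoping argument as the paper. The only difference is in how the key coefficient separation (your Step~1, the paper's inequality \eqref{eq.usingthat}) is justified: the paper evaluates $\phi_{2r}-\phi_r$ at specific angles where one of $x_n$, $u_0$ vanishes and the other does not (namely $\theta=\pi$, where $x_n=0$ but $u_0>0$ by the construction in Lemma~\ref{lem.existence}, and some $\tilde\theta\in(0,\pi)$ where $u_0$ changes sign), whereas you invoke the abstract equivalence of norms on the two-dimensional space spanned by $x_n$ and $u_0$. Both arguments yield the same scale-by-scale bounds $|L_k-L_{k+1}|\lesssim r_k^{\beta-1}$ and $|Q_k-Q_{k+1}|\lesssim r_k^{\beta-\tilde\alpha}$, after which the proofs are identical.
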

\begin{proof}
Since both $u_0$ and $x_n$ are two-dimensional functions, let us assume that we are in $\R^2$ (the same argument is valid in $\R^n$). We consider polar coordinates and let us write $u_0(x) = \rho^{\tilde\alpha} \tilde u_0(\theta)$ and $x_n = \rho \sin{\theta}$, for some $\tilde u_0:\mathbb{S}^1 \to \R$ coming from Lemma~\ref{lem.existence}. 

Observe now that, 
\[
|Q_*(2r) - Q_*(r)|r^{\tilde\alpha} \le C\|\phi_{2r}- \phi_r\|_{L^\infty(B_r)} 
\]
since for $\theta = \pi$, $\phi_{2r}(x)-\phi_r(x) = \rho^{\tilde\alpha} (Q_*(2r)-Q_*(r)) \tilde u_0(\pi)$ (where $\rho = |x|$), and $\tilde u_0(\pi) > 0$ by construction (again, see Lemma~\ref{lem.existence}). On the other hand, we also have 
\[
|L_*(2r) - L_*(r)|r \le C\|\phi_{2r}- \phi_r\|_{L^\infty(B_r)} 
\]
since there exists some $\tilde\theta\in (0, \pi)$ such that $\tilde u_0(\tilde\theta) = 0$. In particular, for such angle (that depends on $G$) we have $\phi_{2r}(x)-\phi_r(x) = \rho (L_*(2r)-L_*(r)) \sin (\tilde\theta)$. In all, we have shown that
\begin{equation}
\label{eq.usingthat}
\max\{|Q_*(2r)-Q_*(r)|r^{\tilde \alpha}, |L_*(2r)-L_*(r)|r\} \le C\|\phi_{2r}- \phi_r\|_{L^\infty(B_r)} \le C C_0 r^\beta,
\end{equation}
for some $C$ that depends on $G$

Now the proof is exactly the same as \cite[Lemma 5.3]{RS16}, using \eqref{eq.usingthat}.
\end{proof}

\begin{prop}
\label{prop.opt_reg}
Let $u$ be a solution to \eqref{eq.thinobst} with $F$ 1-homogeneous and of the form \eqref{eq.F}. Let us suppose $\|u\|_{L^\infty(B_1)} = 1$, that 0 is a regular free boundary point with $|\nabla u(0)| = 0$, and that if $u_0$ is the unique blow-up  \eqref{eq.limit} at 0 with homogeneity $1+\alpha_0$ (as constructed in Lemma~\ref{lem.existence}, then 
\begin{equation}
\label{eq.closeness}
\left\|\frac{u(rx)}{\|u\|_{L^\infty(B_r)}} - u_0(x)\right\|_{L^\infty(B_1)}\le \eta\quad\text{for all}\quad r < 1.
\end{equation}

Let us assume, moreover, that $\{u = 0\}\cap \{x_n = 0\}\cap B_1 =: \Omega'$ is a $C^{1,\alpha}$ domain in $\{x_n = 0\}$, with $C^{1,\alpha}$ bounded by $\eta > 0$. 

Then, there exists a constant $Q> 0$ with $Q \le  C$ and $\eta_\circ> 0$ depending only on $F$ and $\alpha$, such that if $\eta < \eta_\circ$ then
\[
|u(x) - Q u_0(x)| \le C |x|^{1+\alpha_0 + \sigma}\quad\text{in}\quad B_1
\]
for some constants $C$ and $\sigma > 0$ depending only on $F$, and $\alpha$.
\end{prop}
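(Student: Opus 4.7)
The plan is to reduce the conclusion to a dyadic decay estimate for the best $L^2(B_r)$-approximation
\[
\phi_r(x) := L_*(r)\, x_n + Q_*(r)\, u_0(x)
\]
of $u$ by elements of $\mathrm{span}\{x_n, u_0\}$; namely, I aim to show
\[
\|u - \phi_r\|_{L^\infty(B_r)} \le C\, r^{1+\alpha_0+\sigma} \quad \text{for all } r \in (0,1).
\]
Granted this, Lemma~\ref{lem.aux} provides constants $L, Q$ with $|L|, |Q| \le C$ such that $\|u - Lx_n - Q u_0\|_{L^\infty(B_r)} \le C\, r^{1+\alpha_0+\sigma}$ for all $r$; dividing by $r$, letting $r \downarrow 0$, and using $|\nabla u(0)| = |\nabla u_0(0)| = 0$ together with $1+\alpha_0 > 1$ forces $L = 0$, and the proposition follows.

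To prove the decay estimate I argue by contradiction and compactness. If it fails, then for $k$ large there is a scale $r_k \downarrow 0$, chosen as the largest at which the estimate is violated by a factor $k$, for which $M_k := \|u - \phi_{r_k}\|_{L^\infty(B_{r_k})} \ge k\, r_k^{1+\alpha_0+\sigma}$. Define
\[
w_k(x) := \frac{u(r_k x) - \phi_{r_k}(r_k x)}{M_k}.
\]
The maximality of $r_k$ combined with the $L^2$-optimality of $\phi_r$, after a standard dyadic argument, yields $\|w_k\|_{L^\infty(B_R)} \le C R^{1+\alpha_0+\sigma}$ for all $R \in [1, 1/(2r_k)]$, $\|w_k\|_{L^\infty(B_1)} = 1$, and $L^2(B_1)$-orthogonality of $w_k$ to $\mathrm{span}\{x_n, u_0\}$. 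Because $F$ is $1$-homogeneous, convex, and uniformly elliptic, both the rescaled $u$ and the rescaled $Q_*(r_k)\, u_0$ are viscosity solutions of $F(D^2 \cdot) = 0$ off the thin space, so by linearization $w_k$ satisfies a uniformly elliptic linear non-divergence equation with bounded measurable coefficients in $B_{1/r_k} \setminus \{x_n = 0\}$. The rescaled contact set on $\{x_n = 0\}$ is a $C^{1,\alpha}$ perturbation of $\{x' \cdot \nu \le 0\}$ whose size tends to zero (by the $C^{1,\alpha}$ hypothesis on $\Omega'$), and $w_k$ vanishes on this perturbation up to a lower-order error. Combining interior $C^\beta$ estimates for non-divergence equations with the $C^{1,\eps_\circ}$ regularity of \cite{Fer16}, $w_k$ is equicontinuous on compacts of $\overline{\R^n_\pm}$, and a subsequence converges locally uniformly to a function $v_\infty$ satisfying a uniformly elliptic linear non-divergence equation off the thin space, $v_\infty \equiv 0$ on $\{x_n = 0, x' \cdot \nu \le 0\}$, the growth $\|v_\infty\|_{L^\infty(B_R)} \le C R^{1+\alpha_0+\sigma}$ for $R \ge 1$, $\|v_\infty\|_{L^\infty(B_1)} = 1$, and $L^2(B_1)$-orthogonality to $\mathrm{span}\{x_n, u_0\}$.

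The hardest step is the classification of $v_\infty$. As pointed out in the introduction, slit-vanishing and subquadratic growth alone are not enough (witness $r^{1/2}\cos(\theta/2)$ for the Laplacian), so I upgrade the convergence to $C^1$ in the thin-space direction $e = \nu$. First, by the slit boundary Harnack of \cite{DS20, RT21} applied at the regular point (controlling the tangential derivatives $\partial_{e'} w_k$ for $e' \perp e$), $v_\infty$ depends only on $(x' \cdot e, x_n)$ and is two-dimensional. Applying Lemma~\ref{lem.divergenceform} to the 2D restriction together with the $1$-homogeneity of $F$, $\partial_e w_k$ satisfies
\[
\divv \bigl(A_k(x)\, \nabla \partial_e w_k\bigr) = \partial_{x_1} f_k \quad \text{in } B_1 \setminus \{x_2 = 0,\, x_1 \le 0\},
\]
with uniformly elliptic $A_k$ and $f_k$ bounded uniformly in $L^p$ for some $p > 2$ (the right-hand side $\partial_{x_1} f_k$ arises, via integration by parts, from the mismatch between the equation of $u$ and that of $u_0$ due to the $C^{1,\alpha}$ perturbation of the contact set). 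Lemma~\ref{lem.weird_reg_est} then produces a uniform $C^\delta$ estimate for $\partial_e w_k$ up to the slit, so $\partial_e w_k \to \partial_e v_\infty$ uniformly on compacts. Hence $\partial_e v_\infty$ is continuous at $0$, vanishes on $\{x_2 = 0, x_1 \le 0\}$, and solves a divergence-form elliptic equation off the slit. Comparing $\partial_e v_\infty$ with $\partial_e u_0$, which shares these properties and is strictly positive in $\{x_2 > 0\}$, via the boundary Harnack in Theorem~\ref{thm.boundaryHarnack} (exactly as in the proof of Proposition~\ref{prop.uniqueness}), we obtain $\partial_e v_\infty = c\, \partial_e u_0$ for some $c \in \R$. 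Integrating along $e$ on each side of the thin space and using that the remainder must be affine in $x_n$ (by the growth and the non-divergence equation), one concludes $v_\infty = c\, u_0 + \ell\, x_n$ for some $\ell \in \R$. The $L^2(B_1)$-orthogonality of $v_\infty$ to $\mathrm{span}\{x_n, u_0\}$ finally forces $c = \ell = 0$, contradicting $\|v_\infty\|_{L^\infty(B_1)} = 1$.
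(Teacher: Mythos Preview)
Your strategy matches the paper's: contradiction on the best $L^2$-approximant $\phi_r=L_*(r)x_n+Q_*(r)u_0$, growth control on the normalized remainder, reduction of the limit to 2D via the slit boundary Harnack, upgrade to $C^1$ convergence in the $e$-direction through a divergence-form equation for $\partial_e w_k$ and Lemma~\ref{lem.weird_reg_est}, and Liouville via Theorem~\ref{thm.boundaryHarnack}.

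There is, however, a genuine gap in your account of the divergence-form step. The right-hand side $\partial_{x_1}f_k$ does \emph{not} come from the contact-set perturbation or any mismatch between the equations for $u$ and $u_0$. In a truly 2D problem Lemma~\ref{lem.divergenceform} gives a \emph{homogeneous} equation; the source term appears precisely because $n>2$. The paper writes the $n$-dimensional non-divergence equation $\sum a^m_{ij}\partial_{ij}v_m=0$, divides by $a^m_{nn}$, differentiates in $x_1$, and obtains a 2D divergence-form operator in $(x_1,x_n)$ plus a forcing $-\partial_1\sum_{j=2}^{n-1}(\sum_i a^m_{ij}/a^m_{nn}\,\partial_i)\partial_j v_m$ built from the \emph{transverse} second derivatives; restricting to the slice $x_2=\dots=x_{n-1}=0$ yields $f_k$. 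The uniform $L^p$ bound with $p>2$ is then the heart of the matter: it needs a pointwise estimate $|\nabla\partial_j u_{k_m}(x_\circ)|\le C|x_\circ|^{\alpha_{k_m}-1+\delta}$ on the slice, obtained by interpolating the tangential decay $|\partial_j u_{k_m}|\le C|x|^{\alpha_{k_m}+\beta/2}$ (from the slit boundary Harnack applied to $\partial_{e'}u_{k_m}/\partial_{\nu}u_{k_m}$) against interior and boundary $C^{2,\tilde\gamma}$ estimates for the convex operator. Without this interpolation the exponent is only $\alpha_{k_m}-1-\eps$, which fails $L^2$. A smaller point you skipped: one must check $Q_*(r_k)>0$ so that $Q_*(r_k)u_0$ solves $F(D^2\cdot)=0$ rather than $F(-D^2\cdot)=0$; this is where hypothesis~\eqref{eq.closeness} is actually used.
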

\begin{proof}
We will show that there exist constants $Q > 0$ and $L$ with $Q, |L| \le C$ and $\eta_\circ> 0$ such that if $\eta < \eta_\circ$ then
\[
|u(x) - Lx_n - Q u_0(x)| \le C |x|^{1+\alpha_0 + \sigma}\quad\text{in}\quad B_1
\]
for some $\sigma > 0$ and $C$.  In particular, a posteriori, using that $\nabla u(0) = 0$ we deduce $L = 0$. 

Notice also that, if such $Q$ exists, we necessarily have that $Q\ge 0$. Moreover, arguing as in the  proof of the claim \eqref{eq.limsup} we actually have that $Q > 0$. 

We divide the proof into 7 steps. 
\\[0.1cm]
{\it Step 1: The setting.}
Let us argue by contradiction, and let us suppose that there are sequences $\Omega_k$, $u_k$, $\nu_k\in \mathbb{S}^{n-2}$ such that 
\begin{itemize}
\item $u_k$ is a solution to \eqref{eq.thinobst} with operator $F$, such that 0 is a regular free boundary point, $|\nabla u_k(0) | = 0$ and $\|u_k\|_{L^\infty(B_1)} = 1$,
\item the blow-up at 0 of $u_k$ is $u_0^{(k)}$ with homogeneity $1+\alpha_k\in (1, 2)$, and \eqref{eq.closeness} holds for $u_k$ and $u_0^{(k)}$, and for some $\eta$ to be chosen, 
\item the set $\{u_k = 0\}\cap \{x_n = 0\}\cap B_1 = \Omega'_k$ is a $C^{1,\alpha}$ domain in $\{x_n = 0\}$ with $C^{1,\alpha}$ norm bounded by $\eta$,
\item the outward normal to the contact set $\Omega_k'$ at 0 is given by $\nu_k$, so that in particular $u_0^{(k)}(x', x_n) = \tilde u_0^{(k)}(x'\cdot \nu_k, x_n)$ for some $\tilde u_0^{(k)} :\R^2 \to \R$,
\end{itemize}
but they are such that for all $C$ there exists some $k\in \N$ such that there is no constants $L$ and $Q$ satisfying 
\[
|u_k(x) - L x_n - Q u_0^{(k)}(x)|\le C |x|^{1+\alpha_k+\sigma}\quad\text{in}\quad B_1.
\]
In particular, there are no $L$ and $Q$ such that 
\[
|u_k(x) - L x_n - Q u_0^{(k)}(x)|\le C|x|^{1+\Pi} \le C |x|^{1+\alpha_k+\sigma}\quad\text{in}\quad B_1.
\]
where $\Pi := 1+\alpha_\infty+2\sigma$, 
 we are assuming that, up to taking a subsequence, $\alpha_k \to \alpha_\infty \in (0, 1)$, and we are taking $k$ large enough. The constant $\sigma$ will be chosen later in terms of $F$, and $\alpha$.
\\[0.1cm]
{\it Step 2 General properties.} Let us denote 
\[
\phi_{k, r}(x) := L_k(r) x_n + Q_k(r) u_0^{(k)}(x),
\]
where 
\begin{align*}
(L_k(r), Q_k(r)) & := \argmin_{(L,Q)\in \R^2}\int_{B_r} (u_k(x) - Lx_n -Qu_0^{(k)}(x))^2 \, dx.
\end{align*}

In particular, a simple computation yields that 
\[
L_k(r) =  \frac{\langle u_0^{(k)}, u_0^{(k)}\rangle_r \langle x_n, u_k\rangle_r -\langle u_0^{(k)}, u_k\rangle_r \langle u_0^{(k)}, x_n\rangle_r }{\langle u_0^{(k)}, u_0^{(k)} \rangle_r \langle x_n, x_n \rangle_r -\langle u_0^{(k)}, x_n\rangle_r^2}
\]
and
\[
Q_k(r) =  \frac{\langle u_0^{(k)}, u_k\rangle_r \langle x_n, x_n\rangle_r -\langle u_k, x_n\rangle_r \langle u_0^{(k)}, x_n\rangle_r }{\langle u_0^{(k)}, u_0^{(k)} \rangle_r \langle x_n, x_n \rangle_r -\langle u_0^{(k)}, x_n\rangle_r^2},
\]
where for the sake of readability we have denoted 
\[
\langle f, g\rangle_r := \int_{B_r} f(x)g(x)\, dx. 
\]

Since $u_0^{(k)}$ and $x_n$ are  linearly independent, and they are homogeneous, we can assume that there exists some small constant $c_0 > 0$ depending only on $F$ such that 
\begin{equation}
\label{eq.diff}
 \langle x_n, x_n \rangle_r \langle u_0^{(k)}, u_0^{(k)} \rangle_r -\langle u_0^{(k)}, x_n\rangle_r^2 \ge c_0 r^{2(2+\alpha_k +n)} > 0.
\end{equation}

Notice that $Q_k(r) > 0$ if and only if
\[
\int_{B_1} x_n^2\, dx \int_{B_1} u_0^{(k)}(x) \frac{u_k(rx)}{\|u_k\|_{L^\infty(B_r)}}\, dx - \int_{B_1}  \frac{u_k(rx)}{\|u_k\|_{L^\infty(B_r)}} x_n \, dx \int_{B_1} u_0^{(k)}(x) x_n\, dx>0.
\]
From \eqref{eq.closeness} and \eqref{eq.diff} with $r  =1$, we deduce that the previous holds if $\eta$ is small enough (depending on $F$), and thus we get that 
\begin{equation}
\label{eq.Qpositive}
Q_k(r) > 0.
\end{equation}

\noindent {\it Step 3: The blow-up sequence.}
On the other hand, from Lemma~\ref{lem.aux}, we have 
\[
\sup_{k\in \N} \sup_{r > 0}\left\{r^{-\Pi}\|u_k- \phi_{k, r}\|_{L^\infty(B_r)}\right\} = +\infty.
\]

We now claim that there exist subsequences $k_m\to \infty$ and $r_m\downarrow 0$ such that 
\begin{equation}
\label{eq.boundu}
\theta_m := r_m^{-\Pi} \|u_{k_m} - \phi_{k_m, r_m}\|_{L^\infty(B_{r_m})} \to +\infty,
\end{equation}
and if we denote $\phi_m := \phi_{k_m, r_m}$, and 
\[
v_m(x) := \frac{u_{k_m}(r_m x) - \phi_m(r_m x)}{\|u_{k_m} - \phi_m\|_{L^\infty(B_{r_m})}},
\]
then we  have a bound on the growth control of $v_m$ given by
\begin{equation}
\label{eq.growthctrl}
\|v_m\|_{L^\infty(B_R)} \leq CR^\Pi\quad \textrm{for all} \quad\frac{1}{2r_m} \ge R \geq 1
\end{equation}
and a bound for $L_{k_m}(r_m)$ and $Q_{k_m}(r_m)$ given by
\begin{equation}
\label{eq.qbound}
L_{k_m}(r_m)\le C\theta_m, \qquad Q_{k_m}(r_m) \leq C\theta_m.
\end{equation}

Observe that by definition of $\phi_m$ (and homogeneity of $u_0^{(k_m)}$) we have the orthogonality condition
\begin{equation}
\label{eq.ort_cond}
\int_{B_1} v_m(x) u_0^{(k_m)}(x)\, dx = 0,\qquad\text{and}\qquad \int_{B_1} v_m(x) x_n\, dx = 0,
\end{equation}
and simply by definition we have 
\begin{equation}
\label{eq.contradict}
\|v_m\|_{L^\infty(B_1)} = 1.
\end{equation}

Let us show \eqref{eq.boundu}-\eqref{eq.growthctrl}-\eqref{eq.qbound}.  We start by defining the monotone function 
\[
\theta(r) := \sup_{k\in \N}\sup_{\rho  >r} \left\{\rho^{-\Pi}\|u_k - \phi_{k, \rho}\|_{L^\infty(B_\rho)}\right\},
\]
so that, for $r > 0$, $\theta(r) < \infty$, and $\theta(r) \uparrow \infty$ as $r\downarrow 0$. Take now $r_m$ and $k_m$ subsequences such that 
\[
r_m^{-\Pi} \|u_{k_m} - \phi_{k_m, r_m}\|_{L^\infty(B_{r_m})} =:\theta_m \ge \frac{\theta(r_m)}{2}\to \infty
\]
as $r_m\downarrow 0$. 

From the definition of $\phi_{k, r}$, $\phi_{k, 2r} - \phi_{k, r} = \big(Q_k (2r) - Q_k (r) \big) u_0^{(k)} + (L_k(2r) - L_k(r)) x_n$, where we recall that $u_0^{(k)}$ is $1+\alpha_k$ homogeneous, and thus arguing as in \eqref{eq.usingthat} we have
\begin{align*}
\max\{|Q_k(2r)-&Q_k(r)|r^{1+ \alpha_k}, |L_k(2r)-L_k(r)|r\} \le C\|\phi_{2r}- \phi_r\|_{L^\infty(B_r)}
\\
& \leq  C\|\phi_{k, 2r} - u_k\|_{L^\infty(B_{2r})}+ C\|\phi_{k, r} - u_k\|_{L^\infty(B_r)}\leq Cr^\Pi \theta(r).\\
\end{align*}

Proceeding inductively, if $R = 2^N$, then
\begin{equation}
\label{eq.longeq}
\begin{split}
\frac{r^{1+\alpha_k -\Pi} |Q_k(Rr)- Q_k(r)|}{\theta(r)}& \leq C\sum_{j = 0}^{N-1} 2^{j(\Pi - 1- \alpha_k)} \frac{{(2^jr)}^{1+\alpha_k-\Pi} |Q_k(2^{j+1}r)- Q_k(2^jr)|}{\theta(r)} \\
& \leq C \sum_{j = 0}^{N-1} 2^{j(\Pi - 1- \alpha_k)} \frac{\theta(2^j r)}{\theta(r)} \leq C 2^{N(\Pi - 1-\alpha_k)} = CR^{\Pi - 1- \alpha_k}.
\end{split}
\end{equation}

Similarly with the terms $L_k(Rr)$ we have 
\[
\frac{r^{1-\Pi}}{\theta(r)} |L_k(Rr) - L_k(r)|\le C R^{\Pi - 1}.
\]
Thus, we obtain a bound on the growth control of $v_m$ given by \eqref{eq.growthctrl}. Indeed,
\begin{align*}
\|v_m\|_{L^\infty(B_R)} & = \frac{\|u_{k_m} - L_{k_m}(r_m) x_n - Q_{k_m}(r_m) u_0^{(k_m)}\|_{L^\infty(Rr_m)}}{\|u_{k_m} - \phi_{k_m, r_m}\|_{L^\infty(B_{r_m})}}\\
& \leq \frac{2}{\theta(r_m) r_m^\Pi} \|u_k - L_{k_m}(Rr_m) x_n - Q_{k_m}(Rr_m) u_0^{(k_m)}\|_{L^\infty(Rr_m)} +\\
&~~~~~~~~~~~~~~~~+\frac{2}{\theta(r_m) r_m^\Pi} |L_{k_m}(Rr_m) - L_{k_m}(r_m) | Rr_m\\
&~~~~~~~~~~~~~~~~+\frac{2}{\theta(r_m) r_m^\Pi} |Q_{k_m}(Rr_m) - Q_{k_m}(r_m) | (Rr_m)^{1+\alpha_k}\\
& \leq 2\frac{R^\Pi \theta(Rr_m)}{\theta(r_m)} + CR^\Pi,
\end{align*}
and now \eqref{eq.growthctrl} follows from the monotonicity of $\theta$.

Notice also that the previous computation in \eqref{eq.longeq} also gives a bound for $L_k(r)$ and $Q_k(r)$ given by
\[
L_k(r)\le C\theta(r), \qquad Q_k(r) \leq C\theta(r),
\]
which follows by putting $R = r^{-1}$. This gives \eqref{eq.qbound}.
\\[0.1cm]
{\it Step 4: Convergence of the blow-up sequence}. We have that (since $Q_{k_m}(r_m) > 0$, by \eqref{eq.Qpositive}), for any $\beta \ge 0$, 
\begin{equation}
\label{eq.equation}
\mathcal{P}_{\lambda, \Lambda}^- (D^2 (v_m - \beta u_0^{(k_m)}))  \le 0 \le \mathcal{P}_{\lambda, \Lambda}^+ (D^2 (v_m - \beta u_0^{(k_m)}))  
\end{equation}
in $B_1\setminus (\Omega_{k_m}'\cup \{x\cdot \nu_{k_m} \le 0, x_n = 0\})$ and also
\[
v_m - \beta u_0^{(k_m)} = 0\qquad\text{on}\qquad \Omega_{k_m}'\cap\{x\cdot \nu_{k_m}  \le 0, x_n = 0\}.
\]

Let us define $U_m^+ := \Omega_{k_m}'\cup \{x\cdot \nu_{k_m}\le 0, x_n = 0\}$. Observe that, from $C^{1,\tilde \alpha}$ regularity of solutions to fully nonlinear equations and the fact that $\Omega'_{k_m}$ is $C^{1,\alpha}$, we have that $|\nabla_{x'}u_{k_m}|\le Cr^{\tilde\alpha(1+\alpha)}$ in $B_r\cap U_m^+$. Similarly, using  \eqref{eq.qbound} we have that  $|\nabla_{x'}\phi_{m}|\le C\theta_m r^{\tilde\alpha(1+\alpha)}$ in $B_r\cap U_m^+$. Combining this we get that
\[
\|\nabla_{x'}u_{k_m} - \nabla_{x'}\phi_{m}\|_{L^\infty(B_r\cap U_m^+)}\le C \theta_m r^{\tilde\alpha(1+\alpha)}\qquad\text{for all}\quad r < \frac14,
\]
and 
\[
[\nabla_{x'} u_{k_m} - \nabla_{x'} \phi_{m}]_{C^{\tilde\alpha}(B_r\cap U_m^+)}\le C\theta_m \qquad\text{for all}\quad r < \frac14.
\]
 Observe that by the almost optimal regularity of solutions (Proposition~\ref{prop.opt}) and by taking $\eta_\circ$ small (so that the domain is almost flat) we can assume that $\tilde\alpha$ is arbitrarily close to $\alpha_k$. In particular, if $k$ is large enough, we can choose also $\sigma$ small enough so that $\Pi - 1 < \tilde\alpha(1+\alpha)$ in $B_{1/4}$.

In all, interpolating the previous two inequalities and using that $\Pi - 1 < \tilde\alpha(1+\alpha)$ in $B_{1/4}$ we obtain that 
\[
[\nabla_{x'} u_{k_m} - \nabla_{x'} \phi_{m}]_{C^{\gamma}(B_r\cap U_m^+)}\le C\theta_m r^{\Pi - 1}\qquad\text{for all}\quad r < \frac14,
\]
for some $\gamma > 0 $.

In particular, taking $r=r_m$, we get 
\[
[\nabla v_m]_{C^{\gamma}(B_1\cap r_m^{-1}U_m^+)}\le C.
\]
Repeating the same argument for any $R \ge 1$ with $r_m R < \frac14$, 
\[
[\nabla v_m]_{C^{\gamma}(B_R\cap r_m^{-1}U_m^+)}\le C(R).
\]

Together with \eqref{eq.equation} for $\beta =0$ we have that $v_m$ (which is continuous) is $C^{1,\gamma}$ in $B_R\cap r_m^{-1}U_m^+$ and satisfies a non-divergence form equation outside. A barrier argument combined with interior estimates gives
\[
[v_m]_{C^{\gamma}(B_R)}\le C(R)
\]
for some possibly smaller $\gamma > 0$, for $C(R)$ independent of $m$. 

In all, we can take $m \to \infty$ and $v_m$ converge, up to subsequences, locally uniformly to some $v_\infty\in C^\gamma_{\rm loc}(\R^n)$. Moreover, up to taking a further subsequence, we also assume $u_0^{(k_m)}$ converges locally uniformly to $u_0^\infty$, and $\nu_k$ to $\nu_\infty$, so that $\partial_e u_0^\infty = 0$ for all $e\in \mathbb{S}^{n-2}$ with $e\cdot \nu_\infty = 0$. Taking \eqref{eq.equation} to the limit, we get that, for all $\beta \ge 0$,
\begin{equation}
\label{eq.nondiv_eq}
\mathcal{P}_{\lambda, \Lambda}^- (D^2 (v_\infty - \beta u_0^{\infty}))  \le 0 \le \mathcal{P}_{\lambda, \Lambda}^+ (D^2 (v_\infty - \beta u_0^{\infty})) 
\end{equation}
in $B_1\setminus \{x\cdot \nu_\infty \le 0, x_n = 0\}$, and $v_\infty = 0$ in $\{x\cdot \nu_\infty \le 0, x_n = 0\}$. Finally, from \eqref{eq.growthctrl}, 
\[
\|v_\infty\|_{L^\infty(B_R)}\le C R^{\Pi}\quad\text{for all}\quad R\ge 1. 
\]

{\it Step 5: Two dimensional solution.} 
From the regularity in Proposition~\ref{prop.opt}, we know that for any $\eps > 0$, there exists $m$ large enough and $C$ depending only on $F$, $\alpha$, and $\eps$, such that
\[
|\partial_{\nu_{k_m}} u_{k_m}| \le C|x|^{\alpha_{k_m} - \eps}\quad \text{in}\quad B_{r_m}.
\]
We are using here that, for $m$ large enough, the normal to $\Omega_{k_m}'$ in $B_{r_m}$ does not vary too much, thus the corresponding regularity does not vary too much either. 

On the other hand, by the boundary Harnack inequality in slit domains for equations in non-divergence form (\cite{DS20} or \cite[Theorem 1.2]{RT21}) we know that there exists some $\beta > 0$ depending only on $\alpha$, and $F$ such that 
\[
w_m:= \frac{\partial_e u_{k_m}}{\partial_{\nu_{k_m}} u_{k_m}}\in C^\beta(B_{1/2})\quad\text{for all}\quad e\in \mathbb{S}^{n-2} : e\cdot \nu_{k_m} = 0,
\]
with estimates.
In particular, since the normal at the free boundary at 0 is $\nu_{k_m}$, $w_m(0) = 0$ and together with the almost optimal regularity in Proposition~\ref{prop.opt} we obtain 
\begin{equation}
\label{eq.fromharnack}
|\partial_{e} u_{k_m}|\le C |x|^{\alpha_{k_m} -\eps+\beta} \le C |x|^{\alpha_{k_m} +\beta/2} \quad\text{in}\quad B_{r_m},\quad\text{for all}\quad e\in \mathbb{S}^{n-2} : e\cdot \nu_{k_m} = 0,
\end{equation}
by choosing $\eps > 0$ small enough, and for some $C$ that now depends only on $\alpha$, and $F$. Thus 
\begin{equation}
\label{eq.unif_vm}
{\|\partial_e v_m\|_{L^\infty(B_1)}}\le \frac{r_m^{1-\Pi +\alpha_{k_m}+\beta/2}}{\theta_m} \le \frac{r_m^{\beta/8}}{\theta_m} \le r_m^{\beta/8}
\end{equation}
if we choose $\sigma \le \beta/ 8$ and we let $m$ large enough (recall $\alpha_{k_m}\to \alpha_\infty$). In particular, taking $m\to \infty$, the limiting function $v_\infty$ is two dimensional, and 
\[
v_\infty(x) = \tilde v_\infty(\nu_\infty\cdot x', x_n)
\]
 for some $\tilde v_\infty:\R^2\to \R$. 

{\it Step 6: Higher order estimates.} Let us fix some $m\in \N$, and let us assume without loss of generality that $\nu_{k_m} = \be_1$. 

Notice that $v_m$ is locally $C^{2,\alpha}$ outside of $U_m^+ = \Omega_{k_m}'\cup\{x_n = 0, x_1 \le 0\}$, in particular, it satisfies an equation with bounded measurable coefficients  there:
\[
\sum_{i, j = 1}^n a^m_{ij}(x) \partial_{ij} v_m(x) = 0 \quad\text{in}\quad B_1\setminus U_m^+,
\]
where $A_m(x) = (a^m_{ij}(x))_{i,j = 1}^n \in \mathcal{M}_n^S$ is uniformly elliptic with ellipticity constants $\lambda$ and $\Lambda$.  Proceeding as in the proof of Lemma~\ref{lem.divergenceform} (see \cite[Chapter 4]{FR21}) we can differentiate with respect to $\be_1$ and rewrite it as 
\begin{equation}
\label{eq.maineq}
{\rm div}_{1,n}\left(\tilde A_m(x) \nabla_{1,n} \partial_1 v_m \right) = -\partial_1\left\{\sum_{j = 2}^{n-1} \left(\sum_{i = 1}^n \frac{a^m_{ij}(x)}{a^m_{nn}(x)} \partial_ i \right) \partial_{j} v_m(x)\right\}  \quad\text{in}\quad B_1\setminus U_m^+,
\end{equation}
where 
\[
\tilde A_m(x) = \begin{pmatrix}
a^m_{11}(x) / a_{nn}^m(x) &  2a_{1n}^m(x) / a^m_{nn}(x)\\
0 & 1
\end{pmatrix} \in \mathcal{M}_2
\]
is uniformly elliptic, and where we can divide by $a^m_{nn}(x)$ since $A_m(x)$ is uniformly elliptic.  We have denoted here ${\rm div}_{1,n} w= \partial_1w+\partial_nw$ and $\nabla_{1,n} w = (\partial_1 w, \partial_n w)$. Notice, also, that all coefficients $a^m_{ij}(x) / a^m_{nn}(x)$ are uniformly bounded. 

If we fix $x_2 = \dots  = x_{n-1} =0$ in \eqref{eq.maineq}, and we denote 
\[
\tilde v_m(x_1, x_n) := v_m(x_1,0,\dots,0, x_n),
\]
then
\begin{equation}
\label{eq.tildef}
{\rm div}(\tilde A_m \nabla \partial_1 \tilde v_m) = \partial_1 \tilde f_m (x)\quad \text{in}\quad B_1\setminus \{x_n = 0, x_1 \le 0\} \subset \R^2,
\end{equation}
for some $\tilde f_m$. We will show $\tilde f_m\in L^p$ for some $p > 2$, independently of $m$. 

Let $x_\circ\in \{x_2 = \dots = x_{n-1} = 0\}\setminus \{x_n = 0, x_1\le 0\}$, and let $\rho := |x_\circ|$; we want to bound $|\nabla \partial_j u_{k_m}|(x_\circ)$ in terms of $\rho$, for $j \in \{2,\dots, n-1\}$. Notice that, on the one hand, we know
\begin{equation}
\label{eq.first}
\rho\|\partial_j u_{k_m}\|_{L^\infty(B_{\rho/8}(x_\circ))} \le C \rho^{1+\alpha_{k_m} +\beta/2}\quad \text{in}\quad B_{r_m},
\end{equation} 
from  \eqref{eq.fromharnack}.

On the other hand, if $\eta$ is universally small enough, we can either assume that $B_{\rho/4}(x_\circ) \subset B_1\setminus \Omega'_{k_m}$ or that $x_\circ \in B_{\rho/4}(x_\circ')$ with $x_\circ' = ((x_\circ)_1,0,\dots, 0)$ and $B_{\rho/2}(x_\circ')\cap \partial \Omega_{k_m}' = \varnothing$. 

If the first case holds, then by interior estimates for convex fully nonlinear equations on $u_{k_m}$, \cite{CC95}, we obtain that
\begin{equation}
\label{eq.second}
\rho^{2+\tilde \gamma} [\nabla \partial_j u_{k_m}]_{C^{\tilde\gamma}(B_{\rho/8}(x_\circ))} \le C \|u_{k_m}\|_{L^\infty(B_\rho(x_\circ))} \le C\rho^{1+\alpha_{k_m} -\eps}
\end{equation}
for some $C$ and any $\tilde\gamma > 0$ small enough depending only on $F$, and $\eps$, and where we are also using the almost optimal regularity for $u_{k_m}$, Proposition~\ref{prop.opt}. 

We can now interpolate \eqref{eq.first} and \eqref{eq.second} (see, for example, \cite[Section 6.8]{GT83}) and fix $\eps = \beta \tilde \gamma /4$ to obtain
\begin{equation}
\label{eq.boundvm}
|\nabla \partial_j u_{k_m}|(x_\circ) \le \|\nabla \partial_j u_{k_m}\|_{L^\infty(B_{\rho/8}(x_\circ))} \le C |x_\circ|^{\alpha_{k_m}-1 + \frac{\beta\tilde \gamma}{4(1+\tilde\gamma)}}
\end{equation}
for any $j\in \{2,\dots,n-1\}$. On the other hand, if the second case holds  (i.e. $x_\circ \in B_{\rho/4}(x_\circ')$ with $x_\circ' = ((x_\circ)_1,0,\dots, 0)$ and $B_{\rho/2}(x_\circ')\cap \partial \Omega_{k_m}' = \varnothing$), the same result \eqref{eq.boundvm} is obtained by means of boundary regularity estimates instead of interior estimates for convex fully nonlinear equations. 

Applying the previous bound \eqref{eq.boundvm} to $v_m$ we obtain that, given a point $\bar x_\circ\in \{x_2= \dots= x_{n-1} = 0\}$ and $j\in \{2,\dots, n-1\}$ (recall $\partial_j u_0^{(k_m)} \equiv 0$)
\[
|\nabla \partial_j v_m|(\bar x_\circ) \le r_m^{\tilde \beta/4} |\bar x_\circ|^{\alpha_{k_m} - 1 +\tilde \beta}
\]
if $\sigma$ is small enough  depending on $\beta$ and $\tilde \gamma$, and where $\tilde \beta$ is small (again depending on $\beta$ and $\tilde\gamma$).  In particular, we have 
\[
|\nabla \partial_j v_m|(x_1,0,\dots,0, x_n) \in L^p(B_1),\quad B_1\subset \R^2,\quad j\in \{2,\dots,n-1\},
\]
for some $p > 2$. Hence, after recalling \eqref{eq.maineq}, we get that, in \eqref{eq.tildef},  $\tilde f_m\in L^p_{\rm loc}(\R^2)$ and 
\[
\|\tilde f_m\|_{L^p(B_1)}\le Cr_m^{\tilde \beta /4}
\] for some $C$ independent of $m$, and some $p > 2$.

On the other hand, since $v_m$ is the difference of two solutions to a thin obstacle problem, it has interior $C^{2,\alpha}$ regularity (that may depend on $m$). Proceeding as before (to get \eqref{eq.boundvm}) we have that 
\[
|\nabla \partial_1 u_{k_m}|(x_\circ) \le \|\nabla \partial_j u_{k_m}\|_{L^\infty(B_{\rho/8}(x_\circ))} \le C |x_\circ|^{\alpha_{k_m}-\eps -1}
\]
and by homogeneity the same holds for $\phi_m$, for a constant $C$ that depends on $m$. In particular, 
\[
\|\nabla \partial_1 \tilde v_m\|_{L^p(B_1)}\le C_m,
\]
for some $C_m \uparrow \infty$ as $m\to \infty$. Thus, $\nabla \partial_1 \tilde v_m\in L^p_{\rm loc}(\R^2)$ for some $p> 2$, and we can apply Lemma~\ref{lem.weird_reg_est}. 

That is, by the regularity estimates from Lemma~\ref{lem.weird_reg_est} we get that that $\partial_1\tilde v_m \in C^{\delta}(B_1)$ for some $\delta > 0$ with estimates of the form 
\[
[\partial_1\tilde v_m]_{C^\delta(B_{1/2})} \le C\left( \|\partial_1\tilde v_m\|_{L^\infty(B_1)} + 1\right) 
\]
for some $C$ depending only on $F$, and $\alpha$. Using interpolation (see \cite{GT83}) and a standard covering argument to reabsorb the right-hand side (see for example \cite[Lemma 2.26]{FR21}) we now get that 
\[
[\partial_1\tilde v_m]_{C^\delta(B_{1/2})}\le C  (\|\tilde v_m\|_{L^\infty(B_1)} +1) \le C.
\]
Recall that $\tilde v_m(0) = |\nabla \tilde v_m(0)| = 0$. In particular, $v_m$ is uniformly $C^{1,\delta}$ for some $\delta > 0$ on $\{x_2 =\dots = x_{n-1} = x_n = 0\}$. That is, the limiting function $v_\infty$ is $C^{1,\delta}$ on $\{x_2 = \dots = x_{n-1} = x_n = 0\}$. 

Since $v_\infty$ satisfies an equation, \eqref{eq.nondiv_eq} with $\beta = 0$, and it is two-dimensional, boundary regularity estimates imply that $v_\infty$ is $C^{1,\delta}$ in each part $\{\pm x_1 > 0\}$, by taking $\delta$ smaller if necessary.
\\[0.1cm]
{\it Step 7: Conclusion.} Notice that, in the previous step, \eqref{eq.tildef} also holds exchanging $\tilde v_m$ by $\tilde v_m - \beta \tilde u_0^{(k_m)}$ for any fixed $\beta > 0$, and for any ball $B_R$ with $R \le \frac{1}{2r_m}$,
\[
{\rm div}\left(\tilde A_{m, \beta} \nabla \left(\partial_1 \tilde v_m - \beta \partial_1 \tilde u_0^{(k_m)}\right)\right) = \partial_1 \tilde f_m(x)\quad \text{in}\quad B_R\setminus \{x_n = 0, x_1 \le 0\},
\]
where $\|\tilde f_m\|_{L^p(B_{R})}\le C(R) r_m^{\tilde \beta/4}$, and $\tilde A_{m, \beta}$ is uniformly elliptic with ellipticity constants depending on $\lambda$ and $\Lambda$. 

In particular, we can take the limit $m\to\infty$ and observe that $f_m\downarrow 0$ and that, by homogenization theory, there exists $A^*_\beta(x)$ uniformly elliptic with ellipticity constants depending only on $\lambda$ and $\Lambda$ such that $\tilde A_{m,\beta}$ $H$-converge to $A^*_\beta$ up to subsequences (see \cite[Theorem 1.2.16, Proposition 1.2.19]{All02}). That is, the limit $\partial_1 \tilde v_\infty - \beta \partial_1 \tilde u_0^{(\infty)}$ satisfies 
\[
{\rm div} \left( A^*_\beta\left( \partial_1 \tilde v_\infty - \beta \partial_1 \tilde u_0^{(\infty)}\right) \right) = 0 \quad \text{in}\quad B_R\setminus \{x_n = 0, x_1 \le 0\}.
\]
Since this is true for any $\beta > 0$, we can proceed exactly as in the proof of Proposition~\ref{prop.uniqueness} to deduce that $\partial_1 \tilde v_\infty = \bar \beta \partial_1 \tilde u_0^{(\infty)}$ everywhere for some $\bar\beta \ge 0$ (by means of the boundary Harnack, Theorem~\ref{thm.boundaryHarnack}). That is, $\tilde v_\infty -\bar\beta \tilde u_0^{(\infty)} = g(x_n)$. Recalling that, outside of the contact set, $\tilde v_\infty -\bar\beta \tilde u_0^{(\infty)} $ satisfies an equation in non-divergence form with measurable coefficients, we deduce that $g(x_n) = \kappa x_n$ for some $\kappa\in \R$. 

In all, we have 
\[
\tilde v_\infty(x_1, x_n) = \bar\beta \tilde u_0^{(\infty)}(x_1, x_n) + \kappa x_n.
\]
Passing to the limit \eqref{eq.ort_cond} we deduce that $\bar\beta = \kappa = 0$ (we also use \eqref{eq.diff}, so that this is the only solution), and hence $\tilde v_\infty(x_1, x_n) \equiv 0$, contradicting \eqref{eq.contradict} in the limit. 
\end{proof}

As a consequence of the previous proposition, we directly have that there is always an expansion around regular points, where the first term is the (unique) blow-up at the point.

\begin{proof}[Proof of Theorem~\ref{thm.expansion}]
We just directly apply Proposition~\ref{prop.opt_reg} at a sufficiently small scale, where \eqref{eq.closeness} holds by uniqueness of blow-ups. 
\end{proof}

\section{Optimal regularity}
\label{sec.5}

The ideas of this section are based on those developed in \cite{FRS23} (see also \cite[Section~4.5]{Book-nonlocal}). 

Let us start by proving the following lemma, that will be crucial in the proof of the optimal regularity.

\begin{lem}
\label{lem.first}
Let $F$ satisfy \eqref{eq.F}. For every $\eps > 0$ there exists $\delta > 0$ depending only on $\eps$ and $F$ such that the following statement holds.

Let $u$ solve a fully nonlinear thin obstacle problem in $B_{1/\delta}$ with operator $F$, and let us assume that 0 is a free boundary point with $\nabla u(0) = 0$. Namely, 
\begin{equation}
  \label{eq.thinobst_delta}
  \left\{ \begin{array}{rcll}
  F(D^2u)&=&0& \textrm{ in } B_{1/\delta}\setminus \{x_n = 0, u=0\}\\
  F(D^2u)&\leq&0& \textrm{ in } B_{1/\delta}\\
  u&\geq&0& \textrm{ on } B_{1/\delta}\cap\{x_n=0\}. \\
  \end{array}\right.
\end{equation}

Let us also suppose 
\[
|F(A) - F^*(A)|\le \delta (1+\|A\|),\quad\text{for all}\quad A \in \mathcal{M}^S_n,
\]
where $F^*$ is the recession function of $F$, 
and
\begin{equation}
\label{eq.hyp2}
D^2_{x'} u \ge -\delta {\rm Id}\quad\text{in $B_{1/\delta}$} \quad \text{and}\quad \|u\|_{L^\infty(B_R)}\le R^{2-\eps_\circ}\quad\text{for all}\quad 1\le R\le 1/\delta, 
\end{equation}
where $\eps_\circ$ is small depending only on $F$ (as defined in \eqref{eq.reg_points}).

Then we have
\begin{equation}
\label{eq.concl_lem}
\|u - \|u \|_{L^\infty(B_1)} u_0\|_{C^1(B_1^+\cup B_1^-)} \le \eps,
\end{equation}
where $u_0$ is a regular blow-up for the operator $F$ with $\|u_0\|_{L^\infty} = 1$ (as constructed in Lemma~\ref{lem.existence}, with $G$ of the form \eqref{eq.givenby} for $F$). 
\end{lem}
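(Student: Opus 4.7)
My plan is a compactness and contradiction argument. Suppose the statement fails for some $\eps>0$: there exist $\delta_k\downarrow 0$, operators $F_k$ satisfying \eqref{eq.F} with $|F_k(A)-F_k^*(A)|\le \delta_k(1+\|A\|)$, and solutions $u_k$ of \eqref{eq.thinobst_delta} in $B_{1/\delta_k}$ verifying \eqref{eq.hyp2} with constant $\delta_k$, $\nabla u_k(0)=0$, and $0$ a free boundary point, for which
\[
\bigl\|u_k - M_k\, U_k^{(e)}\bigr\|_{C^1(B_1^+\cup B_1^-)} \ge \eps \qquad\text{for every } e\in\mathbb{S}^{n-2},
\]
where $M_k:=\|u_k\|_{L^\infty(B_1)}$ and $U_k^{(e)}$ denotes the $\|\cdot\|_{L^\infty(B_1)}=1$-normalized homogeneous solution of Lemma~\ref{lem.existence} applied to the operator given by \eqref{eq.givenby} for $F_k$.

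First I would extract subsequential limits. The $C^{1,\eps_\circ}$ estimates of \cite{Fer16} and the growth bound $\|u_k\|_{L^\infty(B_R)}\le R^{2-\eps_\circ}$ on $R\le 1/(2\delta_k)$, together with a diagonal extraction, give $u_k\to u_\infty$ in $C^1_{\rm loc}(\overline{\R^n_+})\cap C^1_{\rm loc}(\overline{\R^n_-})$ along a subsequence. Uniform ellipticity makes $(F_k)$ equi-Lipschitz on bounded sets, so a further subsequence yields $F_k\to F_\infty$ locally uniformly on $\mathcal{M}^S_n$; the hypothesis $|F_k-F_k^*|\to 0$ together with the $1$-homogeneity of each $F_k^*$ forces $F_\infty$ to be $1$-homogeneous and to satisfy \eqref{eq.F}. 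Extract also $M_k\to M_\infty\in[0,1]$. Passing the equations and the pointwise bounds to the limit, $u_\infty$ is a global solution of the thin obstacle problem in $\R^n$ with operator $F_\infty$, satisfying $u_\infty(0)=|\nabla u_\infty(0)|=0$, $D^2_{x'}u_\infty\ge 0$, and $\|u_\infty\|_{L^\infty(B_R)}\le R^{2-\eps_\circ}$ for all $R\ge 1$.

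If $M_\infty=0$, then $u_k\to 0$ uniformly on $B_1$; an interpolation with the uniform $C^{1,\eps_\circ}$ bound upgrades this to $\|u_k\|_{C^1(B_1^\pm)}\to 0$, while Lemma~\ref{lem.existence} gives a uniform $C^1(B_1^\pm)$ bound on $U_k^{(e)}$. Hence $\|u_k-M_k U_k^{(e)}\|_{C^1(B_1^\pm)}\to 0$ for any fixed $e$, contradicting the standing lower bound $\eps$.

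The main step (and the main obstacle) is $M_\infty>0$, in which I would identify $u_\infty$ with a multiple of a blow-up. By the $2$D reduction for $x'$-convex global subquadratic solutions, carried out in the proof of Lemma~\ref{lem.global2D} following \cite[Proposition 5.2]{RS17}, one has $u_\infty(x)=\widehat U_\infty(x'\cdot e, x_n)$ for some $e\in\mathbb{S}^{n-2}$; moreover, convexity in $x_1$ together with $u_\infty(0)=|\nabla u_\infty(0)|=0$ gives $\partial_1\widehat U_\infty\ge 0$ and the half-line vanishing $\widehat U_\infty\equiv 0$ on $\{x_1\le 0,\,x_2=0\}$, exactly as in the proof of Lemma~\ref{lem.global2D}. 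Therefore $\widehat U_\infty$ verifies \eqref{eq.global2D}-\eqref{eq.monotone_convex} for the $1$-homogeneous operator \eqref{eq.givenby} applied to $F_\infty$, and Proposition~\ref{prop.uniqueness} forces $\widehat U_\infty=M_\infty\cdot u_0$, with $u_0$ the $\|\cdot\|_{L^\infty(B_1)}=1$-normalized homogeneous solution of Lemma~\ref{lem.existence}; lifting back, $u_\infty=M_\infty U_\infty^{(e)}$. Applying Proposition~\ref{prop.uniqueness} again along $F_k\to F_\infty$ produces $U_k^{(e)}\to U_\infty^{(e)}$ in $C^1_{\rm loc}$ by compactness and uniqueness, whence $M_k U_k^{(e)}\to u_\infty$ in $C^1(B_1^\pm)$ and $\|u_k-M_k U_k^{(e)}\|_{C^1(B_1^\pm)}\to 0$, contradicting the standing assumption for this choice of $e$. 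The delicate technical point is the $2$D reduction together with the transfer of the monotonicity and the half-line vanishing to the limit, which is exactly what allows Proposition~\ref{prop.uniqueness} to apply to $\widehat U_\infty$.
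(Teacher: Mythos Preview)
Your proof is correct and follows essentially the same compactness--contradiction scheme as the paper: pass to a limit $u_\infty$ that is $x'$-convex, globally subquadratic, and solves the thin obstacle problem for a $1$-homogeneous limit operator, then invoke the classification of such global solutions from \cite[Theorem~4.1]{RS17} together with the uniqueness of blow-ups (Proposition~\ref{prop.uniqueness}) to reach a contradiction. Your explicit case split on $M_\infty=0$ versus $M_\infty>0$ and the separate verification that $U_k^{(e)}\to U_\infty^{(e)}$ are natural elaborations of the paper's terse argument, not a different strategy.
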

\begin{proof}
Arguing by contradiction, suppose that $u_k$ satisfy the hypotheses with $\delta = 1/k$ but the thesis fails for some $\eps > 0$. Then $u_k$ converges to some $u_\infty$ convex in the directions parallel to the thin space and satis\-fying a fully nonlinear thin obstacle problem globally with some operator $F_\infty$ 1-homogeneous.  We then get a contradiction by the classification of global subquadratic and convex solutions \cite[Theorem 4.1]{RS17} and the uniqueness of blow-ups.
\end{proof}

The following is a continuation of the previous lemma, where we further assume a non-degenerate blow-up. 

\begin{lem}
\label{lem.second}
Let $F$ satisfy \eqref{eq.F} and be 1-homogeneous. There exists a universal $\delta_\circ$ depending only on $F$ such that if $u$ satisfies \eqref{eq.thinobst_delta}-\eqref{eq.hyp2} with $\delta \le \delta_\circ$, and $\|u \|_{L^\infty(B_1)}\ge \frac12$, then 
\begin{equation}
\label{eq.boundexp}
|u(x) |\le C |x|^{1+{\alpha_F}}\quad\text{for all $x\in B_1$},
\end{equation}
for some $C$ depending only on $F$, and where ${\alpha_F}$ is given by \eqref{eq.alphan_opt}. 
\end{lem}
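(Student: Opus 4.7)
The goal is to reduce the growth bound \eqref{eq.boundexp} to the quantitative expansion Proposition~\ref{prop.opt_reg}. Since $F$ is 1-homogeneous, replacing $u$ by $u/\|u\|_{L^\infty(B_1)}\in[1,2]$ we may assume $\|u\|_{L^\infty(B_1)}=1$; both hypotheses and conclusion are preserved up to factors depending only on $F$.

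First, I would show that $0$ is a regular free boundary point and that the contact set is $C^{1,\alpha}$ with small norm. Lemma~\ref{lem.first}, applied with $\delta_\circ$ small enough, yields $C^1(B_1^+\cup B_1^-)$-closeness of $u$ to a regular blow-up $u_0$ with $\|u_0\|_{L^\infty(B_1)}=1$. Since $u_0$ is $(1+\alpha_F(\nu))$-homogeneous with $\alpha_F(\nu)\in(0,1)$ (Theorem~\ref{thm.uniq_blowup}) and strictly positive on the sector $\{x\cdot\nu>0,\,x_n=0\}$ (by the construction in Lemma~\ref{lem.existence}), the closeness transfers a one-sided monotonicity to $u$ in a tangential direction $e$, so by Lemma~\ref{lem.monotone_regular}, $0$ is a regular point and the contact set is $C^{1,\alpha}$ with norm tending to $0$ as $\delta_\circ\downarrow 0$.

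Next, I would propagate the closeness to all small scales. Because $F$ is 1-homogeneous, the rescaling $\tilde u_r(x):=u(rx)/\|u\|_{L^\infty(B_r)}$ again solves \eqref{eq.thinobst_delta} with the same $F$ on $B_{1/(r\delta_\circ)}$. Bootstrapping dyadically, at each scale I verify the hypotheses of Lemma~\ref{lem.first} for $\tilde u_r$ using: (i) the lower growth $\|u\|_{L^\infty(B_r)}\ge c\,r^{1+\alpha_F(\nu)}$ obtained from closeness at the previous scale, which makes the convexity defect $\delta_\circ r^2/\|u\|_{L^\infty(B_r)}$ tend to $0$; and (ii) the upper growth in \eqref{eq.hyp2}, which is preserved under rescaling provided $\eps_\circ<1-\alpha_F(\nu)$ (achievable by taking $\eps_\circ$ small depending on $F$, since $\alpha_F(\nu)<1$). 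Lemma~\ref{lem.first} then provides closeness of $\tilde u_r$ to some regular blow-up, and the uniqueness of blow-ups at $0$ (Corollary~\ref{cor.mainprop}) forces it to coincide with $u_0^\nu$. This yields a uniform bound $\|\tilde u_r-u_0^\nu\|_{L^\infty(B_1)}\le\eta$ for all $r\le r_0$, as well as the $C^{1,\alpha}$-smallness of the contact set, with $r_0$ and $\eta$ depending only on $F$.

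Rescaling $u$ by $r_0$ now places us in the setting of Proposition~\ref{prop.opt_reg}, whose conclusion, after undoing the rescaling, gives
\[
|u(x)-Q\,u_0^\nu(x)|\le C|x|^{1+\alpha_F(\nu)+\sigma} \qquad\text{in } B_{r_0},
\]
with $Q,C,\sigma>0$ depending only on $F$. Using the homogeneity and $\|u_0^\nu\|_{L^\infty(B_1)}=1$, together with $\alpha_F(\nu)\ge\alpha_F$, we obtain $|u(x)|\le C'|x|^{1+\alpha_F(\nu)}\le C'|x|^{1+\alpha_F}$ for $|x|\le r_0$; for $|x|\in[r_0,1]$, the estimate follows trivially from $|u|\le 1\le r_0^{-(1+\alpha_F)}|x|^{1+\alpha_F}$.

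The main obstacle is the scale-by-scale bootstrap: I must simultaneously control the lower growth bound (which feeds the hypotheses of Lemma~\ref{lem.first}) and the $C^1$-closeness it outputs, and crucially invoke the uniqueness of blow-ups from Corollary~\ref{cor.mainprop} to identify the approximating blow-up at every scale with the same $u_0^\nu$. Without this, one only obtains closeness to some regular blow-up at each scale, which is insufficient for Proposition~\ref{prop.opt_reg}.
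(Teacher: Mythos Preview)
Your proposal is essentially correct and follows the same overall strategy as the paper: apply Lemma~\ref{lem.first} to get $C^1$-closeness to a blow-up, deduce that $0$ is regular with $C^{1,\alpha}$ free boundary, propagate closeness to all scales $r<1$, and then invoke Proposition~\ref{prop.opt_reg}. The one structural difference is how non-degeneracy (the lower growth of $\|u\|_{L^\infty(B_r)}$) is obtained. The paper first establishes that the free boundary is $C^{1,\alpha}$ in a neighborhood and then uses a barrier argument from \cite{RS17} to get a uniform non-degeneracy bound $\|u_\rho\|_{L^\infty(B_{\bar r})}\ge C\bar r^{2-\beta_\circ}$ directly; this cleanly verifies the hypotheses of Lemma~\ref{lem.first} at every scale without iteration. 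Your dyadic bootstrap achieves the same end, but note a small imprecision: iterating the one-step bound $\|u\|_{L^\infty(B_{r/2})}\ge(2^{-1-\alpha_F(\nu)}-\eps)\|u\|_{L^\infty(B_r)}$ only gives $\|u\|_{L^\infty(B_r)}\ge c\,r^{1+\alpha_F(\nu)+\eta(\eps)}$ with $\eta(\eps)>0$, not the sharp exponent $1+\alpha_F(\nu)$ you state. This is harmless here since any subquadratic lower bound suffices for both the semiconvexity and growth hypotheses of Lemma~\ref{lem.first}, provided $\eps_\circ$ is chosen smaller than $1-\alpha_F(\nu)-\eta(\eps)$.

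One further remark: your appeal to Corollary~\ref{cor.mainprop} to identify the approximating blow-up at each scale with $u_0^\nu$ is not quite self-contained. That corollary gives $\lim_{r\to 0}\tilde u_r=u_0^\nu$, but does not by itself pin down the blow-up produced by Lemma~\ref{lem.first} at a fixed scale. The paper closes this by observing that the $C^1$-closeness forces the normal of the approximating blow-up to be close to $\nu$ (since the free boundary of $u$ is already known to be $C^{1,\alpha}$ with small norm and normal $\nu$ at $0$), and then using continuity of $e\mapsto u_0^{(e)}$. You have all the ingredients for this, but the argument should be phrased through the free boundary geometry rather than through uniqueness of blow-ups alone.
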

\begin{proof}
Let $\eps > 0$ to be chosen, and let $\delta_\circ := \delta(\eps)$ given by Lemma~\ref{lem.first}, which will become universal once $\eps > 0$ is fixed. 

Then, since $\|u \|_{L^\infty(B_1)}\ge \frac12$, we will show that 0 is a regular free boundary point, the free boundary is $C^{1,\alpha}$ in $B_{\frac12}$ with $C^{1,\alpha}$ norm bounded by $C \eps$, and 
\begin{equation}
\label{eq.onlytocheck}
\left\|\frac{u(rx)}{\|u\|_{L^\infty(B_r)}} -  u_0(x)\right\|_{L^\infty(B_1)}\le \eps \quad\text{for all}\quad r < 1,
\end{equation}
where $u_0$ is a unit norm blow-up at 0 given by Corollary~\ref{cor.mainprop}. We now fix $\eps$ small enough (depending only on $F$), such that the hypotheses of Proposition~\ref{prop.opt_reg} hold. In particular, we have an expansion of the form 
\[
|u(x) - Q u_0^{(\nu)}(x)| \le C |x|^{1+\alpha_\nu + \sigma/2}\quad\text{in}\quad B_1
\]
for some $Q$ bounded depending only on $F$. Since ${\alpha_F} \le \alpha_\nu$ by definition and $u_0^{(\nu)}$ is $1+\alpha_\nu$ homogeneous and has norm 1, the bound \eqref{eq.boundexp} directly holds. 

Let us then show that indeed 0 is a regular free boundary point, the free boundary is $C^{1,\alpha}$ in $B_{\frac12}$ with $C^{1,\alpha}$ norm bounded by $C \eps$, and \eqref{eq.onlytocheck} holds.

From 
\eqref{eq.concl_lem}, the fact that 0 is a regular point  and the free boundary regularity around it are standard once we have a boundary Harnack for slit domains (cf. \cite[Section 5]{Fer21}). Indeed, suppose that $u_0 = u_0(x_1, x_n)$ (hence the normal vector to the free boundary is $\nu = \be_1$). Then, if $\eps$ is small enough, a standard argument 
  gives that $\partial_1 u \ge 0$ in $B_{1/2}$ (for example, using \cite[Lemma 5]{ACS08}). The same holds for directions in a cone around $\be_1$, so that in fact the free boundary is Lipschitz, with Lipschitz constant arbitrarily small (depending on $\eps> 0$), cf. \cite[Proposition 5.1]{RT21}. 

On the other hand, let $e\in\mathbb{S}^{n-1}\cap\{x_n = 0\}$ be such that $e\cdot \be_1 = 0$. Then $\partial_e u_0 = 0$ and therefore, by assumption, $\|\partial_e u\|_{L^\infty(B_1)}\le \eps$. By applying now the boundary Harnack for slit domains for equations in non-divergence form, \cite[Theorem 4.2]{RT21}, we get that 
\[
\left\|\frac{\partial_e u}{\partial_1 u}\right\|_{C^\alpha(B_{1/2})} \le C\eps.
\]
This gives that the free boundary is $C^{1,\alpha}$ with norm bounded by $\eps$ (as in \cite[Theorem 5.3]{Fer21}). 
%
%
Observe, also, that we can choose $u_0$ to be arbitrarily close (depending on $\eps$) to the blow-up at 0, $u_0^{(\nu)}$, where $\nu$ is the unit outward normal at the contact set at the origin, and $u_0^{(\nu)}$ is defined as in the proof of Corollary~\ref{cor.mainprop}.  

Thus, there only remains to show \eqref{eq.onlytocheck}. Let us define $u_\rho(x) := u(\rho x) /\|u\|_{L^\infty(B_\rho)}$ for $\rho < 1$. Observe that $u_\rho$ also satisfies a fully nonlinear thin obstacle problem with the same $F$ (since it is 1-homogeneous), has norm 1, and in a uniform ball around the origin the free boundary is formed exclusively of regular points. Proceeding by a barrier argument from below (as in \cite[Proposition 7.1]{RS17} to get a non-degeneracy condition) we deduce that 
\begin{equation}
\label{eq.touse}
\frac{\|u\|_{L^\infty(B_{\rho \bar r})}}{\|u\|_{L^\infty(B_{\rho})}} = \|u_\rho\|_{L^\infty(B_{\bar r})}\ge C {\bar r}^{2-\beta_\circ}\quad\text{for all $\bar r < 1$},
\end{equation}
for some $\beta_\circ > 0$ depending only on $F$. 

Let us now consider $u_{r}$ for $r < 1$. Notice that $u_r$ satisfies \eqref{eq.thinobst_delta}. Moreover, 
\[
D^2_{x'} u_r = \frac{r^2}{\|u\|_{L^\infty(B_r)}} (D^2 u) (rx) \ge -\delta{\rm Id}\quad\text{in}\quad B_{1/\delta}
\]
where we are using \eqref{eq.touse} with $\rho = 1$. On the other hand, if $Rr \ge 1$,
\[
\|u_r\|_{L^\infty(B_R)} = \frac{\|u\|_{L^\infty(B_{rR})}}{\|u\|_{L^\infty(B_r)} } \le \frac{CR^{2-\eps_\circ} r^{2-\eps_\circ}}{\|u\|_{L^\infty(B_r)} } \le CR^{2-\eps_\circ} r^{\beta_\circ-\eps_\circ} \le R^{2-\eps_\circ},
\]
where we are taking $\eps_\circ$ smaller if necessary (depending only on $F$), and we are using again \eqref{eq.touse} with $\rho = 1$; and if $Rr < 1$ with $R>1$, 
\[
\|u_r\|_{L^\infty(B_R)} =  \frac{\|u\|_{L^\infty(B_{rR})}}{\|u\|_{L^\infty(B_r)} } \le CR^{2-\beta_\circ} \le R^{2-\eps_\circ}
\]
where we are using \eqref{eq.touse} with $\bar r = \frac{1}{R}$ and $\rho = R r$. In all, $u_r$ satisfies the hypotheses of Lemma~\ref{lem.first}, and so 
(since $u_r$ has norm 1) we have, for all $r < 1$,
\[
\|u_r -  u_0\|_{L^\infty(B_1)}\le \eps.
\]
By taking $\eps$ smaller if necessary, we can assume, as before, that $ u_0 = u_0^{(\nu)}$ is the same for all $r < 1$, where $u_0^{(\nu)}$ is the unique blow-up at zero, with homogeneity $1+\alpha_\nu$. Thus, \eqref{eq.onlytocheck} holds.
\end{proof}

We can now prove the optimal regularity in the 1-homogeneous case.

\begin{thm}
\label{thm.opt5}
Let $u$ be a solution to the fully nonlinear obstacle problem, \eqref{eq.thinobst}, with operator $F$ given by \eqref{eq.F} and 1-homogeneous, and let us assume that 0 is a free boundary point.  Let ${\alpha_F}\in (0, 1)$ be given by \eqref{eq.alphan_opt}. 

Then, $u\in C^{1,{\alpha_F}}(B_1^+)\cap   C^{1,{\alpha_F}}(B_1^-)$ and
\[
\|u\|_{C^{1+{\alpha_F}}(B_{1/2}^+)}+\|u\|_{C^{1+{\alpha_F}}(B_{1/2}^-)} \le C \|u\|_{L^\infty(B_1)}
\]
for some $C$ depending only on $F$. 
\end{thm}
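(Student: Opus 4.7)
The strategy is to combine a uniform pointwise growth estimate at every free boundary point with interior $C^{2,\alpha}$ estimates of Caffarelli--Cabr\'e \cite{CC95} for $F(D^2 u)=0$ away from the contact set. It suffices to prove the following: there exists $C$, depending only on $F$, such that for every free boundary point $x_\circ\in \partial\{u=0\}\cap\{x_n=0\}\cap B_{1/2}$,
\[
\bigl|u(x) - \nabla u(x_\circ)\cdot(x-x_\circ)\bigr| \le C\|u\|_{L^\infty(B_1)}\,|x-x_\circ|^{1+\alpha_F} \qquad \text{for all } x\in B_{1/2}.
\]
After this is established, a standard scaling argument (choosing $r\sim \mathrm{dist}(x,\{u=0\}\cap\{x_n=0\})$ and using the pointwise bound at the nearest contact point together with interior $C^{2,\alpha}$ estimates on $B_{r/2}(x)$) yields the $C^{1,\alpha_F}$ bound on each side $B_{1/2}^\pm$. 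By subtracting the linear function $\nabla u(x_\circ)\cdot(x-x_\circ)$, which solves the same thin obstacle problem up to a translation, we reduce to the case $|\nabla u(x_\circ)|=0$, and we may further normalize $\|u\|_{L^\infty(B_1)}=1$.

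\emph{Regular points.} Suppose $x_\circ$ is regular in the sense of Definition~\ref{defi.regular}. The plan is to apply Lemma~\ref{lem.second} to the rescaled function
\[
u_\rho(y) := \frac{u(x_\circ+\rho y)}{\|u\|_{L^\infty(B_\rho(x_\circ))}},
\]
for a sufficiently small $\rho$ (depending on the point but yielding uniform conclusions). The hypotheses of Lemma~\ref{lem.second} are verified as follows: (i) $u_\rho$ solves the thin obstacle problem for $F$ in $B_{1/\rho}$ by the $1$-homogeneity of $F$; (ii) $\|u_\rho\|_{L^\infty(B_1)}=1\ge\tfrac12$; (iii) tangential semiconvexity with the constant $\delta\le\delta_\circ$ follows from the universal semiconvexity $D^2_{x'}u\ge -C\,\mathrm{Id}$ of \cite{Fer16} combined with the nondegeneracy $\|u\|_{L^\infty(B_\rho(x_\circ))}\ge c\rho^{2-\eps_\circ}$ at regular points from \cite{RS17}, giving $D^2_{x'}u_\rho\ge -C\rho^{\eps_\circ}\,\mathrm{Id}$; (iv) the growth bound $\|u_\rho\|_{L^\infty(B_R)}\le R^{2-\eps_\circ}$ for $1\le R\le 1/\rho$ follows from the expansion of Theorem~\ref{thm.expansion}, which yields $\|u\|_{L^\infty(B_r(x_\circ))}\asymp c_0 r^{1+\alpha_\nu}$ for small $r$ with $\alpha_\nu \ge \alpha_F$ and $c_0$ bounded in terms of $\|u\|_{L^\infty(B_1)}$. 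Conclusion of Lemma~\ref{lem.second} gives $|u_\rho(y)|\le C|y|^{1+\alpha_F}$ in $B_1$, and unscaling together with the bound $c_0\le C$ delivers the desired estimate in $B_\rho(x_\circ)$. For $|x-x_\circ|\ge\rho$, one controls $u$ simply by $\|u\|_{L^\infty(B_1)}=1$, up to a constant depending only on $\rho$ (which in turn is universally bounded from below thanks to the uniformity of the hypotheses in Lemma~\ref{lem.second}).

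\emph{Degenerate points.} If $x_\circ$ is degenerate, by the dichotomy recalled after \eqref{eq.reg_points} we have $\|u\|_{L^\infty(B_r(x_\circ))}\le C_\eps r^{2-\eps}$ for every $\eps>0$. Choosing $\eps = 1-\alpha_F$, which is positive since $\alpha_F<1$, we immediately obtain $|u(x)|\le C\,|x-x_\circ|^{1+\alpha_F}$ in a fixed neighborhood of $x_\circ$. The constant is uniform because the exponent and the universal constant $C_{1-\alpha_F}$ depend only on $F$.

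\emph{Main obstacle.} The delicate point is enforcing uniformity of the constants across \emph{all} free boundary points at once. The statement of Lemma~\ref{lem.second} is quantitative and universal, but the scale $\rho$ at which its hypotheses become verifiable (governed by the rate of convergence of $u_\rho$ to a blow-up at $x_\circ$, and thus by the expansion in Theorem~\ref{thm.expansion}) a priori depends on $x_\circ$. One resolves this via a standard compactness/contradiction argument: if the pointwise bound fails with a sequence of constants $C_k\to\infty$, one extracts a blow-up contradicting either the uniqueness of blow-ups at regular points (Corollary~\ref{cor.mainprop}) or the growth dichotomy at degenerate points. Combined with the interior $C^{2,\alpha}$ regularity, this yields the final estimate on $B_{1/2}^\pm$ depending only on $F$ and $\|u\|_{L^\infty(B_1)}$.
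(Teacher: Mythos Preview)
Your plan correctly identifies Lemma~\ref{lem.second} and interior $C^{2,\alpha}$ estimates as the building blocks, but the ``main obstacle'' you flag is in fact a genuine gap, and the resolution you propose does not work. A compactness argument at the \emph{critical} exponent $1+\alpha_F$ does not close: blowing up a sequence for which the bound fails produces a global limit $v_\infty$ with $\|v_\infty\|_{L^\infty(B_R)}\le CR^{1+\alpha_F}$, convex in $x'$, with $\|v_\infty\|_{L^\infty(B_1)}=1$. By the classification and uniqueness of blow-ups this forces $v_\infty$ to be (a multiple of) some $u_0^{(e)}$ with $\alpha_F(e)=\alpha_F$, which is perfectly consistent---there is no contradiction. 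This is exactly why the soft compactness argument in Proposition~\ref{prop.opt} only yields $C^{1,\alpha_F-\eps}$. Relatedly, your verification of hypothesis~(iv) via Theorem~\ref{thm.expansion} is non-uniform (the scale at which the expansion kicks in depends on the point), and your claim that the admissible scale~$\rho$ is ``universally bounded from below'' is unjustified and in general false near the interface between regular and degenerate points.

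The paper's proof bypasses all of this with a direct ``largest bad scale'' argument, and in particular neither splits into regular/degenerate nor invokes Theorem~\ref{thm.expansion}. After normalizing and rescaling once by a universal $r_0$ so that \eqref{eq.hyp2} holds at scale $r_0$, one lets $r_1$ be the supremum of radii $r<r_0$ where $\|u\|_{L^\infty(B_r)}>(r/r_0)^{1+\alpha_F}$. If this set is empty the growth bound holds; otherwise at $r_1$ one has equality, and for all $r\in(r_1,r_0)$ the bound holds. This is precisely what is needed to verify the growth hypothesis \eqref{eq.hyp2} for $u_{r_1}(x):=u(r_1 x)/\|u\|_{L^\infty(B_{r_1})}$, with $\|u_{r_1}\|_{L^\infty(B_1)}=1\ge\tfrac12$, so Lemma~\ref{lem.second} applies and gives $|u_{r_1}(x)|\le C|x|^{1+\alpha_F}$ on $B_1$, hence the bound on $B_{r_1}$. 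The point is that the growth hypothesis at the bad scale is \emph{automatically} supplied by the failure/success dichotomy at $r_1$, so no compactness is needed and all constants are universal. The regular/degenerate dichotomy is handled implicitly inside Lemma~\ref{lem.second}, whose proof shows that under $\|u\|_{L^\infty(B_1)}\ge\tfrac12$ the origin must in fact be regular.
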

\begin{proof}
Let us assume $\|u\|_{L^\infty(B_1)}= 1$. By the semiconvexity in the directions parallel to the thin space (see \cite[Proposition 2.2]{Fer16}) we have 
\[
D^2_{x'}u \ge -C{\rm Id}
\]
for some $C$ depending only on $F$. In particular, if $\delta_\circ$ is given by Lemma~\ref{lem.second}, there exists a rescaling $r_0\le \delta_\circ$ (depending only on $F$) such that the function $u(r_0 \,\cdot\,)$ satisfies \eqref{eq.thinobst_delta}-\eqref{eq.hyp2} with $\delta =\delta_\circ$.
We claim now that, for $C_0:= \max\{ C, r_0^{-1-\alpha_F}\}$, 
\begin{equation}
\label{eqn:optimalclaim}
|u(x) | \leq C_0 |x|^{1+\alpha_F} \qquad \mbox{ for all } \quad x\in B_{r_0}.
\end{equation}
To prove this, consider the set of $r<r_0$ such that the following inequality fails
\begin{equation}
\label{eqn:boh}
\| u \|_{L^\infty(B_r)} \leq \Big(\frac r {r_0} \Big)^{1+\alpha_F}.
\end{equation}
If this set is empty, \eqref{eqn:optimalclaim} holds. Otherwise, take $r_1$ its supremum and observe that
 $\| u \|_{L^\infty(B_{r_1})} = {r_1}^{1+\alpha_F} {r_0}^{-1-\alpha_F}.$ Hence we can apply Lemma~\ref{lem.second} to 
 $$u_{r_1}(x) := \frac{u(r_1x)}{\|u\|_{L^\infty(B_{r_1})}}=  \frac{u(r_1x){r_0}^{1+\alpha_F}}{{r_1}^{1+\alpha_F}}.
$$
We observe that $\| u_{r_1}\|_{L^\infty(B_1)} = 1$ and that the assumption \eqref{eq.hyp2}
 is satisfied, 
 since $u(r_0 \,\cdot\,)$ satisfies it and \eqref{eqn:boh} holds for $r_1<r<r_0$. By Lemma~\ref{lem.second} we deduce that $|u_{r_1}(x) | \leq C_0 |x|^{1+\alpha_F}$ for every $x\in B_1$ and hence \eqref{eqn:optimalclaim} holds.

Combining \eqref{eqn:optimalclaim}  with interior (and boundary) $C^{2,\alpha}$ estimates for the convex fully nonlinear operator $F$, we get the desired result.
\end{proof}

We can now give the proof of Theorem~\ref{thm.main_1homog}:
\begin{proof}[Proof of Theorem~\ref{thm.main_1homog}]
It is a consequence of Theorem~\ref{thm.expansion} and Theorem~\ref{thm.opt5}. The growth at regular points is a consequence of the homogeneity of blow-ups. 
\end{proof}
%

%
%
%
%
%
%

\section{Rotationally invariant operators}
\label{sec.6}
In this section we will prove Proposition~\ref{prop.pucci_res} first, and then use it to prove Theorem~\ref{thm.rot_inv}.

Let us consider the thin obstacle problem for a Pucci operator $\mathcal{P}^+_{\lambda, \Lambda}$, 
\begin{equation}
  \label{eq.thinobst_P}
  \left\{ \begin{array}{rcll}
  \mathcal{P}^+_{\lambda, \Lambda}(D^2u)&=&0& \textrm{ in } B_1\setminus \{x_n = 0, u=0\}\\
  \mathcal{P}^+_{\lambda, \Lambda}(D^2u)&\leq&0& \textrm{ in } B_1\\
  u&\geq&0& \textrm{ on } B_1\cap\{x_n=0\}. \\
  \end{array}\right.
\end{equation}

We want to classify blow-ups at regular points for this problem. In order to do that, we will use the results in \cite{Leo17}, where the author gives explicit solutions and homogeneities for positive (and negative) solutions in planar cones. 

More precisely, given a cone in $\R^2$ with aperture $\theta\in (0, \pi)$, $\mathcal{C}_\theta\subset\R^2$, the unique positive and negative homogeneous solutions, $u_\pm$, to the problem 
\[
  \left\{ \begin{array}{rcll}
  \mathcal{P}^+_{\lambda, \Lambda}(D^2u_\pm)&=&0& \textrm{ in } \mathcal{C}_{\theta}\\
  u_\pm &=&0& \textrm{ on } \partial\mathcal{C}_\theta\\
    \pm u_\pm &\ge&0& \textrm{ in } \mathcal{C}_\theta
  \end{array}\right.
\]
have homogeneity $1+\alpha_\pm \ge 1$ given by the implicit equations
\[
g(\alpha_-, \omega) = \frac{\theta}{2},\qquad h(\alpha_+, \omega) = \frac{\theta}{2},\qquad \omega = \frac{\Lambda}{\lambda}. 
\]
The functions $g = g(\alpha, \omega), h = h(\alpha,\omega) :[1,\infty)\times[1,\infty) \mapsto [0,\pi/2]$ are defined as 
\[
g(\alpha, \omega) = \arctan{\sqrt{w}} + \frac{1-\alpha}{\sqrt{(\alpha+1/\omega)(\alpha+\omega)}}\arctan{\sqrt{\frac{\omega(\alpha+1/\omega)}{\alpha+\omega}}}
\]
and
\[
h(\alpha, \omega) = \arctan\frac{1}{\sqrt{w}} + \frac{1-\alpha}{\sqrt{(\alpha+1/\omega)(\alpha+\omega)}}\arctan{\sqrt{\frac{\alpha+\omega}{\omega(\alpha+1/\omega)}}}.
\]
(See \cite[Section 2]{Leo17}.) Furthermore, for every fixed $\omega$ the functions $g(\cdot, \omega)$ and $g(\cdot, \omega)$ are decreasing in $\alpha$; and for every fixed $\alpha$, the functions $g(\alpha,\cdot)$, $-h(\alpha, \cdot)$  are increasing in $\omega$. 

With these definitions and the construction in Lemma~\ref{lem.existence} we can now give the proof of Proposition~\ref{prop.pucci_res}. 

\begin{proof}[Proof of Proposition~\ref{prop.pucci_res}]
By the construction in Lemma~\ref{lem.existence} (and using the definitions of $g$ and $h$ above, from \cite{Leo17}), our desired solution satisfies 
\begin{equation}
\label{eq.masterblowup}
2g(\alpha, \omega)+ h(\alpha, \omega) = \pi. 
\end{equation}
That is, 
\begin{align*}
\frac{\pi}{2} & = \arctan {\sqrt{w}} + \frac{1-\alpha}{\sqrt{(\alpha+1/\omega)(\alpha+\omega)}}\left[\frac{\pi}{2} + \arctan{\sqrt{\frac{\omega(\alpha+1/\omega)}{\alpha+\omega}}}\right] \\
& =  g(\alpha, \omega) + \frac{\pi(1-\alpha)}{2\sqrt{(\alpha+1/\omega)(\alpha+\omega)}}.
\end{align*}

In particular, for each $\omega \ge 1$ fixed, it is the sum of two strictly decreasing functions in $\alpha$, and so there is at most one solution (which we know exists for $\alpha\in (0, 1)$).

In order to simplify the previous expressions, let us define $x := \frac{1}{\sqrt{\omega}}\in [0,1]$.
Equation \eqref{eq.masterblowup} then translates into 
\[
F(x, \alpha) := \frac{(1-\alpha)x}{\sqrt{(\alpha+x^2)(\alpha x^2+1)}}\left[\frac{\pi}{2} + \arctan{\sqrt{\frac{\alpha+x^2}{\alpha x^2+1}}}\right]-\arctan{x} = 0.
\]
That is, for each $x\in [0, 1]$ we are looking for an $\alpha\in (0, 1)$ such that $F(x, \alpha) = 0$. As before, $F(x, \alpha)$ is strictly decreasing in $\alpha\in (0, 1)$ for each fixed $x\in [0, 1]$. Moreover, $F(\cdot, \alpha)$ is a smooth function with $F(0, \alpha) = 0$ and $F(1, \alpha) = \frac{\pi}{2}\frac{1-2\alpha}{1+\alpha}$.

Let us now define 
\[
G(x, \alpha ):= \frac{\sqrt{(\alpha+x^2)(\alpha x^2+1)}}{(1-\alpha)x} F(x, \alpha), 
\]
so that $G(0, \alpha ) = \frac{\pi}{2}+\arctan{\sqrt{\alpha}} - \frac{\sqrt{\alpha}}{1-\alpha}$ and $G(1, \alpha) = \frac{\pi}{2}\frac{1-2\alpha}{1-\alpha}$. Notice that $G(x, \alpha)$ is smooth for $(x, \alpha) \in [0, 1]\times[0,1)$. 

A direct computation yields
\begin{align*}
\frac{\partial}{\partial x} G(x, \alpha) = \frac{\alpha  (1+x^2) \left(-x + \arctan(x) (1-x^2)\right) }{(1-\alpha) x^2 \sqrt{(\alpha+x^2)(\alpha x^2+1)}  }.
\end{align*}
In particular, since $\arctan{x} < \frac{x}{1-x^2}$ for $x\in (0, 1)$, we have that 
\[
\frac{\partial}{\partial x} G(x, \alpha) < 0\quad\text{for}\quad (x, \alpha) \in (0, 1)\times(0, 1).
\]

On the other hand, we can also compute
\[
\frac{\partial}{\partial \alpha} G(x, \alpha) = \frac{(1-x^2)(1-\alpha)^2x - \arctan(x) (x^2+1)^2 (\alpha+1)^2}{2(\alpha+1)\sqrt{(\alpha+x^2)(\alpha x^2 +1)}(1-\alpha)^2 x}.
\]
That is, for $\alpha\in (0, 1)$ and $x\in (0,1)$,
\[
\frac{\partial}{\partial \alpha} G(x, \alpha) < 0 \quad\Longleftrightarrow \quad \frac{(1-\alpha)^2}{(1+\alpha)^2} < \frac{\arctan{(x)}}{x} \frac{(x^2+1)^2}{1-x^2} . 
\]
In particular, notice that
\[
\frac{(1-\alpha)^2}{(1+\alpha)^2} < 1 < (1+x^2)^2 \frac{\arctan{(x)}}{x-\frac{x^3}{3}} \frac{(1-\frac{x^2}{3})}{(1-x^2)}  = \frac{\arctan{(x)}}{x} \frac{(x^2+1)^2}{1-x^2}
\]
for all $(x, \alpha) \in (0, 1)\times(0, 1)$. Thus, $\frac{\partial}{\partial \alpha} G(x, \alpha) < 0 $ for $(x, \alpha) \in (0, 1)\times(0, 1)$ as well. 

Hence, $G(x, \alpha)$ is (strictly) decreasing in both $x$ and $\alpha$ (recall that $v$ is $\alpha$-homogeneous if and only $G(x, \alpha) = 0$, where $x = \sqrt{\lambda/\Lambda}$). 

We know that for each $x\in (0, 1]$ there is a unique $\alpha\in (0, 1)$ such that $G(x, a) = 0$. In particular, there exists some function $h = h(x):(0, 1)\mapsto (0, 1)$ such that $G(x, h(x)) = 0$. Using that $G$ is strictly decreasing in both variables (alternatively, by the implicit function theorem), we have that $h$ is strictly decreasing.

That is, if $\alpha = \alpha(\omega)$ denotes the homogeneity (1+$\alpha$) of $v$, then $\alpha$ is strictly increasing. In particular, since $\alpha(1) = \frac12$, we reach the desired result. 

Finally, when $x\downarrow 0$, $G(x, \alpha)$ vanishes when $\alpha$ approaches the unique solution to 
$G(0, \alpha ) = \frac{\pi}{2}+\arctan{\sqrt{\alpha}} - \frac{\sqrt{\alpha}}{1-\alpha} = 0$.
\end{proof}
%

Before proving Theorem~\ref{thm.rot_inv}, let us show the following two-dimensional lemma, stating that rotationally invariant, 1-homogeneous, and convex  operators are equivalent (for solutions to thin obstacle problems) to some Pucci $\mathcal{P}^+_{\lambda, \Lambda}$. 

\begin{lem}
\label{lem.puccilem}
Let $F:\mathcal{M}_2\to F$ be a fully nonlinear operator with ellipticity constants $0< \lambda\le \Lambda < \infty$ and such that it is rotationally invariant. Suppose, also, that $F$ is 1-homogeneous and convex. Then, there exists some $1\le \tilde\omega\le \frac{\Lambda}{\lambda}$ such that $\{F \le 0 \} = \{\mathcal{P}^+_{1, \tilde \omega}\le 0\}$.

In particular, if $u:\R^2 \to \R$ is a two-dimensional solution to 
\begin{equation}
\label{eq.statement}
  \left\{ \begin{array}{rcll}
  F(D^2u)&=&0& \textrm{ in } B_1\setminus \{x_2= 0, u=0\}\\
  F(D^2u)&\leq&0& \textrm{ in } B_1\\
  u&\geq&0& \textrm{ on } B_1\cap\{x_2=0\}. \\
  \end{array}\right.
 \end{equation}
  Then, there exists some $1\le \tilde \omega\le \frac{\Lambda}{\lambda}$ such that $u$ satisfies
 \[
   \left\{ \begin{array}{rcll}
  \mathcal{P}^+_{1, \tilde \omega}(D^2u)&=&0& \textrm{ in } B_1\setminus \{x_2 = 0, u=0\}\\
  \mathcal{P}^+_{1, \tilde \omega}(D^2u)&\leq&0& \textrm{ in } B_1,
  \end{array}\right.
 \] that is, $u$ solves a thin obstacle problem with the Pucci operator $\mathcal{P}^+_{1, \tilde \omega}$. 
\end{lem}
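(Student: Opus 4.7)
The plan is to reduce everything to the two-dimensional eigenvalue plane. Since $F$ is rotationally invariant on $\mathcal{M}_2^S$ and in particular invariant under the coordinate swap (which is realized by an element of $O(2)$), one can write $F(M)=f(\mu_1,\mu_2)$, where $\mu_1,\mu_2$ are the eigenvalues of $M$. Restricting $F$ to the two-dimensional subspace of diagonal matrices shows that $f:\R^2\to\R$ is convex and positively $1$-homogeneous with $f(0,0)=0$, and the full rotational invariance forces $f$ to be symmetric under the swap $(\mu_1,\mu_2)\leftrightarrow(\mu_2,\mu_1)$.

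Set $C:=\{(\mu_1,\mu_2)\in\R^2 : f(\mu_1,\mu_2)\le 0\}$. This is a closed convex cone, symmetric under the swap. Uniform ellipticity forces $C$ to contain the closed third quadrant (if $M\le 0$, then by \eqref{eq.unifellipt} $F(M)\le F(0)=0$) and to meet the open first quadrant only at the origin (if $M>0$, then $F(M)>F(0)=0$). Any closed convex cone in $\R^2$ satisfying these properties is necessarily an angular sector bounded by two half-lines through the origin; by swap-symmetry these two half-lines are mirror images across $\{\mu_1=\mu_2\}$, and the one lying in the closed fourth quadrant has the form $\{\mu_2=-\omega\mu_1:\mu_1\ge 0\}$ for a unique $\omega>0$. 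Direct comparison with the definition \eqref{eq.puccis} then gives $C=\{\mathcal{P}^+_{1,\omega}\le 0\}$.

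Next I verify the two-sided bound $1\le\omega\le\Lambda/\lambda$. For the upper bound, $F\le\mathcal{P}^+_{\lambda,\Lambda}$ pointwise, combined with the identity $\mathcal{P}^+_{\lambda,\Lambda}=\lambda\,\mathcal{P}^+_{1,\Lambda/\lambda}$ (immediate from \eqref{eq.puccis} and positive $1$-homogeneity), yields $\{\mathcal{P}^+_{1,\Lambda/\lambda}\le 0\}\subseteq C$, and hence $\omega\le\Lambda/\lambda$. For the lower bound, if $(\mu_1,\mu_2)\in C$, then by swap-symmetry also $(\mu_2,\mu_1)\in C$, and convexity of $f$ gives that the midpoint $(s,s)$, with $s:=(\mu_1+\mu_2)/2$, lies in $C$. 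By $1$-homogeneity, $f(s,s)=s\,F(\mathrm{Id})$, and ellipticity ensures $F(\mathrm{Id})>0$, so $s\le 0$, i.e.\ $\mu_1+\mu_2\le 0$. Hence $C\subseteq\{\mu_1+\mu_2\le 0\}=\{\mathcal{P}^+_{1,1}\le 0\}$, forcing $\omega\ge 1$. Setting $\tilde\omega:=\omega$ completes the proof of the first claim.

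The ``in particular'' statement then follows because both $F$ and $\mathcal{P}^+_{1,\tilde\omega}$ are continuous and strictly monotone in the direction of positive semidefinite matrices, so their zero sets coincide with the topological boundary of the common sublevel set $C$. In the viscosity sense, the condition $F(D^2u)\le 0$ at $x_0$ amounts to $D^2\varphi(x_0)\in\{F\le 0\}$ for every admissible $C^2$ test function $\varphi$ touching $u$ from below, and $F(D^2u)=0$ involves both touchings together with membership in $\{F=0\}$; since these sets coincide for $F$ and $\mathcal{P}^+_{1,\tilde\omega}$, the viscosity formulation of \eqref{eq.statement} transfers verbatim to $\mathcal{P}^+_{1,\tilde\omega}$. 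The main subtlety I expect is rigorously justifying that $\partial C$ consists of exactly two straight half-lines (with no curved portion), but this is a standard consequence of the classification of closed convex cones in $\R^2$ applied to one strictly between the third quadrant and its complement.
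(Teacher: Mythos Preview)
Your proof is correct and follows the same overall strategy as the paper's: reduce to the eigenvalue plane, observe that $\{f\le 0\}$ is a convex cone whose boundary in each half-plane is a single ray, identify this cone with $\{\mathcal{P}^+_{1,\tilde\omega}\le 0\}$, and then bound $\tilde\omega$ from above and below.

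The one genuinely different step is the lower bound $\tilde\omega\ge 1$. The paper proves the stronger operator inequality $c\,\mathrm{tr}(A)\le F(A)$ by taking a linear support $L(A)=\mathrm{tr}(\tilde L A)$ of the convex $F$ at the origin and then averaging over the rotation $O'=\begin{pmatrix}0&1\\-1&0\end{pmatrix}$ to force the support to be a multiple of the trace. Your argument is more direct and set-theoretic: swap-symmetry plus convexity of the sublevel cone places the midpoint $(s,s)$ of any $(\mu_1,\mu_2)\in C$ in $C$, and positivity of $F(\mathrm{Id})$ then forces $s\le 0$, i.e.\ $C\subset\{\mathrm{tr}\le 0\}$. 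This gives exactly what is needed here (the sublevel-set inclusion) with less machinery, while the paper's inequality is a slightly stronger statement that could be reused elsewhere. Either route is perfectly adequate for the lemma.
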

\begin{proof}
Since $F$ depends only on the eigenvalues (or alternatively, it is rotation invariant), we can write $F(D^2 u) = f(\lambda_1(u), \lambda_2(u))$, where $\lambda_1(u) \ge \lambda_2(u)$ denote the two eigenvalues of $D^2 u$. Since $F$ is elliptic, whenever $F(D^2 u) = 0$ we have that $\lambda_1(u)\lambda_2(u) \le 0$, so we can assume $f$ to be defined in $D := [0, \infty)\times(-\infty, 0]\subset \R^2$. 

The fact that $F$ is uniformly elliptic implies that $f$ is strictly increasing in both components $\lambda_1$ and $\lambda_2$; and since $F$ is 1-homogeneous, we have that $f$ is 1-homogeneous as well. In particular, the zero level set of $f$ in $D$ is a cone. From the fact that $f$ is strictly increasing in both coordinates, the zero level set of $f$  is either empty in $D$ or a line passing through the origin, namely, $\{f = 0\}\cap D = \{(\lambda_1, -\tilde \omega \lambda_1)\}$ for some $\tilde \omega \ge 0$. 

Now observe that, if it is non-empty, the zero (and sub-zero) level set for $f$ coincides with the zero (and sub-zero) level set of a Pucci operator $\mathcal{P}^+_{1, \tilde \omega}$ if $\tilde \omega\ge 1$ or $\mathcal{P}^-_{\tilde \omega, 1}$ if $\tilde \omega < 1$. Hence, if we show the right inequalities (and existence) for $\tilde \omega$ we will be done. 

We have
\begin{equation}
\label{eq.ineq.op}
c \,{\rm tr} (A) \le F(A) \le \mathcal{P}^+_{\lambda, \Lambda}(A)\quad\text{for all}\quad A\in \mathcal{M}_2^S
\end{equation}
for some $c > 0$. The second inequality holds by definition of Pucci operator, while the first inequality is a consequence of the fact that $F$ is rotation invariant and convex. Indeed, since $F$ is convex, there exists some linear operator $L(A) := {\rm tr}(\tilde L A)$ for some $\tilde L\in \mathcal{M}_2^S$ such that $L(A) \le F(A)$, and since it is rotation invariant
\[
{\rm tr}(\tilde L O^{-1}AO) = {\rm tr}(O \tilde L O^{-1}A) \le F(A) \quad\text{for all}\quad A\in \mathcal{M}_2^S,
\]
and for all rotations $O\in \mathcal{O}(2)$. In particular, it holds for $O' = \begin{pmatrix} 0 &1 \\ -1 &0\end{pmatrix}$, and thus we also have 
\[
\frac12 {\rm tr}(L){\rm tr} (A)=  \frac12 {\rm tr}((L+ O' L (O')^{T})A)\le F(A)
\]
for all $A\in \mathcal{M}_2^S$ and hence \eqref{eq.ineq.op} holds. 

Finally, from \eqref{eq.ineq.op} we immediately deduce that $\tilde\omega$ exists and $1 \le \tilde \omega \le \frac{\Lambda}{\lambda}$, as we wanted to see. 
\end{proof}

The following proposition will now directly give the proof of Theorem~\ref{thm.rot_inv}.

\begin{prop}
\label{prop.prop_rot}
Let $F$ be a rotationally invariant operator of the form \eqref{eq.F}. Then, either $\{F \le 0 \} = \{{\rm tr}\, (\cdot) \le 0\}$ or $\{F^* \le 0\}\subset \{\mathcal{P}^+_{1,\Lambda_F} \le 0\}$ for some $\Lambda_F > 1$.
\end{prop}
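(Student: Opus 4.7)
The plan is to analyze the subgradient set $K := \partial F^*(0) \subset \mathcal{M}_{n,\lambda,\Lambda}^S$ of the recession function, which inherits convexity, $1$-homogeneity, rotation invariance, and uniform ellipticity from $F$. Since $F^*$ is convex and $1$-homogeneous it admits the support function representation $F^*(A) = \sup_{B \in K} \mathrm{tr}(BA)$. Restricting to diagonal matrices gives a symmetric convex $1$-homogeneous function $f^*$ on $\R^n$ with $f^*(\mu) = \sup_{\beta \in K_{f^*}} \beta \cdot \mu$, where $K_{f^*} \subset [\lambda,\Lambda]^n$ is compact, convex, and permutation invariant (rotation invariance of $F^*$ descends to permutation invariance on eigenvalues). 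Averaging any element of $K$ over $O(n)$ forces $c_0 I \in K$ for some $c_0 \ge \lambda$, whence $F^* \ge c_0\,\mathrm{tr}$. The whole argument will hinge on whether $K_{f^*}$ contains any non-constant vector.

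If every $\beta \in K_{f^*}$ is a multiple of $\mathbf{1}$, then because the subgradient at zero of the supremum in \eqref{eq.sup} equals the closed convex hull of $\{L_\gamma\}$, each $L_\gamma$ lies in $K \subset \R I$ and hence $L_\gamma = c_\gamma' I$ with $c_\gamma' \in [\lambda,\Lambda]$. Therefore $F(A) = \sup_\gamma(c_\gamma' \mathrm{tr}(A) + c_\gamma)$ depends only on $\mathrm{tr}(A)$, and a short direct check using $c_\gamma' \ge \lambda > 0$ and $\sup_\gamma c_\gamma = 0$ gives $\{F \le 0\} = \{\mathrm{tr} \le 0\}$, producing the first alternative. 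Otherwise, fix a non-constant $\beta \in K_{f^*}$ and set
\[
\Lambda_F := \inf\bigg\{\frac{|\mu|_-}{|\mu|_+} : \mu \in \{f^* \le 0\},\ |\mu|_+ > 0\bigg\},\qquad |\mu|_+ := \sum_i (\mu_i)_+,\ |\mu|_- := -\sum_i (\mu_i)_-.
\]
The inclusion $\{F^* \le 0\} \subset \{\mathcal{P}^+_{1,\Lambda_F} \le 0\}$ then follows tautologically from this definition once written in eigenvalue coordinates, and $F^* \ge c_0\,\mathrm{tr}$ yields $\Lambda_F \ge 1$; everything reduces to proving the strict inequality $\Lambda_F > 1$.

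I would prove this by contradiction. If $\Lambda_F = 1$, a minimizing sequence normalized by $|\mu^{(k)}|_+ = 1$ is bounded, so a limit point $\mu^{(\infty)}$ satisfies $|\mu^{(\infty)}|_\pm = 1$, $\mathrm{tr}(\mu^{(\infty)}) = 0$, and, by sandwiching with $F^* \ge c_0\,\mathrm{tr}$, $f^*(\mu^{(\infty)}) = 0$. Combining the permutation invariance of $K_{f^*}$ with the rearrangement inequality yields
\[
0 \;=\; f^*(\mu^{(\infty)}) \;\ge\; \sup_{\sigma \in S_n}(\sigma\beta)\cdot\mu^{(\infty)} \;=\; \beta^\downarrow \cdot (\mu^{(\infty)})^\downarrow \;=\; \sum_{k=1}^{n-1}\big(\beta^\downarrow_k - \beta^\downarrow_{k+1}\big)\,S_k,
\]
with $v^\downarrow$ the decreasing rearrangement, $S_k := \sum_{j=1}^k (\mu^{(\infty)})^\downarrow_j$, and the last step Abel summation using $S_n = 0$. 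A short sorting argument based on $\mathrm{tr}(\mu^{(\infty)}) = 0$ and $|\mu^{(\infty)}|_+ > 0$ shows $S_k > 0$ for every $1 \le k \le n-1$, while $\beta$ being non-constant ensures at least one gap $\beta^\downarrow_k - \beta^\downarrow_{k+1}$ is strictly positive. The right-hand side is therefore strictly positive, contradicting $f^*(\mu^{(\infty)}) = 0$ and forcing $\Lambda_F > 1$. The main obstacle is not any single step but the realization that rotation invariance of $F^*$ descends, via the eigenvalue reduction, to permutation invariance of $K_{f^*}$, which is exactly what powers the rearrangement--Abel machinery into a strict lower bound for $f^*$ at the critical point $\mu^{(\infty)}$.
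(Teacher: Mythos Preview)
Your proof is correct and takes a genuinely different route from the paper's. The paper fixes a single point $p_0$ where $f^*$ is differentiable with a non-constant gradient $(L_1,\dots,L_n)$, sets explicitly $\Lambda_F = 1+\tfrac{1}{n}\big(\sqrt{L_n/L_1}-1\big)$, and then verifies the inclusion $\{F^*\le 0\}\subset\{\mathcal{P}^+_{1,\Lambda_F}\le 0\}$ by a direct two-case estimation according to whether $L_j\le\sqrt{L_1L_n}$ or not at the relevant index $j$. You instead define $\Lambda_F$ as the sharp constant (the infimum of $|\mu|_-/|\mu|_+$ over $\{f^*\le 0\}$), so that the inclusion is tautological, and then rule out $\Lambda_F=1$ by a clean compactness--rearrangement--Abel summation argument applied to a non-constant $\beta\in K_{f^*}$. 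Your route yields the optimal $\Lambda_F$ and packages the convexity and permutation invariance into a single inequality, while the paper's route is more constructive and avoids the limiting argument. One small presentational point: when you write ``the subgradient at zero of the supremum in \eqref{eq.sup}'' to justify $L_\gamma\in K$, note that $K=\partial F^*(0)$, not $\partial F(0)$; the reason $\overline{\mathrm{conv}}\{L_\gamma\}=K$ is that $F^*(A)=\sup_\gamma \mathrm{tr}(L_\gamma A)$, which follows from $tF(t^{-1}A)=\sup_\gamma(\mathrm{tr}(L_\gamma A)+tc_\gamma)\to \sup_\gamma \mathrm{tr}(L_\gamma A)$ as $t\downarrow 0$ (using $c_\gamma\le 0$). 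With that clarification the first alternative goes through as you wrote.
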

\begin{proof}
We first remark that, by convexity, $\{F\le 0\} = \{{\rm tr}\, (\cdot)\le 0\}$ if and only if $\{F^*\le 0\} = \{{\rm tr}\, (\cdot)\le 0\}$. Since $F^*$ is a convex Hessian equation, we can write 
\[
F^*(A) = f^*(\ell_1(A),\dots,\ell_n(A))\quad \text{for } A\in \mathcal{M}_n^S,
\]
where  $\ell_1(A) \le \dots\le \ell_n(A)$ denote the ordered eigenvalues of $A$, and $f^*:\R^n\to \R$ is invariant under permutations of the coordinates, convex and elliptic, in the sense that for a dimensional constant $C_n$, and for any $i \in \{1,\dots,n\}$, $\mu > 0$
\[
\frac{\lambda}{C_n}\mu \le f^*(\ell_1,\dots,\ell_i+\mu,\dots, \ell_n) -f^*(\ell_1,\dots,\ell_i,\dots,\ell_n)\le \Lambda C_n \mu.
\]
 (See \cite{CNS85, NV13} for more details.) 

%
If $F^*(A)$ is a function of the trace of $A$, then $\{F^* \le 0 \} = \{{\rm tr}\, (\cdot) \le 0\}$ by ellipticity and the fact that $F^*(0) = 0$, and the proposition is proved.
Let us assume otherwise that $f^*(\ell_1, ...,\ell_n)$ is not a convex function of $\ell_1+...+\ell_n$. Then, there exists some $p_0\in\R^n$ such that $\nabla f^* (p_0) = (L_1,...,L_n)$ exists and is not a multiple of the vector $(1,...,1)$.
By the rearrangement inequality and ellipticity we can assume, up to changing the point $p_0$ into one of its coordinate permutations,
\begin{equation}
\label{eq.gammaellipt}
 0< \frac{\lambda }{ C_n }\le L_1 \le \dots \le L_n\le \Lambda C_n.
\end{equation}

We now claim that, for $\Lambda_F = 1+\frac 1 n\Big(\sqrt{\frac{L_n}{L_1}}-1\Big) > 1$,
\[
\{ F^* \le 0 \}\subset \{\mathcal{P}^+_{1,\Lambda_F} \le 0\}.
\]
Indeed, let $A\in \mathcal{M}^S_n$ with eigenvalues $\ell_1\le ...\le\ell_n$ such that $F^*(A) \le 0$, $\ell_1 \le 0, \ell_n > 0$, and let
\[
j:= \sup \{j\in \{1,\dots,n\} : \ell_j \le 0\}.
\]
Since $f^*$ is $1$-homogeneous,  $ (L_1,...,L_n)$ belongs to its subdifferential at $0$. Hence
\begin{equation}
\label{eq.either1}
F^*(A) = f^* (\ell_1 , \dots, \ell_n) \ge \sum_{i= 1}^n L_i \ell_i \ge L_j \left( \sum_{i = 1}^{n} \ell_i + \frac{L_1-L_j}{L_j} \ell_1+ \frac{L_n-L_j }{L_j} \ell_n\right).
\end{equation}
Notice that the last two contributions are nonnegative. We bound $F^*(A) $ from below by $\mathcal{P}^+_{1,\Lambda_F} (A)$, up to a constant, by studying two cases separately. If $L_j \leq \sqrt{L_1L_n}$, 
neglecting the second to last contribution in \eqref{eq.either1}, we have
\begin{equation*}
\frac{F^*(A)}{L_j}  \ge \sum_{i = 1}^{n} \ell_i + \Big(\sqrt{\frac{L_n}{L_1}}-1\Big) \ell_n  \ge  \sum_{i = 1}^{j} \ell_i + \Big(1+\frac 1 n\Big(\sqrt{\frac{L_n}{L_1}}-1\Big) \Big)\sum_{i = j+1}^{n} \ell_i  = \mathcal{P}^+_{1,\Lambda_F} (A).
\end{equation*}
If $L_j \geq \sqrt{L_1L_n}$, 
neglecting the last contribution in \eqref{eq.either1}, we have
\begin{equation*}
\begin{split}
\frac{F^*(A)}{L_j}  &\ge \sum_{i = 1}^{n} \ell_i + \Big(\sqrt{\frac{L_1}{L_n}}-1\Big) \ell_1  \ge \Big(1+\frac 1 n\Big(\sqrt{\frac{L_1}{L_n}}-1\Big) \Big) \sum_{i = 1}^{j} \ell_i + \sum_{i = j+1}^{n} \ell_i  
\\&\ge \Big(1+\frac 1 n\Big(\sqrt{\frac{L_1}{L_n}}-1\Big) \Big)\mathcal{P}^+_{1,\Lambda_F} (A).
\end{split}\end{equation*}
In both cases we have proved that $ \mathcal{P}^+_{1,\Lambda_F} (A)\le 0$.
\end{proof}

And thanks to the previous proposition we can now give the proof of Theorem~\ref{thm.rot_inv}. 

\begin{proof}[Proof of Theorem~\ref{thm.rot_inv}]
If $\{F \le 0 \} = \{{\rm tr}\, (\cdot) \le 0\}$, we are done by the results for the Signorini problem, \cite{AC04}. Let us suppose then that $\{F \le 0 \} \neq \{{\rm tr}\, (\cdot) \le 0\}$. 

We compute ${\alpha_F}$ according to the definition \eqref{eq.alphan_opt},
 \[{\alpha_F} := \min_{e\in \mathbb{S}^{n-2}} \alpha_{F_{(e)}} \in (0, 1),\]
 where $F_{(e)}$ are the two-dimensional blow-downs in the direction $e$. Observe that, since $F$ is rotational invariant, blow-ups at regular points satisfy a 2-dimensional fully nonlinear thin obstacle problem with operator $\mathcal{P}^+_{1,\tilde \omega}$ by Lemma~\ref{lem.puccilem}. Moreover, arguing as in the proof of Lemma~\ref{lem.puccilem}, since we have $\{ F^* \le 0 \}\subset \{\mathcal{P}^+_{1,\Lambda_F} \le 0\}$ by Proposition~\ref{prop.prop_rot}, up to a dimensional constant we have that $\tilde \omega -1>  c_n(\Lambda_F-1)$. 
 
 Hence, by Proposition~\ref{prop.pucci_res}, $\alpha_{F_{(e)}} > \frac12$ for all $e\in \mathbb{S}^{n-1}\cap \{x_n = 0\}$, and ${\alpha_F} > \frac12$. From the regularity result in Proposition~\ref{prop.opt}, we are done. 
\end{proof}

\section{Non-homogeneous operators}
\label{sec.7}
In this section we deal with the more general case of non-homogeneous operators. In the first part, we show that in general we do not expect the $C^{1,\alpha_F}$ regularity to hold for a non-homogeneous operator. In the second part, we show that we can relax the 1-homogeneity condition to some algebraic control of the convergence of the operator to its blow-down (in analogy to the control given by H\"older coefficients for linear equations in non-divergence form). 

\subsection{A counterexample} Let us start with the proof of Theorem~\ref{thm.counterexample}, saying that in general solutions are not $C^{1,\alpha_F}$ by constructing such a solution for a particular $F$. 

\begin{proof}[Proof of Theorem~\ref{thm.counterexample}]
Let us construct such a solution.  We will do so in $\R^2$ and for some $F$ that will be, moreover, rotationally invariant. Here, $\alpha_F$ is given by \eqref{eq.alphan_opt}.

Let us consider, without loss of generality, the domain $Q_1 = [-1, 1]\times[-1, 1] \subset \R^2$, and the thin obstacle problem in this domain. In particular the thin space is given by $\{x_2 = 0\}$.

We will construct a sequence of solutions $u_i$ to a thin obstacle problem in $Q_1$ with operator $F_i$ of the form \eqref{eq.F}, all with the same boundary datum given by 
\[
u_i = \pm 1 \quad\text{on}\quad \{x_1 = \pm 1\},\qquad u_i = -1 \quad\text{on}\quad \{x_2 = \pm 1\},\quad\text{for all}\quad i\in \N.
\]

In particular, notice that $\partial_{x_1} u_i$ either satisfies an equation with bounded measurable coefficients, or vanishes. Moreover, on the boundary we have $\partial_{x_1} u_i\ge 0$, hence by the maximum principle $\partial_{x_1} u_i \ge 0$ everywhere. On the other hand, since $F_i$ will be rotationally invariant, $u_i$ is even with respect to $\{x_2 = 0\}$. Thus, if we let $Q^+ := [-1, 1]\times[0, 1]$, then $\partial_2 u_i \le 0$ on $\partial Q^+$ and again by maximum principle $\partial_2 u_i \le 0$ in $Q^+$ (and by symmetry, $\partial_2 u_i \ge 0$ in $Q^-$).

In particular, for each $u_i$ there exists some $z_i\in (-1, 1)$ such that $u_i = 0$ on $\{x_2 = 0, x_1 \le z_i\}$ and $u_i > 0$ on $\{x_2 = 0, x_1 > z_i\}$. That is, $(z_i, 0)$ is (the unique) free boundary point for $u_i$ (and from the monotonicity of the solution around it, it is a regular free boundary point). 

Let us now suppose $F_0(D^2 w) := \Delta w$. We will construct a sequence with $F_i$ having ellipticity constants $1$ and $2$, so that the corresponding $u_i$ will converge to some $u_\infty$ satisfying a fully nonlinear thin obstacle problem, with some operator $F_\infty$ with ellipticity constants $1$ and $2$ (all up to subsequences if necessary). 

Let $\Lambda_i = 2-\frac{1}{i+1}$, and let us define 
\[
F_i(A) = \max_{0 \le j \le i}\{ \mathcal{P}_{1,\Lambda_j}^+(A) - \tilde c_j\} = \max\{F_{i-1}(A), \mathcal{P}_{1,\Lambda_i}^+(A) - \tilde c_i\}
\]
for some sequence $\tilde c_i \ge 0$ to be chosen. In particular, 
\[
\|u_i\|_{C^{1,\alpha(\Lambda_i)}(B_{1/2}\cap \{x_2 = 0\})}\le C_i
\]
for some $C_i$, where we recall $\alpha(\Lambda_i)$ is the optimal homogeneity associated to $\mathcal{P}_{1,\Lambda_j}^+$ (the homogeneity of any blow-up). In particular, from Proposition~\ref{prop.pucci_res}, $\alpha_i := \alpha(\Lambda_i)$ is strictly increasing, converging to $\alpha_\infty:= \alpha(2)\in (0, 1)$ as $i \to \infty$. 

On the other hand, recall that the solution to the fully nonlinear thin obstacle problem can be defined to be the minimal supersolution above the thin obstacle. In particular, since any supersolution with operator $F_i$ above the thin obstacle, is also a supersolution with operator $F_{i-1}$ above the thin obstacle, this implies that $u_i \ge u_{i-1}$ for all $i\in \N$ and the sequence $u_i$ is pointwise non-decreasing. 

On the other hand, we claim (and prove later) that for $\eps_i = \frac{1}{2(i+1)}$, 
\begin{equation}
\label{eq.limsup}
\limsup_{r> 0} \frac{\|u_i\|_{L^\infty((z_i, z_i+r)\times\{0\})}}{r^{1+\alpha_i+\eps_i}} = +\infty. 
\end{equation}

Let us suppose that we have constructed $u_i$ already, let us show how to construct $u_{i+1}$. Observe that, once $F_i$ is fixed and hence $u_i$ is fixed, we only need to fix $\tilde c_{i+1}$ to determine $u_{i+1}$. Observe, also, that as $\tilde c_{i+1}\to \infty$, $u_{i+1}\downarrow u_i$ converges locally uniformly. Not only that, but also from interior estimates $z_{i+1}\uparrow z_i$. 

\begin{figure}
\includegraphics[scale = 1]{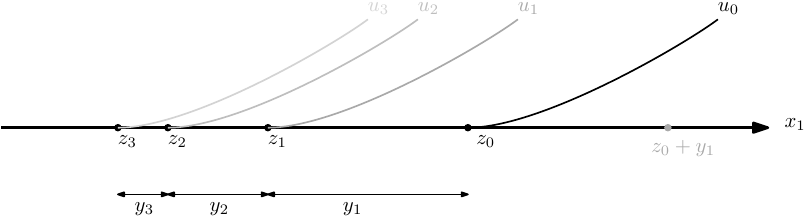}
\caption{The sequence of solutions to fully nonlinear thin obstacle problems, $u_i$, with free boundary points at $z_i$ on the thin space, $x_2 = 0$. }
\label{fig.1}
\end{figure}
Let us denote $y_{i+1}:= z_i - z_{i+1}$, so that $z_i + y_1+y_2+\dots+y_i = z_0 $ (see Figure~\ref{fig.1} for a sketch). Since $y_{i+1}\downarrow 0$ as $\tilde c_{i+1}\uparrow \infty$, in order to determine $\tilde c_{i+1}$ it is enough to say how small $y_{i+1}$ needs to be. 

Notice that from \eqref{eq.limsup}, there exists some sequence $r_j\downarrow 0$ such that 
\[
\frac{u_i(z_i+r_j,0)}{r_j^{1+\alpha_i+\eps_i}} \ge 1.
\]

Let us now fix $\bar r\in (r_j)_{j\in \N}$ small enough such that 
\[
r_j^{1+\alpha_i+\eps_i} \ge  i r_j^{1+\alpha_\infty},
\]
which always exists, since $\alpha_i +\eps_i < \alpha_\infty$. Let us also suppose that $\bar r < \frac{y_i}{2}$, and fix $y_{i+1} = \bar r$. In particular this determines $\tilde c_{i+1}$ and we have constructed $u_{i+1}$ as the solution to the fully nonlinear thin obstacle problem with operator $F_{i+1}$. 

Let us check that $u_\infty\notin C^{1,\alpha_\infty}$, thus giving our counterexample. To do so, let us denote by $(z_\infty, 0)$ the free boundary point of $u_\infty$ (in particular, $z_0 = z_\infty +y_1+y_2+\dots$), so that $z_i\downarrow z_\infty$ as $i\to \infty$. Notice now that
\[
u_\infty(z_i+y_{i+1}, 0) \ge u_i(z_i+y_{i+1}, 0) \ge y_{i+1}^{1+\alpha_i+\eps_i} \ge i y_{i+1}^{1+\alpha_\infty}. 
\]
On the other hand, $z_i - z_\infty = y_{i+1}+y_{i+2}+\dots$. From the recursive condition $y_{i+1}\le y_i/2$ we get $z_i - z_\infty\le 2y_{i+1}$, so that, since $u_\infty$ is monotone in the $\be_1$ direction, we have 
\[
\frac{u_\infty(z_i+y_{i+1}, 0) - u_\infty(z_\infty, 0)}{(z_i +y_{i+1} - z_\infty)^{1+\alpha_\infty}} \ge \frac{1}{3^{1+\alpha_\infty}} \frac{u_\infty(z_i+y_{i+1}, 0)}{y_{i+1}^{1+\alpha_\infty}} \ge \frac{1}{3^{1+\alpha_\infty}} i\to \infty 
\]
as $i\to \infty$. 
This, together with the fact that $\nabla u_\infty(z_\infty)(0) = 0$, yields that $u_\infty\notin C^{1,\alpha_\infty}$, as we wanted to see. 
\\[0.1cm]
{\it Proof of claim, \eqref{eq.limsup}:}  Let us finish by proving the claim \eqref{eq.limsup}. By translating if necessary, let us assume $z_i = 0$.

Recall that, by uniqueness of blow-ups (Proposition~\ref{prop.uniqueness} or Corollary~\ref{cor.mainprop}), we have that 
\[
\frac{u_i(rx)}{\|u_i\|_{L^\infty(B_r)}} \to \tilde u_i(x)
\]
as $r\downarrow 0$, locally uniformly in $B_1$, where $\tilde u_i(x)$ is the blow-up solution: it satisfies $\|u_i\|_{L^\infty(B_1)} = 1$ and it is $1+\alpha_i$ homogeneous. In particular, given for any $\delta > 0$ there exists some $r_0 > 0$ such that 
\[
\left\|\frac{u_i(rx)}{\|u_i\|_{L^\infty(B_r)}} - \tilde u_i(x)\right\|_{L^\infty(B_{1/2})} \le \delta\qquad\text{for all}\quad r < r_0. 
\]
By triangular inequality, and recalling that $\tilde u_i(x)$ is $1+\alpha_i$ homogeneous, we have that 
\[
\frac{\|u_i\|_{L^\infty(B_{r/2})}} {\|u_i\|_{L^\infty(B_r)}} \ge 2^{-1-\alpha_i} -\delta \ge 2^{-1-\alpha_i-\eps_i/2}
\]
if $\delta> 0$ is small enough depending only on $\alpha_i$ and $\eps_i$. 
Now, since $\|u_i\|_{L^\infty(B_1)} = 1$, and applying it iteratively, we have that 
\[
\|u_i\|_{L^\infty(B_{2^{-k} })}\ge 2^{-(1+\alpha_i+\eps_i/2)k}.
\]
In particular, \eqref{eq.limsup} holds. 
%
%
\end{proof}

\subsection{Optimal regularity for 
operators  with quantified convergence to the recession function}

Suppose, now, that the operator $F$ we are dealing with, satisfying \eqref{eq.F}, is not 1-homogeneous, but we have some control on its convergence towards the recession function $F^*$. In particular, let us define the monotone modulus of continuity $\omega_F:[0, 1]\to [0, C\Lambda]$ given by\footnote{This is equivalent to asking that, up to constants, $d(F_\tau, F^*) \le \omega_F(\tau)$ where $F_\tau := \tau F(\tau^{-1}\cdot)$ and $d(F, G) := \sup_{M\in \mathcal{M}_n^S} \frac{|F(M) - G(M)|}{1+\|M\|}$. }
\begin{equation}
\label{eq.conditionkappa0}
\omega_F(\tau) :=  \sup_{0\le \xi \le \tau}\sup_{\|A\|\le 1}  |\xi F(\xi^{-1} A) - F^*(A)|,
\end{equation}
so that, in particular, 
\[
|F(M) - F^*(M)| \le \|M\|\omega_F(\|M\|^{-1})
\]
for all $M\in \mathcal{M}_n^S$ with $\|M\|\ge 1$. Observe that $\omega_F$ is indeed a modulus of continuity, by the definition of $F^*$.

We are now interested in the case where there exists some power $\kappa > 0$ such that
\begin{equation}
\label{eq.conditionkappa}
\omega_F(\tau) \le  C_\kappa\tau^{\kappa}\quad\text{for all}\quad\tau\in (0, 1)
\end{equation}
for some $C_\kappa > 0$ fixed. 

We have the analogous results in this case. We start with the analogue to Proposition~\ref{prop.opt_reg}, where the proof follows exactly as before and we highlight the main differences: 

\begin{prop}
\label{prop.opt_reg_cc}
Proposition~\ref{prop.opt_reg} also holds when $F$ satisfies \eqref{eq.F} and \eqref{eq.conditionkappa0}-\eqref{eq.conditionkappa} for some $\kappa > 0$ and $C_\kappa > 0$. 
%
%
%
\end{prop}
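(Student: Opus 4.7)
The proof will follow the same scheme as Proposition~\ref{prop.opt_reg}, with modifications localized to the derivation of the equation satisfied by the blow-up sequence. We argue by contradiction, obtaining the same sequences $u_k$, $\nu_k$, $\Omega_k'$ with best-$L^2$-approximations $\phi_{k,r}(x) = L_k(r)x_n + Q_k(r)u_0^{(k)}(x)$, extracting subsequences $k_m$, $r_m$, $\theta_m \to \infty$, and defining
\[
v_m(x) := \frac{u_{k_m}(r_m x) - \phi_m(r_m x)}{\theta_m r_m^\Pi}, \qquad \Pi := 1+\alpha_\infty+2\sigma,
\]
with $\sigma > 0$ to be chosen small depending on $\kappa$ and $F$. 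The combinatorial estimates of Steps 2 and 3, including the growth control \eqref{eq.growthctrl}, the bound \eqref{eq.qbound}, and the orthogonality \eqref{eq.ort_cond}-\eqref{eq.contradict}, depend only on the definition of $\phi_m$ as an $L^2$ best fit and the growth assumptions on $u$, not on the homogeneity of $F$; they therefore carry over without change.

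The only structural change is the equation satisfied by $v_m$. The rescaling $\tilde u_m(x) := u_{k_m}(r_m x)/(\theta_m r_m^\Pi)$ solves $F_m(D^2 \tilde u_m)=0$ in the non-contact region, where $F_m(A) := \mu_m^{-1}F(\mu_m A)$ and $\mu_m := \theta_m r_m^{\Pi-2} \to \infty$ (since $\Pi<2$). The hypothesis \eqref{eq.conditionkappa0}-\eqref{eq.conditionkappa}, combined with the 1-homogeneity of $F^*$ and an $A = \|A\|(A/\|A\|)$ scaling, yields
\[
|F_m(A) - F^*(A)| \le C_\kappa (1+\|A\|)\, \mu_m^{-\kappa}\quad \text{for all } A\in \mathcal{M}^S_n.
\]
Writing $v_m - \beta u_0^{(k_m)} = \tilde u_m - (\tilde \phi_m + \beta u_0^{(k_m)})$ and using that $F^*$ annihilates $\tilde\phi_m + \beta u_0^{(k_m)}$ (which is a nonnegative multiple of $u_0^{(k_m)}$ plus an affine term, all in the null set of $F^*$), the ellipticity of $F_m$ gives the approximate Pucci inequalities
\[
\mathcal{P}^-_{\lambda,\Lambda}(D^2(v_m-\beta u_0^{(k_m)})) - \varepsilon_m(x) \le 0 \le \mathcal{P}^+_{\lambda,\Lambda}(D^2(v_m-\beta u_0^{(k_m)})) + \varepsilon_m(x),
\]
where $\varepsilon_m(x) = |(F_m-F^*)(D^2(\tilde\phi_m+\beta u_0^{(k_m)}))|$ is supported outside $U_m^+$.

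To control $\varepsilon_m$, we use \eqref{eq.qbound} to bound the rescaled coefficients of $\tilde\phi_m$: on $B_R$ one has $\|D^2\tilde\phi_m\| \le C r_m^{\alpha_{k_m}-\alpha_\infty-2\sigma} R^{\alpha_{k_m}-1}$. Combined with $\mu_m^{-\kappa} \le C\, r_m^{(2-\Pi)\kappa} = C r_m^{(1-\alpha_\infty-2\sigma)\kappa}$, we obtain
\[
\|\varepsilon_m\|_{L^\infty(B_R\setminus U_m^+)} \le C(R,\beta)\, r_m^{(1-\alpha_\infty-2\sigma)\kappa - 2\sigma + o(1)},
\]
which tends to $0$ on every fixed ball provided $\sigma < \kappa(1-\alpha_\infty)/(2(1+\kappa))$. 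With this choice of $\sigma$, the H\"older estimates of Step 4 (only $L^\infty$ bounds on the right-hand side of the Pucci inequality are used), the two-dimensional reduction of Step 5 (based on Proposition~\ref{prop.opt} and the slit-domain boundary Harnack of \cite{DS20, RT21}, both valid for non-homogeneous operators), and the $L^p$ control of the right-hand side in the divergence-form equation for $\partial_1 v_m$ in Step 6 all survive, each with an additional $o(1)$ contribution. In the limit $m \to \infty$, $v_\infty$ solves exactly the problem studied in Step 7, and the boundary Harnack argument of Theorem~\ref{thm.boundaryHarnack} together with the orthogonality \eqref{eq.ort_cond} produces the same contradiction.

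The main obstacle is the quantitative decay of $\varepsilon_m$: since the coefficient $\tilde Q_m$ can blow up like $r_m^{-2\sigma}$, one needs the algebraic rate $\omega_F(\tau) \le C_\kappa \tau^\kappa$ to beat this growth, which forces $\sigma$ to depend on $\kappa$. This is precisely why merely continuous convergence $\omega_F(\tau)\to 0$ is insufficient (matching the counterexample of Theorem~\ref{thm.counterexample}), and an algebraic rate is what one expects, in analogy with the H\"older coefficient requirement in the non-divergence Schauder theory.
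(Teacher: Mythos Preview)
Your overall strategy matches the paper's: redo the contradiction argument of Proposition~\ref{prop.opt_reg} and track the extra error coming from $F\neq F^*$. Steps 2, 3, 5 and the final Liouville step indeed carry over unchanged, and your identification of $\sigma$ as depending on $\kappa$ is correct in spirit.

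There is, however, a concrete gap in your treatment of the error term. You claim an $L^\infty$ bound
\[
\|\varepsilon_m\|_{L^\infty(B_R\setminus U_m^+)}\le C(R,\beta)\,r_m^{(1-\alpha_\infty-2\sigma)\kappa-2\sigma+o(1)},
\]
based on $\|D^2\tilde\phi_m\|\le C r_m^{\alpha_{k_m}-\alpha_\infty-2\sigma}R^{\alpha_{k_m}-1}$. This is false: $u_0^{(k_m)}$ is $(1+\alpha_{k_m})$-homogeneous with $\alpha_{k_m}\in(0,1)$, so $|D^2 u_0^{(k_m)}(x)|\sim |x|^{\alpha_{k_m}-1}$ is \emph{unbounded} near the origin. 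Consequently $\varepsilon_m$ blows up at $0$ and is not in $L^\infty(B_R\setminus U_m^+)$; your Step~4 argument, which you say ``only uses $L^\infty$ bounds on the right-hand side,'' therefore does not apply as written.

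The paper fixes this by keeping the singular factor explicit: one obtains
\[
|G_{m,\beta}(x)|\le C\,r_m^{\kappa(1-\alpha_{k_m})-\Pi+1+\alpha_{k_m}}\,|(x_1,x_n)|^{-(1-\alpha_{k_m})},
\]
which is only in $L^p$ for some $p>2$ on the two-dimensional slice (since $\alpha_{k_m}>0$). One then replaces the $L^\infty$ right-hand-side H\"older estimate by the Caffarelli--Cabr\'e estimate for non-divergence equations with $L^p$ source (see \cite[Proposition~4.10]{CC95}), combined with explicit barriers near the slit as in \cite[Lemma~7.2]{RS17}, to recover compactness in Step~4. The same $L^p$ contribution from $G_{m,0}$ must also be carried into the divergence-form equation of Step~6, where it is again harmless by Lemma~\ref{lem.weird_reg_est}. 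With these modifications your argument goes through; the key missing ingredient is precisely this passage from an (unavailable) $L^\infty$ control of the error to an $L^p$ control with $p>2$.
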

\begin{proof}
The proof follows along the lines of the proof of Proposition~\ref{prop.opt_reg}, and so we use the same notation as in  there; the dependence on $F$ will now also include a dependence on $\kappa$ and $C_\kappa$ from \eqref{eq.conditionkappa0}-\eqref{eq.conditionkappa}. The only difference is that the functions in the sequence $u_{k_m}(r_{m} x)$ satisfy a different equation outside of the thin space (still, they are solutions to a fully nonlinear thin obstacle problem), and consequently the expression \eqref{eq.equation} becomes, in this case, 
\begin{equation}
\label{eq.equationG}
\mathcal{P}_{\lambda, \Lambda}^- (D^2 (v_m - \beta u_0^{(k_m)}))  \le G_{m, \beta}(x) \le \mathcal{P}_{\lambda, \Lambda}^+ (D^2 (v_m - \beta u_0^{(k_m)}))  
\end{equation}
for some function $G_{m, \beta}(x)$. We remark that $u_0^{(k_m)}$ is a solution to the fully nonlinear thin obstacle problem with operator $F^*$, which is different from $F$. 

Let us find an expression and a bound for $G_{m,\beta}(x)$. We know that 
\[
\begin{split}
\mathcal{P}_{\lambda, \Lambda}^- (D^2 (v_m - \beta u_0^{(k_m)}))  \le&  F_m\left(\frac{r_m^2}{q_m} (D^2 u_{k_m})(r_m x)\right) \\
& - F_m\left((Q_{k_m}(r_m)q_m^{-1}+\beta)r_m^2 (D^2 u_{0}^{(k_m)})(r_m x)\right),
\end{split}
\]
where 
\[
F_m(A) := \frac{r_m^2}{q_m} F\left(\frac{q_m}{r_m^2} A\right),\qquad q_m := \|u_{k_m} - \phi_m\|_{L^\infty(B_{r_m})},
\]
so that, in particular, 
\[
\mathcal{P}_{\lambda, \Lambda}^- (D^2 (v_m - \beta u_0^{(k_m)}))  \le - \frac{r_m^2}{q_m} F\left((Q_{k_m}(r_m)+q_m \beta)(D^2 u_{0}^{(k_m)})(r_m x)\right).
\]
The analogous inequality holds for $\mathcal{P}_{\lambda, \Lambda}^+$. Thus, in \eqref{eq.equationG} we define 
\[
G_{m, \beta}(x) := - \frac{r_m^2}{q_m} F\left((Q_{k_m}(r_m)+q_m \beta)(D^2 u_{0}^{(k_m)})(r_m x)\right),
\]
and let us now find bounds for $G_{m,\beta}(x)$. Observe that, by homogeneity, and after a rotation assuming that $\nu_{k_m} = \be_1$,
\[
|D^2 u_0^{(k_m)}(r_m x)| = C r_m^{\alpha_{k_m}-1}|(x_1, x_n)|^{\alpha_{k_m} - 1}.
\]
Therefore, from \eqref{eq.conditionkappa}, using that $F^*(D^2 u_0^{(k_m)}) = 0$ and $F^*$ is 1-homogeneous, 
\[
|G_{m, \beta}(x)| \le \frac{C(Q_{k_m}(r_m)+q_m\beta)}{r_m^{\Pi - 1- \alpha_{k_m}}\theta_m|(x_1, x_n)|^{1-\alpha_{k_m}}}\omega_F\left(\frac{r_m^{1-\alpha_{k_m}} |(x_1, x_n)|^{1-\alpha_{k_m}}}{C(Q_{k_m}(r_m)+q_m\beta)}\right).
\]
Now, from \eqref{eq.qbound} and the monotony of $\omega_F$, we have 
\begin{equation}
\label{eq.boundG}
\begin{split}
|G_{m,\beta}(x) | & \le \frac{C(1+\beta r_m^{\Pi})}{r_m^{\Pi - 1- \alpha_{k_m}}|(x_1, x_n)|^{1-\alpha_{k_m}}} \omega_F(C r_m^{1-\alpha_{k_m}})\\
& \le C_\kappa C r_m^{\kappa (1-\alpha_{k_m})-\Pi +1+\alpha_{k_m}}\frac{1+\beta r_m^{\Pi}}{|(x_1, x_n)|^{1-\alpha_{k_m}}},
\end{split}
\end{equation}
where we are also using that $Q_{k_m}(r_m)\to  \infty$. We choose $\sigma$ small enough (depending on $F$ and $\kappa$) so that $\kappa (1-\alpha_{k_m})-\Pi +1+\alpha_{k_m} > 0$, and \eqref{eq.boundG} goes to zero as $m\to \infty$. In particular, observe that $G_{m,\beta}$ is locally bounded in the domain where it is defined in \eqref{eq.equationG}, so that the expression makes sense, and it belongs to $L^p_{\rm loc}$ for some $p > 2$. In particular, we have obtained an analogy to \eqref{eq.equation}. Combined with barriers from above and below (similarly to the ones constructed in \cite[Lemma 7.2]{RS17}) together with the interior H\"older regularity for equations in non-divergence form with $L^p$ right-hand side (for $p \ge n$, see \cite[Proposition 4.10]{CC95}), we obtain by analogy with Step 4 of Proposition~\ref{prop.opt_reg} the uniform convergence of the blow-up sequence. 

On the other hand, as in Step 5 of the proof of Proposition~\ref{prop.opt_reg}, $v_m$ converges to a two-dimensional functional (see \eqref{eq.unif_vm}).

Let us fix some $m\in \N$, and assume $\nu_{k_m} = \be_1$.  Notice that $v_m$ is locally $C^{2,\alpha}$ outside of $U_m^+ = \Omega_{k_m}'\cup\{x_n = 0, x_1 \le 0\}$, in particular, it satisfies an equation with bounded measurable coefficients  there:
\[
\sum_{i, j = 1}^n a^m_{ij}(x) \partial_{ij} v_m(x) = G_{m, 0}(x) \quad\text{in}\quad B_1\setminus U_m^+,
\]
where $A_m(x) = (a^m_{ij}(x))_{i,j = 1}^n \in \mathcal{M}_n^S$ is uniformly elliptic with ellipticity constants $\lambda$ and $\Lambda$, and $G_{m, 0}$ has bounds given by \eqref{eq.boundG}.

Since we have that for some $p > 2$
\[
G_{m,\beta}(x_1,0,\dots,0, x_n) \in L^p(B_1),\quad\text{with}\quad B_1\subset \R^2
\]
uniformly in $m$, arguing as in Step 6 of the proof of Proposition~\ref{prop.opt_reg} we get that $v_m$ (and $v_m - \beta u_0^{(k_m)}$) converge on the two-dimensional slice $(x_1, 0,\dots, 0, x_n)$. By also using the fact that $\|G_{m, \beta}(x_1,0,\dots,0, x_n)\|_{L^p(B_1)} \to 0$ as $m\to \infty$, for $B_1\subset \R^2$, we finish the proof as the one in Proposition~\ref{prop.opt_reg}.
%
%
\end{proof}

\begin{rem}
\label{rem.fin}
Once we have Proposition~\ref{prop.opt_reg_cc}, then Theorem~\ref{thm.expansion} also holds in this case. 

Furthermore, Lemma~\ref{lem.second} and Theorem~\ref{thm.opt5}  also hold for $F$ satisfying \eqref{eq.conditionkappa0}-\eqref{eq.conditionkappa}, using Proposition~\ref{prop.opt_reg_cc} instead of Proposition~\ref{prop.opt_reg}. Observe that Lemma~\ref{lem.first} is already stated for general operators, so the results follow in the same way. 
\end{rem}

We can now prove the analogue to Theorem~\ref{thm.main_1homog} in this case:

\begin{thm}
\label{thm.main_1homog_2}
Theorem~\ref{thm.main_1homog} also holds when $F$ satisfies \eqref{eq.F} and \eqref{eq.conditionkappa0}-\eqref{eq.conditionkappa} for some $\kappa > 0$ and $C_\kappa > 0$. 
%
%
%
%
\end{thm}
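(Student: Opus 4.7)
The plan is to mirror the proof of Theorem~\ref{thm.main_1homog}, replacing Proposition~\ref{prop.opt_reg} by its quantified counterpart Proposition~\ref{prop.opt_reg_cc} at every step, and tracking how the algebraic closeness \eqref{eq.conditionkappa} of $F$ to $F^*$ propagates through the rescaling argument. As emphasized in Remark~\ref{rem.fin}, Lemma~\ref{lem.first} is already stated for general operators satisfying \eqref{eq.F}, so only the analogs of Lemma~\ref{lem.second} and of Theorem~\ref{thm.opt5} require adaptation; the heart of the proof is to make these adaptations explicit.

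The expansion at regular free boundary points follows verbatim from the argument of Theorem~\ref{thm.expansion}: after subtracting a linear function (which produces the term $c_n x_n$), one reduces to the case $|\nabla u(0)|=0$, and the uniqueness of blow-ups from Corollary~\ref{cor.mainprop} gives a scale $r_0>0$, depending on $F$, $\kappa$, and $C_\kappa$, at which the rescaling of $u$ is close enough to $u_0^{(\nu)}$ and the contact set is sufficiently $C^{1,\alpha}$-flat for Proposition~\ref{prop.opt_reg_cc} to apply. This yields
\[
u(x) = c_n x_n + c_0\, u_0^{(\nu)}(x) + o\!\left(|x|^{1+\alpha_F(\nu)+\sigma}\right),
\]
with $c_0>0$ and $\sigma>0$ depending only on $F$, $\kappa$, $C_\kappa$.

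For the global regularity I would run the scheme Lemma~\ref{lem.first} $\Rightarrow$ Lemma~\ref{lem.second} $\Rightarrow$ Theorem~\ref{thm.opt5} in the non-homogeneous setting. The decisive step is the analog of Lemma~\ref{lem.second}. Given $u$ with $\|u\|_{L^\infty(B_1)}\ge \tfrac12$ satisfying \eqref{eq.hyp2}, the rescaling $u_r(x) := u(rx)/\|u\|_{L^\infty(B_r)}$ no longer solves a thin obstacle problem with $F$ itself but with
\[
F_r(A) := \tfrac{r^2}{\|u\|_{L^\infty(B_r)}}\, F\!\left(\tfrac{\|u\|_{L^\infty(B_r)}}{r^2}\, A\right),
\]
whose distance from $F^*$ is, by definition of $\omega_F$, controlled by $\omega_F\!\left(r^2/\|u\|_{L^\infty(B_r)}\right)$. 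At a regular point the non-degeneracy $\|u\|_{L^\infty(B_r)}\ge c r^{2-\beta_\circ}$ (see \cite{RS17}) gives $r^2/\|u\|_{L^\infty(B_r)}\le C r^{\beta_\circ}$, hence by \eqref{eq.conditionkappa}
\[
\omega_F\!\left(r^2/\|u\|_{L^\infty(B_r)}\right) \le C_\kappa (C r^{\beta_\circ})^{\kappa} \le C\, r^{\kappa\beta_\circ}.
\]
Thus $F_r$ falls below the threshold $\delta_\circ$ of Lemma~\ref{lem.first} once $r$ is smaller than a universal constant, and Proposition~\ref{prop.opt_reg_cc} takes the place of Proposition~\ref{prop.opt_reg}. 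The dyadic iteration of Theorem~\ref{thm.opt5} then closes unchanged, producing $|u(x)|\le C|x|^{1+\alpha_F}$ on a universal ball, which combined with interior and boundary $C^{2,\alpha}$ estimates for convex fully nonlinear operators yields the $C^{1,\alpha_F}$ bound on either side of the thin space. The growth rate at regular points is then read off from the expansion using the $(1+\alpha_F(e))$-homogeneity of $u_0^{(e)}$.

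The main obstacle is exactly to control the distance between $F_r$ and $F^*$ uniformly as $r\downarrow 0$, and this is the sole role of the hypothesis \eqref{eq.conditionkappa}: an algebraic rate is what guarantees that the rescaled operator remains in the $\delta_\circ$-neighborhood of $F^*$ across all scales needed to iterate Lemma~\ref{lem.first}. A merely continuous modulus $\omega_F$ would be insufficient, as witnessed by the counterexample of Theorem~\ref{thm.counterexample}, which is built precisely by letting $\omega_F$ decay arbitrarily slowly.
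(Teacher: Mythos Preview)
Your proposal is correct and follows essentially the same approach as the paper, which simply invokes Remark~\ref{rem.fin} to say that Lemma~\ref{lem.second} and Theorem~\ref{thm.opt5} continue to hold via Proposition~\ref{prop.opt_reg_cc}, while Lemma~\ref{lem.first} is already general. You have made explicit the one point the paper leaves implicit: that for the rescaled operator $F_r$ the closeness $|F_r(A)-F^*(A)|\le\delta_\circ(1+\|A\|)$ needed in Lemma~\ref{lem.first} is guaranteed by the non-degeneracy $\|u\|_{L^\infty(B_r)}\ge cr^{2-\beta_\circ}$ together with \eqref{eq.conditionkappa}, which is exactly the content your computation $\omega_F(r^2/\|u\|_{L^\infty(B_r)})\le C r^{\kappa\beta_\circ}$ captures.
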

\begin{proof}
The proof follows exactly the same as Theorem~\ref{thm.main_1homog}, thanks to the observation in Remark~\ref{rem.fin}.
\end{proof}

\section{Appendix: the boundary Harnack}
\label{sec.appendix}
The following is the boundary Harnack comparison principle but for functions that do not necessarily satisfy an equation with the same operator. 

We denote by $B_1^+ = B_1\cap \{x_1 \ge 0\}$, and we consider operators of the form 
\begin{equation}
\label{eq.op_div_form}
\mathcal{L} w (x) := {\rm div}(A(x) \nabla w(x)), \quad 0 < \lambda{\rm Id}\le A(x) \le \Lambda{\rm Id},
\end{equation}
that is, uniformly elliptic operators in divergence form with bounded measurable coefficients.

\begin{thm}[Boundary Harnack]
\label{thm.boundaryHarnack}
Let $0<\lambda_\circ\le \Lambda_\circ$ and let $u, v \in C(\overline{B_1^+})\cap H^1(\overline{B_1^+})$ such that $v\ge 0$ and they satisfy 
\begin{equation}
\left\{
\begin{array}{ll}
\mathcal{L}_{\alpha\beta} (\alpha u - \beta v)  =  0&\quad \text{in}\quad B_1^+\\
u = v = 0&\quad\text{on}\quad B_1\cap \{x_1 = 0\}\\
u(\frac12 \be_1)\le 1, v(\frac12 \be_1) \ge 1,
\end{array}
\right.
\end{equation}
in the weak sense, for any $\alpha, \beta\ge 0$, and for some elliptic operator in divergence form with bounded measurable coefficients, $\mathcal{L}_{\alpha\beta}$ as in \eqref{eq.op_div_form}, with ellipticity constants $\lambda_{\alpha\beta}$ and $\Lambda_{\alpha\beta}$ such that $0< \lambda_\circ \le \lambda_{\alpha\beta}\le \Lambda_{\alpha\beta}\le \Lambda_\circ< \infty$.

Then, 
\[
c_* u \le v \quad\text{in}\quad B_{1/2}^+
\]
for some constant $c_*$ depending only on $n$, $\lambda_\circ$, and $\Lambda_\circ$. 

In particular, if $u \ge 0$ and $u(\frac12\be_1) = v(\frac12\be_1) = 1$, 
\begin{equation}
\label{eq.boundboth}
c_* \le \frac{u}{v} \le \frac{1}{c_*}\quad\text{in}\quad B_{1/2}^+
\end{equation}
for some constant $c_*$ depending only on $n$, $\lambda_\circ$, and $\Lambda_\circ$. 
\end{thm}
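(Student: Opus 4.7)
The plan is a sliding maximum-principle argument. The essential first observation is that the hypothesis gives much more than $u$ and $v$ individually solving divergence-form elliptic equations: for every $c\ge 0$ the function $w_c := v - c u$ satisfies $\mathcal{L}_{c,1} w_c = 0$ in $B_1^+$ in the weak sense (by applying the hypothesis with $\alpha = c$, $\beta = 1$ and using linearity of $\mathcal{L}_{c,1}$), with ellipticity constants in $[\lambda_\circ,\Lambda_\circ]$ \emph{uniform in} $c$. Moreover $w_c = 0$ on $B_1 \cap \{x_1 = 0\}$ and $w_c(\tfrac12\be_1)\ge 1 - c$. In particular $v = w_0$ is a non-negative $\mathcal{L}_{0,1}$-harmonic function vanishing on the flat part of the boundary.

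I would then set $c_* := \sup\{c\ge 0 : w_c\ge 0 \text{ in } \overline{B_{1/2}^+}\}$; the inequality $c_* u \le v$ in $B_{1/2}^+$ is immediate from this definition, so it remains to produce a universal lower bound $c_* \ge c_0$ with $c_0 = c_0(n,\lambda_\circ,\Lambda_\circ)>0$. If $c_*\ge 1$ we are done, so assume $c_*<1$. By the very choice of $c_*$ and continuity, $w_{c_*}$ attains the value $0$ at some point $x_0\in\overline{B_{1/2}^+}$. The strong maximum principle for the uniformly elliptic divergence-form equation $\mathcal{L}_{c_*,1}w_{c_*}=0$ in the connected open set $B_1^+\cap\{x_1>0\}$, combined with $w_{c_*}(\tfrac12\be_1)\ge 1-c_* > 0$, rules out $x_0\in B_{1/2}^+\cap\{x_1>0\}$; hence $x_0\in\overline{B_{1/2}}\cap\{x_1=0\}$.

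The main step, and the principal difficulty, is promoting the strict interior positivity of $w_{c_*}$ into a quantitative lower bound of the form $w_{c_*}(x)\ge c_1(1-c_*)\,u(x)$ on $B_{1/2}^+$, with constants depending only on $n,\lambda_\circ,\Lambda_\circ$. Along a Harnack chain of overlapping balls joining $\tfrac12\be_1$ to the interior region $K:=\{x\in B_{3/4}:x_1\ge 1/16\}$, the interior Harnack inequality for $\mathcal{L}_{c_*,1}$ yields $w_{c_*}\ge c_H(1-c_*)$ on $K$. I would then propagate this bound up to the flat boundary by the Caffarelli--Fabes--Mortola--Salsa boundary comparison principle for divergence-form elliptic operators with bounded measurable coefficients: comparing $w_{c_*}$ against a common $\mathcal{L}_{c_*,1}$-adapted boundary barrier $\Psi$ in $B_{3/4}^+$ (the $\mathcal{L}_{c_*,1}$-harmonic function with boundary data $1$ on $\partial B_{3/4}\cap\{x_1>0\}$ and $0$ on $B_{3/4}\cap\{x_1=0\}$), one obtains $w_{c_*}(x)\ge c_1(1-c_*)\Psi(x)$ in $B_{1/2}^+$. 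The same comparison applied to $u$ on $\{u\ge 0\}$ (a non-negative $\mathcal{L}_{1,0}$-harmonic function vanishing on the flat boundary, with the CFMS constants depending only on the uniform ellipticity class $[\lambda_\circ,\Lambda_\circ]$) gives $u(x)\le C\Psi(x)$ there. Combining and using $w_{c_*}\ge 0\ge c u$ on $\{u<0\}$, one deduces $w_{c_*}\ge (c_1/C)(1-c_*)\,u$ throughout $B_{1/2}^+$, contradicting the maximality of $c_*$ unless $c_*\ge c_1/(c_1+C)=:c_0$.

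Finally, the two-sided estimate \eqref{eq.boundboth} is obtained by applying the first conclusion twice, with the roles of $u$ and $v$ interchanged; the hypothesis is symmetric under $(u,v,\alpha,\beta)\mapsto(v,u,\beta,\alpha)$, and with the normalizations $u(\tfrac12\be_1)=v(\tfrac12\be_1)=1$ and $u\ge 0$ the hypotheses hold also after the swap, yielding $c_*v\le u$ and hence $c_*\le u/v\le 1/c_*$ on $B_{1/2}^+$.
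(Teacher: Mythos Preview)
Your sliding argument has a genuine gap at the boundary-comparison step. You define $\Psi$ as the $\mathcal{L}_{c_*,1}$-harmonic function with the given boundary data, and correctly obtain $w_{c_*}\ge c_1(1-c_*)\Psi$ since $w_{c_*}$ and $\Psi$ solve the \emph{same} equation. But the next assertion, $u\le C\Psi$, does not follow: $u$ is $\mathcal{L}_{1,0}$-harmonic while $\Psi$ is $\mathcal{L}_{c_*,1}$-harmonic, and the CFMS boundary Harnack compares two non-negative solutions of a \emph{single} divergence-form operator. That its constants depend only on the ellipticity bounds does not permit different coefficient matrices. In fact, bounding $u$ above by a barrier built from another operator in the same ellipticity class is precisely a cross-operator boundary Harnack --- the very statement you are trying to prove --- so invoking it here is circular. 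Without this step, nothing prevents the touching point $x_0\in\{x_1=0\}$ with $c_*$ arbitrarily small, and the argument yields no universal lower bound. (There is also a secondary issue: restricting to $\{u\ge 0\}$ does not give a domain on which CFMS applies, since this set need not be an NTA or even a Lipschitz domain.)

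The paper's proof, following De Silva--Savin, avoids this circularity by never comparing solutions of different equations. It fixes explicit constants $M$ large and $\eps$ small and shows directly that $w:=Mv-\eps u\ge 0$ in $B_{1/2}^+$. The key is that, by hypothesis, $w$ itself solves one equation $\mathcal{L}_{\eps,M}w=0$, so a single-equation positivity lemma suffices: if such a $w$ vanishes on the flat boundary, is $\ge 1$ on $\{x_1\ge\delta\}$, and is $\ge -\delta$ everywhere, then $w\ge 0$ up to the boundary (Lemma~\ref{lem.delta}). The two hypotheses are arranged using only an $L^\infty$ bound on $u$ (Lemma~\ref{lem.boundC}, a Carleson-type estimate) and the interior Harnack on $v$. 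Your sliding setup can be salvaged along the same lines: instead of comparing $w_{c_*}$ and $u$ through a barrier, observe that $w_{c_*+\eta}=w_{c_*}-\eta u$ is itself $\mathcal{L}_{c_*+\eta,1}$-harmonic and apply the positivity lemma to it directly.
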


In order to show Theorem~\ref{thm.boundaryHarnack}, we will follow the boundary Harnack proof of De Silva and Savin \cite{DS20}, and more precisely, we will follow its adaptations in \cite[Appendix B]{FR21} and \cite{RT21}. Let us start by stating the two following lemmas.

In the first one, we have a global bound on the $L^\infty$ norm of the solution, up to the boundary. It corresponds to \cite[Lemma 3.7]{RT21} (see also \cite[Lemma B.6]{FR21}). 

\begin{lem}[\cite{RT21}]
\label{lem.boundC}
Let $u\in C(\overline{B_1^+})$ such that $\mathcal{L} u = 0$ in $B_1^+$ for an operator of the form \eqref{eq.op_div_form} with ellipticity constants $\lambda$ and $\Lambda$, and $u = 0$ on $\{x_1 = 0\}$. Assume, moreover, that $u(\frac12 \be_1) = 1$. Then 
\[
\|u \|_{L^\infty(B_{1/2})}\le C 
\]
for some $C$ depending only on $n$, $\lambda$, and $\Lambda$. 
\end{lem}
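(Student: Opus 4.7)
The lemma is a Carleson-type boundary $L^\infty$ estimate for $\mathcal{L}$-harmonic functions vanishing on a flat portion of the boundary, and is classical in the theory of divergence-form uniformly elliptic equations with bounded measurable coefficients (see e.g.\ Caffarelli--Fabes--Mortola--Salsa). A simple example (e.g.\ $u(x)=2x_1(1+\mu x_2)$ for the Laplacian in $\R^2$, with $\mu\to\infty$) shows that the estimate cannot hold for signed $u$ without further assumptions, so the statement is to be understood under the implicit hypothesis $u\ge 0$; this is the form in which the lemma is applied in the proof of Theorem~\ref{thm.boundaryHarnack}, with the signed case reducing to it by splitting into $u^\pm$. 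I therefore assume $u\ge 0$ throughout.

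The strategy combines two classical ingredients from De Giorgi--Nash--Moser theory: the interior Harnack inequality and the boundary H\"older regularity at a flat vanishing portion of the boundary. First, the interior Harnack inequality, chained along a finite number of overlapping balls contained in $B_{3/4}^+\cap\{x_1\ge 1/16\}$, yields a universal bound
\[
u(y)\le C_1\,u(\tfrac12\be_1)=C_1,\qquad y\in B_{3/4}^+\cap\{x_1\ge 1/16\},
\]
with $C_1$ depending only on $n$, $\lambda$ and $\Lambda$. Hence the remaining task is to control $u$ in the boundary layer $B_{1/2}^+\cap\{x_1<1/16\}$.

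For the boundary layer I would use the boundary H\"older regularity of $u$ at the flat vanishing portion: there exist universal $\alpha\in(0,1)$ and $C$ such that, for any $z\in B_{3/4}\cap\{x_1=0\}$ and any $x\in B_{1/8}(z)^+$,
\[
u(x)\le C\,(x_1)^{\alpha}\,\sup_{B_{1/4}(z)^+} u.
\]
Covering the boundary layer $B_{1/2}^+\cap\{x_1<\delta\}$ by such half-balls and combining with the Harnack-chain bound on the deep set $B_{1/2}^+\cap\{x_1\ge\delta\}$ produces a self-improving inequality of the form
\[
\sup_{B_{1/2}^+} u \;\le\; C\,\delta^{\alpha}\,\sup_{B_{3/4}^+} u \;+\; C_\delta,
\]
valid at every scale, with $C_\delta$ universal once $\delta$ is fixed. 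Choosing $\delta$ universal and small enough that $C\,\delta^{\alpha}\le\tfrac12$, one iterates the analogous inequality at nearby radii a bounded number of times to absorb the sup-on-the-bigger-ball term and to obtain the desired universal bound $\|u\|_{L^\infty(B_{1/2}^+)}\le C$.

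The main technical obstacle is to ensure that the additive constant $C_\delta$ in the self-improving inequality stays universal throughout the iteration, independently of the scale at which it is invoked. This is achieved by always running the Harnack chain connecting $\tfrac12\be_1$ to the deep set at the original scale, and exploiting the boundary H\"older estimate in its scale-invariant form inside the boundary layer; the chain length then depends only on the universally chosen $\delta$, and not on the number of iterations performed.
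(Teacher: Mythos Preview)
The paper does not give its own proof of this lemma; it simply cites \cite[Lemma~3.7]{RT21} and \cite[Lemma~B.6]{FR21}. Your preliminary observation that the hypothesis $u\ge 0$ is missing from the statement is correct and important: your counterexample $u(x)=2x_1(1+\mu x_2)$ indeed shows the estimate is false for signed solutions, and the lemma (together with its use in the proof of Theorem~\ref{thm.boundaryHarnack}) must be read under this implicit assumption.

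Your overall strategy---interior Harnack chain on the deep set $\{x_1\ge\delta\}$ combined with boundary H\"older decay in the strip $\{x_1<\delta\}$---is the classical one and is correct. The gap is in the closing iteration. From
\[
\sup_{B_t^+} u \;\le\; \tfrac12\,\sup_{B_{t+h}^+} u \;+\; C_\delta,
\]
with $\delta$ and $h$ fixed, a bounded number $N\lesssim 1/h$ of iterations yields only $\sup_{B_{1/2}^+}u \le 2^{-N}\sup_{B_{r_N}^+}u + 2C_\delta$ with $r_N<1$; the residual term $2^{-N}\sup_{B_{r_N}^+}u$ is \emph{not} universally bounded and cannot be ``absorbed'' in finitely many steps, since $\sup_{B_{r_N}^+}u$ is controlled only by the non-universal quantity $\|u\|_{L^\infty(B_1^+)}$. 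To close the argument you must let the step $s-t$ vary: choosing $\delta=\rho_0(s-t)$ (so that the boundary-layer coefficient stays $\le\tfrac12$) turns the Harnack-chain constant into $C_\delta\sim(s-t)^{-K}$, and one obtains
\[
\sup_{B_t^+}u \;\le\; \tfrac12\,\sup_{B_s^+}u \;+\; C(s-t)^{-K}\qquad\text{for all }\tfrac12\le t<s<1.
\]
The standard absorption lemma (Giaquinta) then gives a universal bound, using only the qualitative finiteness of $\sup_{B_1^+}u$ but with the conclusion independent of it. Alternatively one can run the classical Caffarelli--Fabes--Mortola--Salsa contradiction argument, iteratively producing points at which $u$ doubles while approaching the flat boundary, contradicting continuity of $u$. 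Either mechanism completes the proof; your sketch, as written, is missing one of them.
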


In the second one, we have that if the solution is very positive away from the boundary, and not too negative near the boundary, it is in fact positive up to the boundary. 

\begin{lem}
\label{lem.delta}
Let $\mathcal{L}$ be of the form \eqref{eq.op_div_form} with ellipticity constants $\lambda$ and $\Lambda$. Then, there exists a $\delta > 0$ depending only on $n$, $\lambda$, and $\Lambda$, such that if $w\in C(\overline{B_1^+})$ satisfies
\[
\left\{
\begin{array}{ll}
\mathcal{L} w  =  0&\quad \text{in}\quad B_1^+\\
w  = 0&\quad\text{on}\quad B_1\cap \{x_1 = 0\}
\end{array}
\right.
\quad\text{and}\quad 
\left\{
\begin{array}{ll}
 w  \ge  1 & \quad \text{in}\quad B_1^+ \cap\{x_1\ge \delta\}\\
w  \ge -\delta &\quad\text{in}\quad B_1^+
\end{array}
\right.,
\]
in the weak sense, then $w\ge 0$ in $B_{1/2}^+$. 
\end{lem}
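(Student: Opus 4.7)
The plan is a barrier argument combined with the boundary Harnack principle. Let $\phi$ be the weak solution of $\mathcal{L}\phi=0$ in $B_1^+$ with $\phi=0$ on $\{x_1=0\}\cap B_1$ and $\phi=1$ on $\partial B_1\cap\{x_1>0\}$, and let $\eta=\eta_\delta$ be the weak solution of the same equation with the same zero trace on $\{x_1=0\}\cap B_1$, but with lateral data equal to $1+\delta$ on the thin strip $\partial B_1\cap\{0<x_1<\delta\}$ and $0$ on $\partial B_1\cap\{x_1\ge\delta\}$. Both are nonnegative and bounded by $2$. Inspecting each piece of $\partial B_1^+$ using the three hypotheses on $w$ shows $w\ge\phi-\eta$ on $\partial B_1^+$: on $\{x_1=0\}\cap B_1$ all three functions vanish; on $\partial B_1\cap\{x_1\ge\delta\}$ one has $w\ge 1=\phi-\eta$; and on the thin strip $\phi-\eta=-\delta\le w$. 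Since $\mathcal{L}(w-\phi+\eta)=0$, the weak maximum principle gives $w\ge\phi-\eta$ throughout $B_1^+$.

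To upgrade this to $w\ge 0$ on $B_{1/2}^+$ it suffices to show $\eta\le\phi$ there for $\delta$ small. Fix the interior reference point $p_0=\tfrac12\be_1$. Standard Harnack chains applied to $\phi$ (using that $\phi=1$ on $\partial B_1\cap\{x_1>0\}$) give $\phi(p_0)\ge c_0$ for some $c_0=c_0(n,\lambda,\Lambda)>0$. The classical boundary Harnack principle for nonnegative solutions of divergence-form equations with bounded measurable coefficients that vanish on a flat piece of the boundary, applied to $\eta$ and $\phi$, then yields
\[
\frac{\eta(x)}{\phi(x)}\le C\,\frac{\eta(p_0)}{\phi(p_0)}\le \frac{C}{c_0}\,\eta(p_0)\qquad\text{for all } x\in B_{1/2}^+,
\]
with $C$ depending only on $n,\lambda,\Lambda$. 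Hence it is enough to make $\eta_\delta(p_0)$ universally small.

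The crucial step, and the main technical obstacle, is to prove that $\eta_\delta(p_0)\to 0$ as $\delta\to 0$, uniformly in $\mathcal{L}$ of the form \eqref{eq.op_div_form} with ellipticity in $[\lambda,\Lambda]$. The plan for this is a compactness argument: were it to fail, one would find $\delta_k\to 0$, operators $\mathcal{L}_k$, and $c_1>0$ with $\eta_{\delta_k}(p_0)\ge c_1$; uniform interior and boundary Hölder estimates of De Giorgi--Nash--Moser type would give equicontinuity of $\{\eta_{\delta_k}\}$ on compact subsets of $\overline{B_1^+}\setminus(\partial B_1\cap\{x_1>0\})$ and allow one, after passing to an $H$-limit $\mathcal{L}_\infty$ of the operators, to extract a subsequence converging locally uniformly to a bounded $\eta_\infty$ solving $\mathcal{L}_\infty\eta_\infty=0$ in $B_1^+$, vanishing on $\{x_1=0\}\cap B_1$, and with trace zero on $\partial B_1\cap\{x_1>0\}$ (the ``bad'' lateral data lives on a strip whose surface measure tends to $0$ while the data stays uniformly bounded); by uniqueness $\eta_\infty\equiv 0$, contradicting $\eta_\infty(p_0)\ge c_1$. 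Once this smallness is in hand, choosing $\delta$ so small that $(C/c_0)\,\eta_\delta(p_0)\le 1$ gives $\eta\le\phi$ on $B_{1/2}^+$, and therefore $w\ge\phi-\eta\ge 0$ there, proving the lemma.
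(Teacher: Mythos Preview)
The paper does not prove this lemma directly; it cites \cite[Proposition~3.10]{RT21} and \cite[Proposition~B.8]{FR21}, which follow the De~Silva--Savin method and establish the result by an elementary iterative (or barrier) argument using only the weak Harnack inequality. Your route is genuinely different: you invoke the classical boundary Harnack principle for divergence-form operators (a known result when both functions solve the \emph{same} equation, so not circular here since Theorem~\ref{thm.boundaryHarnack} concerns varying operators) to reduce the question to the smallness of $\eta_\delta(p_0)$ uniformly in $\mathcal{L}$, and then appeal to compactness. The comparison $w\ge\phi-\eta$ and the boundary Harnack reduction are correct.

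The compactness step, however, has a genuine gap. You argue that $\eta_\infty$ vanishes on $\{x_1=0\}\cap B_1$ and on $\partial B_1\cap\{x_1>0\}$, and conclude ``by uniqueness'' that $\eta_\infty\equiv 0$. But this leaves out the corner $S=\partial B_1\cap\{x_1=0\}$, an $(n-2)$-dimensional set, and the justification ``the bad data lives on a strip of small surface measure'' says nothing about the behaviour at $S$. The functions $\eta_{\delta_k}$ are \emph{not} uniformly bounded in $H^1(B_1^+)$ (the jump in the boundary data prevents this, already for the Laplacian), so one cannot directly invoke $H^1_0$-uniqueness for the limit. What is actually needed is that $S$ has zero $H^1$-capacity and hence is removable for bounded weak solutions; this is known but nontrivial and is really the crux of the argument, so it should be made explicit rather than hidden under ``by uniqueness''. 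In effect, your proof trades the elementary barrier construction of De~Silva--Savin for two black boxes (classical boundary Harnack and capacitary removability), making it less self-contained than the references the paper cites.
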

\begin{proof}
The proof is identical to the proof of \cite[Proposition 3.10]{RT21}, where the modulus of continuity of the operator is only used to deal with the right-hand side (which here is just zero). Alternatively, we refer the reader to the proof of \cite[Proposition B.8]{FR21}. 
\end{proof}

We can now give the proof of Theorem~\ref{thm.boundaryHarnack}.

\begin{proof}[Proof of Theorem~\ref{thm.boundaryHarnack}]
We proceed as in the proof of \cite[Theorem 1.1]{DS20} (see also the proof of \cite[Theorem B.1]{FR21}). 

By Lemma~\ref{lem.boundC} we can assume that  $u$ is bounded in $B_{3/4}^+$ by some constant $C$ depending only on $n$, $\lambda_\circ$ and $\Lambda_\circ$. We consider 
\[
w_{M\eps} = M v - \eps u,
\]
for some constants $M>0$ (large) and $\eps>0$ (small) to be chosen. Observe that, since $u$ is bounded and $v \ge 0$, $w_{M\eps} \ge -\eps C$ in $B_1^+$. In particular, given the $\delta$ from Lemma~\ref{lem.delta} with ellipticity constants $\lambda_\circ$ and $\Lambda_\circ$, we can choose $\eps$ small enough (depending only on $n$, $\lambda_\circ$, and $\Lambda_\circ$) such that 
\[
w_{M\eps} \ge -\delta\quad\text{in}\quad B_1^+. 
\]

On the other hand, by the interior Harnack inequality for operators in divergence form (\cite[Theorem 8.20]{GT83}) we can choose $M$ large enough (depending only on $n$, $\lambda_\circ$, and $\Lambda_\circ$) such that $M v \ge 1+\delta$ in $B_{3/4}\cap \{x_1\ge\delta\}$. 

Thus, 
\[
w_{M\eps} \ge 1\quad\text{in}\quad B_{3/4}\cap \{x_1\ge \delta\}. 
\]

Since $w_{M\eps} = 0$ on $\{x_1 = 0\}$ and $\mathcal{L}_{M\eps} w_{M\eps} = 0$ in $B_1^+$ by assumption, we can apply Lemma~\ref{lem.delta} to deduce that $w_{M\eps} \ge 0$ in $B_{1/2}^+$ (up to a covering argument and taking $\delta$ smaller if necessary). 

That is, we deduce 
\[
v \ge \frac{\eps}{M} u\quad\text{in}\quad B_{1/2}^+.
\]
Since $M$ and $\eps$ depend only on $n$, $\lambda_\circ$, and $\Lambda_\circ$ we deduce one inequality in the desired result. 

Finally, if $u \ge 0$ and $u(\frac12\be_1) = v(\frac12\be_1) = 1$, by exchanging the roles of $u$ and $v$ we get \eqref{eq.boundboth}. 
\end{proof}

\end{document}